\definecolor{darkblue}{rgb}{0.0,0,0.7} 
\newcommand{\darkblue}{\color{darkblue}} 
\definecolor{darkred}{rgb}{0.7,0,0} 
\newcommand{\defn}[1]{\emph{\darkblue #1}} 
\def\W{{\mathfrak{S}_{n+1}}}
\def\Pc{\mathcal{P}_c}
\def\NC{{\sf{NC}}}
\def\NN{{\sf{NN}}}
\def\Nest{{\sf{Nest}}}
\def\Pol{{\sf{Pol}}}
\def\Krew{{\sf{Krew}}}
\def\KrewNN{{\sf{Krew}}}
\def\J{{J}}
\def\bijD{{\mathcal{D}}}
\def\bijDD{{\mathbf{D}}}
\def\bijVV{{\mathbf{V}}}
\def\bijV{{\mathcal{V}}}
\def\Initial{{\sf{i}}}
\def\Final{{\sf{f}}}
\def\nns{{p}}
\def\ncs{{\pi}}
\def\hgt{{\sf{ht}}}
\def\sl{{\sf{diag}}}
\def\sll{{\sf{vert}}}
\def\ls{{\ell_{\mathcal{S}}}}
\def\T{{\mathcal{T}}}
\def\lt{{\ell_{\T}}}
\def\leqt{{\leq_{\T}}}
\def\leqs{{\leq_{\mathcal{S}}}}
\def\Vn{{\sf{V}}_n}
\def\Dyck{{{\sf{Dyck}}}}
\newtheorem{theorem}{Theorem}[section]
\newtheorem{proposition}[theorem]{Proposition}
\newtheorem{lemma}[theorem]{Lemma}
\newtheorem{corollary}[theorem]{Corollary}
\newtheorem*{notation}{Notation}
\theoremstyle{definition}
\newtheorem{definition}[theorem]{Definition}
\newtheorem{remark}[theorem]{Remark}
\newtheorem{example}[theorem]{Example}
\newcommand{\ts}{\textsuperscript}
\title{Noncrossing partitions and Bruhat order}
\author{Thomas Gobet}
\address{TU Kaiserslautern, Fachbereich Mathematik, Postfach 3049, 67653 Kaiserslautern, Germany.}
\email{gobet@mathematik.uni-kl.de} 
\author{Nathan Williams}
\address{LaCIM, Universit\'e du Qu\'ebec \`a Montr\'eal \\
201, Pr\'esident-Kennedy, 4\`eme \'etage \\
Montr\'eal (Qu\'ebec) H2X 3Y7, Canada.}
\email{nathan.f.williams@gmail.com}
\begin{document}

\maketitle

\begin{abstract}
We prove that the restriction of Bruhat order to noncrossing partitions in type $A_n$ for the Coxeter element $c=s_1s_2 \cdots s_n$ forms a distributive lattice isomorphic to the order ideals of the root poset ordered by inclusion.  Motivated by the change-of-basis from the graphical basis of the Temperley-Lieb algebra to the image of the simple elements of the dual braid monoid, we extend this bijection to other Coxeter elements using certain canonical factorizations.  In particular, we give new bijections---fixing the set of reflections---between noncrossing partitions associated to distinct Coxeter elements.
\end{abstract}

\tableofcontents

\section{Introduction}

Fix the Coxeter group of type $\mathfrak{S}_{n+1}$, its identity $e$, the linear Coxeter element $c=(1,2,\ldots,n+1)$, and its positive root poset $\Phi^+(\mathfrak{S}_{n+1})$.  Let $\NC(\W,c)$ be elements in the absolute order interval $[e,c]$---that is, the set of \defn{noncrossing partitions} with respect to $c$.  Let $\NN(\mathfrak{S}_{n+1})$ be the set of order ideals in $\Phi^+(\mathfrak{S}_{n+1})$, commonly known as the \defn{nonnesting partitions}.

Our main theorem is a relationship between these noncrossing and nonnesting partitions.

\begin{theorem}
\label{thm:thm1}
	The restriction of Bruhat order to $\NC(\W,c)$ is isomorphic to the distributive lattice given by ordering the elements of $\NN(\mathfrak{S}_{n+1})$ by inclusion.
\end{theorem}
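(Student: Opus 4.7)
I will construct an explicit bijection $\phi : \NC(\W,c) \to \NN(\mathfrak{S}_{n+1})$ realizing the asserted isomorphism between $(\NC(\W,c), \leq_{\mathrm{Bruhat}})$ and $(\NN(\mathfrak{S}_{n+1}), \subseteq)$. Both sides have cardinality $C_{n+1}$, so injectivity of $\phi$ suffices for bijectivity.

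For $1 \leq i < j \leq n+1$, let $w^{[i,j]} \in \W$ be the longest element of the parabolic subgroup $\mathfrak{S}_{\{i,i+1,\ldots,j\}}$, i.e.\ the involution reversing the interval $[i,j]$ pointwise. Each $w^{[i,j]}$ lies in $\NC(\W,c)$, as its cycle decomposition $(i,j)(i+1,j-1)\cdots$ together with singletons outside $[i,j]$ is noncrossing with respect to $c$. I define
\[ \phi(w) := \{\, e_i - e_j \in \Phi^+ : w^{[i,j]} \leq_{\mathrm{Bruhat}} w \,\}. \]
Whenever $[k,l] \subseteq [i,j]$ one has $w^{[k,l]} \leq_{\mathrm{Bruhat}} w^{[i,j]}$ (the smaller parabolic sits inside the larger, and a reduced word for $w^{[k,l]}$ extends to one for $w^{[i,j]}$), so $\phi(w)$ is an order ideal of $\Phi^+$. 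The implication $w \leq w' \Rightarrow \phi(w) \subseteq \phi(w')$ is immediate by transitivity.

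A spot-check in $\mathfrak{S}_4$ confirms the map and illustrates the subtlety: $\phi((1,4)) = \Phi^+ \setminus \{e_1 - e_4\}$, because $w^{[1,4]} = (1,4)(2,3)$ is not Bruhat-below $(1,4)$; whereas $\phi((1,4)(2,3)) = \Phi^+$. In particular the transposition $(1,4)$ is not join-irreducible in $(\NC(\W,c), \leq_{\mathrm{Bruhat}})$: the role of the join-irreducible associated to the highest root is played instead by the longer element $w^{[1,4]} = (1,4)(2,3)$.

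The hard step is injectivity together with order-reflection $\phi(w) \subseteq \phi(w') \Rightarrow w \leq w'$. I would prove these simultaneously by induction on $\ell(w)$: the inductive step consists of showing that each Bruhat cover $w \lessdot w'$ within $\NC(\W,c)$ matches a cover in $J(\Phi^+)$, namely the adjunction of a minimal element of $\Phi^+ \setminus \phi(w)$. Equivalently, one constructs the inverse $\NN \to \NC(\W,c)$ by sending an order ideal $I$ to the Bruhat-join inside $\NC(\W,c)$ of the family $\{w^{[i,j]} : e_i - e_j \in I\}$ (equivalently, the join of the parabolic longest elements indexed by the antichain of maximal elements of $I$). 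The main obstacle is thus the classification of Bruhat covers that stay inside $\NC(\W,c)$: a generic Bruhat cover in $\W$ is multiplication by a reflection, and one must track how this affects a noncrossing block structure and when the result remains noncrossing with respect to $c$. This is delicate precisely because of phenomena such as $(1,n+1)$, where the naive identification ``transposition $\leftrightarrow$ highest root of its support'' fails and must be replaced by the parabolic longest element $w^{[1,n+1]}$.
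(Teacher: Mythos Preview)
Your map $\phi$ is the correct inverse of the paper's bijection, and the identification of the parabolic longest elements $w^{[i,j]}$ as the join-irreducibles is exactly right (one can check that under the paper's map $\bijV$, the principal ideal below $e_i-e_j$ is sent to $w^{[i,j]}$). But your proposal stops precisely where the content lies. You yourself flag injectivity and order-reflection as ``the hard step,'' say you ``would prove these by induction,'' and name the classification of Bruhat covers inside $\NC(\W,c)$ as ``the main obstacle''---and then do not resolve it. That classification \emph{is} the theorem: the paper eventually states it (Lemma~\ref{lem:ancrage}), but derives it as a \emph{consequence} of Theorem~\ref{thm:caract}, so it is not available to you as an input. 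Without it, nothing in your argument rules out two incomparable noncrossing partitions having the same $\phi$-image, and nothing forces $|\phi(w)|=\ell_{\mathcal{S}}(w)$.

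The paper avoids the cover classification entirely by working in the other direction. To each order ideal $p$ it attaches an explicit $\mathcal{S}$-word via a labeling of $\Phi^+$ (Sections~\ref{sec:nn_vertical}--\ref{sec:nn_diagonal}), proves this word is reduced and represents a noncrossing partition, and observes that $p_x\subseteq p_y$ makes the word for $x$ a subword of the word for $y$, giving $x\leq y$ immediately. The reverse implication is dispatched in a few lines by the Ehresmann tableau criterion (Theorem~\ref{thm:brenti}): a short parity computation shows that if some coordinate of the vertical vector $v_x$ exceeds that of $v_y$, then some $x[i,j]>y[i,j]$. No induction on covers is needed; the covers are read off afterward. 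To make your approach self-contained you would need either an independent proof that the family $\{w^{[i,j]}\}$ separates Bruhat-incomparable noncrossing partitions, or a direct argument that $|\phi(w)|=\ell_{\mathcal{S}}(w)$---and neither is obvious without something like the paper's machinery.
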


We remark that a similar construction is given as Example $6.4$ in \cite{Armst}, though this uses a different order restricted to a different set of elements.

Our motivation for the study of Bruhat order on noncrossing partitions comes from Temperley-Lieb algebras.  The classical Temperley-Lieb algebras have---among others---the following two bases: the usual one, indexed by fully-commutative permutations of the symmetric group (the diagram basis), and a less-well-known one, indexed by noncrossing partitions.  The \defn{simple elements} of the Birman-Ko-Lee braid monoid are canonical lifts of noncrossing partitions to the braid group~\cite{BKL}.  The second basis, originally described by Zinno~\cite{Z} (see also \cite{Lee}), is given explicitly as the image of these simple elements in the Temperley-Lieb algebra.  It is natural to consider the change-of-basis matrix, which---for a certain ordering on the simple elements---was shown by Zinno to be upper triangular, with invertible coefficients on the diagonal.  Zinno's ordering was refined in~\cite{GobTh,GobBase} to coincide with linear extensions of the restriction of Bruhat order to $\NC(\W,c)$.

We prove Theorem~\ref{thm:thm1} over Sections~\ref{sec:nc_partitions},~\ref{sec:nn_partitions},~\ref{sec:bijections},~\ref{sec:vectors}, and~\ref{sec:bruhat}.  In Sections~\ref{sec:nc_partitions} and~\ref{sec:nn_partitions}, we recall the combinatorial and algebraic definitions of noncrossing and nonnesting partitions.  In Section~\ref{sec:bijections}, we present three equivalent bijections between noncrossing and nonnesting partitions, and prove their equivalence.  Two of these bijections are of a similar flavor as a product over certain labelings over the root poset; both end up defining (reduced) factorizations of a noncrossing partition into simple reflections.  We use these labelings to define in Section~\ref{sec:vectors} two different collections of vectors, both counted by the Catalan numbers.  These vectors are obtained by counting the number of occurrences of simple reflections in the two factorizations. We characterize the vectors in both cases. To our knowledge, one of these collections of vectors is a new Catalan object---it does not appear in~\cite{stanleyadd}.  In Section~\ref{sec:bruhat}, we prove Theorem~\ref{thm:thm1} in two ways by showing that the bijections from Section~\ref{sec:bijections} define a poset isomorphism.


Bessis' dual braid monoid is a generalization of the Birman-Ko-Lee braid monoid to arbitrary Coxeter elements and all finite Coxeter groups \cite{Dual}---the Birman-Ko-Lee monoid corresponds to the Coxeter group of type $A_n$ and $c=s_1s_2\cdots s_n$.  The dual braid monoid is a Garside monoid; as such, it has a set of simple elements. In Section~\ref{sec:extensions}, we consider the case of noncrossing partitions $\NC(\W,c')$ associated to an arbitrary Coxeter element $c'$; they can be lifted to the simple elements of the corresponding dual braid monoid.  While the lattice property of the Bruhat order restricted to $\NC(\W,c')$ fails in general, one can still map the simple elements of the dual braid monoid (isomorphic to the Birman-Ko-Lee braid monoid, but realized differently as a submonoid of the braid group) to the Temperley-Lieb algebra, and it turns out that images of the simple elements still yield a basis.  The restriction of Bruhat order to noncrossing partitions is not necessarily the ordering required to diagonalize the change-of-basis matrix for other Coxeter elements, and we adapt Theorem~\ref{thm:thm1} accordingly.  We succeed in generalizing the theorem using only combinatorial and Coxeter-theoretic notions, without any explicit reference to braid groups and monoids or Temperley-Lieb algebras.  The key is to define (not necessarily reduced) factorizations using new bijections \emph{not} obtained by conjugation---they preserve the reflection length but fix the set of reflections and also preserve the support of the elements---between noncrossing partitions associated to distinct Coxeter elements.  We believe that similar bijections with the same properties should exist in other types.  The poset obtained is again isomorphic to the lattice of order ideals in the root poset. These factorizations are used for the study of the above mentioned bases of Temperley-Lieb algebras in the first author's thesis~\cite{GobTh}, in particular the new ordering gives triangularity of the change of basis matrix mentioned above.  


We conclude with a brief discussion of a bijection between type $B$ noncrossing and nonnesting partitions.

\textbf{Acknowledgments}. The first author thanks Philippe Nadeau and the Institut Camille Jordan for an invitation in Lyon in February 2014 and Fr\'ed\'eic Chapoton for pointing out that the lattices for types $A_2$ and $A_3$ where the same as the lattices of Dyck paths ordered by inclusion, among other enriching discussions with both of them. Thanks also go to Fran\c{c}ois Digne for reading preliminary versions of parts of this paper. The second author thanks Drew Armstrong for showing him the construction given in Figure \ref{fig:krewcomp}.

\section{Noncrossing Partitions}
\label{sec:nc_partitions}
In this section, we introduce the noncrossing partitions as combinatorial objects, and as objects attached to a Coxeter group of type $A$.

\subsection{Combinatorial Noncrossing Partitions}
Let $[n+1]:=\{1,2,\ldots,n+1\}$.  A \defn{set partition} of $[n+1]$ is an unordered collection $\pi$ of nonempty subsets of $[n+1]$ with empty pairwise intersection whose union is all of $[n+1]$; these subsets are called \defn{blocks}.  Given a set partition $\pi$, a \defn{bump} is a pair $(i_1,i_2)$ with $i_1<i_2$ in the same block of $\pi$ such that there is no $j$ in that block with $i_1<j<i_2$.

\begin{definition}
\label{def:ncpartitionstypea}
A \defn{noncrossing partition} $\ncs$ of $[n+1]$ is a set partition with the condition that
		if $(i_1,i_2)$, $(j_1,j_2)$ are two distinct bumps in $\ncs$, then it is not the case that $i_1<j_1<i_2<j_2$.
\end{definition}

The \defn{graphical representation} of a noncrossing partition $\ncs$ is the set of convex hulls of the blocks of $\ncs$ when drawn around a regular $(n+1)$-gon with vertices labeled with $[n+1]$ in clockwise order.  The definition above is equivalent to non-intersection of the hulls.  Ordering the noncrossing partitions by refinement yields the \defn{noncrossing partition lattice}~\cite{kreweras1972partitions}.  This lattice is drawn for $n=4$ in Figure~\ref{fig:noncrossing41}.

We write $\Pol(\ncs)$ for the set of polygons occuring in the geometric representation of $\ncs$. A polygon $P\in\Pol(\ncs)$ is given by an ordered sequence of indices $P=[i_1 i_2 \cdots i_k],$ where $i_j$ are the index the vertices of $P$ and $i_1<i_2<\dots<i_k$.
 We say that $i_1$ is an \defn{initial} index for $P$ and $i_k$ a \defn{terminal} one.  For example, in Figure~\ref{figure:nonc} the polygons correspond to the sequences $P_1=[235]$ and $P_2=[16]$; the initial index for $P_1$ is $2$ and its final index is $5$, while the initial index for $P_2$ is $1$ and its final index is $6$.


\begin{figure}[htbp]
 \includegraphics[height=3in]{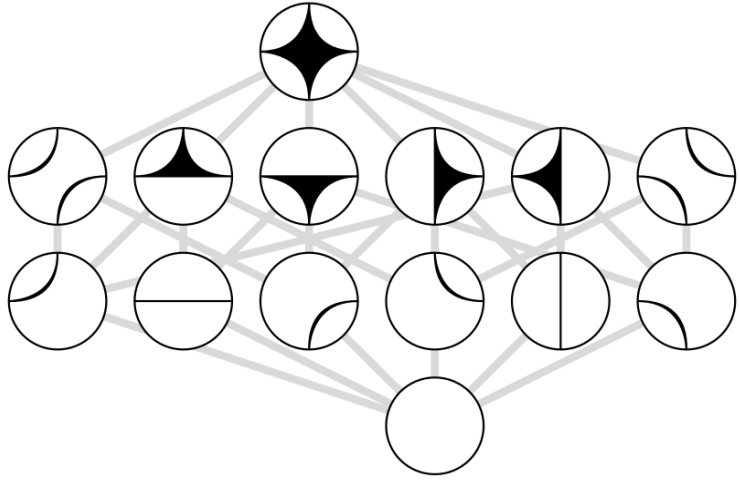}
\caption{The noncrossing partition lattice $\NC(\mathfrak{S}_{4},c)$.}
\label{fig:noncrossing41}
\end{figure}

\begin{figure}[htbp]
\begin{center}
\begin{tabular}{ccc}
& \begin{pspicture}(0,0)(6,3)
\pscircle(3,1.5){1.5}
\psdots(4.5,1.5)(3.75,2.79)(3.75,.21)(2.25,.21)(2.25,2.79)(1.5,1.5)
\pspolygon[linecolor=orange, fillstyle=solid, fillcolor=orange](3.75,2.79)(4.5,1.5)(2.25,.21)
\psline[linecolor=orange](2.25,2.79)(1.5,1.5)
\rput(3.75,.21){$\bullet$}
\rput(4.5,1.5){$\bullet$}
\rput(3.75,2.79){$\bullet$}
\rput(2.25,.21){$\bullet$}
\rput(2.25,2.79){$\bullet$}
\rput(1.5,1.5){$\bullet$}
\rput(4.05,2.92){\textrm{{\footnotesize \textbf{2}}}}
\rput(4.8,1.5){\textrm{{\footnotesize \textbf{3}}}}
\rput(4,.03){\textrm{{\footnotesize \textbf{4}}}}
\rput(2,.03){\textrm{{\footnotesize \textbf{5}}}}
\rput(1.17,1.5){\textrm{{\footnotesize \textbf{6}}}}
\rput(1.88,2.92){\textrm{{\footnotesize \textbf{1}}}}
\end{pspicture} & \\
\end{tabular}
\end{center}
\caption{The geometric representation of the noncrossing partition $\ncs=(1,6)(2,3,5) \in \NC(\mathfrak{S}_{6},c)$.}
\label{figure:nonc}
\end{figure}
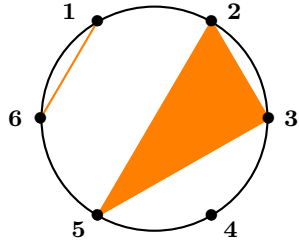

\subsection{Algebraic Noncrossing Partitions}
\label{sec:nc_algebraic}
Let $\mathfrak{S}_{n+1}$ be the symmetric group on $n+1$ letters, $\mathcal{S}$ the set of \defn{simple reflections} $\{s_i:=(i,i+1)\}_{i=1}^n$, and $\T:=\{w s w^{-1} : w \in \mathfrak{S}_{n+1}, s \in \mathcal{S} \}$ the set of all \defn{reflections}.  The \defn{length} of an element $w \in \W$ is the minimal number $\ls(w)$ of simple reflections required to write $w = s_{i_1}\cdots s_{i_{\ls(w)}}$. The \defn{support} of an element $w\in \W$, written $\mathrm{supp}(w)$, is the set $\{j\in[n+1]~|~w(j)\neq j\}$. 
 Length induces the weak order $\leqs$ on $\mathfrak{S}_{n+1}$, which is the partial order \[u\leqs v \text{ iff } \ls(u)+\ls(u^{-1} v)=\ls(v).\]  We write elements of the Coxeter group as $w$ and an $\mathcal{S}$-word for $w$ as $\mathbf{w}$.  Thus, if $w, w'\in\mathfrak{S}_{n+1}$, then their product is written $ww'$; if $\mathbf{w},\mathbf{w'}$ are $\mathcal{S}$-words, $\mathbf{ww'}$ is their concatenation.

Similarly, the \defn{reflection length} for an element $w \in \mathfrak{S}_{n+1}$ is the minimal number $\lt(w)$ of reflections required to write $w = t_1 \cdots t_{\lt(w)}$.  The reflection length induces the \defn{absolute order} $\leqt$ on $\mathfrak{S}_{n+1}$, which is the partial order \[ u\leqt v \text{ iff } \lt(u)+\lt(u^{-1} v)=\lt(v).\]  A (standard) \defn{Coxeter element} is a product $\prod_{i=1}^n s_{\pi(i)}$ for some permutation $\pi \in \mathfrak{S}_n$.  Until Section~\ref{sec:extensions}, we will work only with the Coxeter element $c:=s_1 s_2\cdots s_n \in \mathfrak{S}_{n+1}$ (so that $c$ is the long cycle $(1,2,\ldots,n+1)$).  We let \[\NC(\W,c):=[e,c]_{\T}\] be the interval of elements below $c$ in absolute order.

	It is well-known that mapping a permutation $\ncs \in \NC(\W,c)$ to the set partition whose blocks are given by the cycles of $\ncs$ is an isomorphism of posets between $\NC(\W,c)$ and the combinatorial noncrossing partition lattice.  To make this explicit, define a reduced $\mathcal{S}$-word for the transposition $(j, k)$ with $j<k$ by the word \[\mathbf{s_{[j, k]}}:=(s_{k-1} s_{k-2}\cdots s_{j+1}) s_j (s_{j+1}\cdots s_{k-2} s_{k-1}).\]  We call $\mathbf{s_{[j, k]}}$ a \defn{syllable}.
	
	Now order the set of polygons $P_1,\dots P_r\in\Pol(\ncs)$ in ascending order of the maximal integer in each polygon.  A polygon $P_i=[i_1 i_2 \cdots i_k]$ in the geometric representation of $\ncs \in \NC(\W,c)$ represents the cycle $\ncs_i =(i_1, i_2, \dots, i_k) \in \NC(\W,c)$. We define the $\mathcal{S}$-word $\mathbf{\ncs_i}$ for $\ncs_i$ by the concatenation \[\mathbf{\ncs_i}:=\mathbf{s_{[i_1,i_2]}s_{[i_2,i_3]}\cdots s_{[i_{k-1},i_k]}}.\]  Then $\ncs=\ncs_1 \ncs_2\cdots \ncs_r,$ and the concatenation \[\mathbf{\ncs}:=\mathbf{\ncs_1 \ncs_2 \cdots \ncs_r}\] is a reduced $\mathcal{S}$-word for $\ncs$, so that $\sum_{i=1}^n \ell_{\mathcal{S}}(\ncs_i)=\ell_{\mathcal{S}}(\ncs)$.  We write this reduced word $m_{\ncs}$ and call it the \defn{standard form} of $\ncs$.
\begin{example}
Consider the noncrossing partition $\ncs=(1,6)(2,3,5)$ from figure \ref{figure:nonc}. Then $$m_{\pi}=\mathbf{s_{[1,2]}}\mathbf{s_{[2,4]}}\mathbf{s_{[1,6]}}=(s_2 s_4s_3s_4)(s_5s_4s_3s_2s_1s_2s_3s_4s_5).$$ 
\end{example}


\begin{remark}
We chose to write the cycle $(i_1< \cdots< i_k)$ as the reduced $\mathcal{T}$-word \[(i_1,i_2)(i_2,i_3)\cdots (i_{k-1},i_k).\]  This decomposition is a special case of Theorem 3.5 in~\cite{ABW}---as in Example 3.3 of~\cite{ABW}, when the cycles of $x \in \NC(\W,c)$ are ordered correctly, this choice gives a factorization of $x$ into a product of reflections that occur in lexicographic order.
\end{remark}

We now define the Kreweras complement on noncrossing partitions, which will be used in Section~\ref{sec:nn_diagonal}, also introducing noncrossing matchings for use in Section~\ref{sec:dyck}.

\begin{definition}
Define $\Krew_c: \W \to \W$ by $w \mapsto w^{-1}c$. The restriction to the noncrossing partitions is a bijection, $\Krew_c: \NC(\W,c) \to \NC(\W,c)$, called the \defn{Kreweras complement}.
\end{definition}

The Kreweras complement $\Krew_c$ is an anti-isomorphism of the poset $\NC(\W,c)$. It may be visualized as rotation using noncrossing matchings---for this, it is probably most efficient to refer the reader to the graphical construction of $\Krew_c$ in Figure~\ref{fig:krewcomp} that ``thickens'' a noncrossing partition on $[n+1]$ to a \defn{noncrossing matching} on $[2(n+1)]$.

\begin{figure}[htbp]
$$\xymatrix{
\raisebox{-0.5\height}{\includegraphics[height=1.1in]{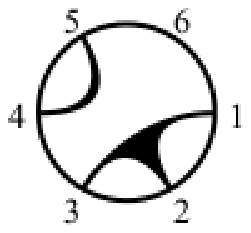}}	\ar@{->}[r]^{\Krew} \ar@{->}[d] \hspace{5pt} & \hspace{5pt} \raisebox{-0.5\height}{\includegraphics[height=1.1in]{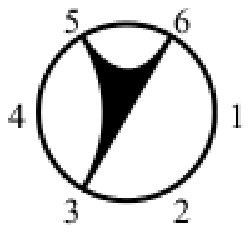}} \\
\raisebox{-0.5\height}{\includegraphics[height=1.1in]{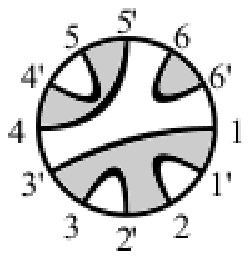}}	\ar@{->}[r]_{\text{Rotation}} \hspace{5pt} & \hspace{5pt} \raisebox{-0.5\height}{\includegraphics[height=1.1in]{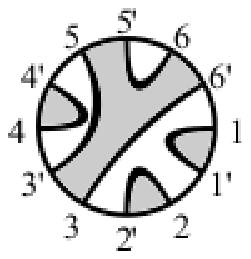}} \ar@{->}[u] \\
}$$
\caption[Interpreting Kreweras complementation as rotation.]{Interpreting Kreweras complementation as rotation by ``thickening'' a noncrossing partition to a noncrossing matching.}
\label{fig:krewcomp}
\end{figure}




\section{Nonnesting Partitions}
\label{sec:nn_partitions}
In this section, we define nonnesting partitions as purely combinatorial objects and as objects associated to root systems of type $A$.

\subsection{Combinatorial Nonnesting Partitions}
\label{sec:comb_nn}
\begin{definition}
\label{def:nnpartitionstypea}
A \defn{nonnesting partition} of $[n+1]$ is a set partition $\nns$ with the condition that
	if $(i_1,i_2)$, $(j_1,j_2)$ are two distinct bumps in $\nns$, then it is not the case that $i_1<j_1<j_2<i_2$. We write $\NN(\W)$ for the set of all nonnesting partitions.
\end{definition}

We can represent a partition $\mu$ graphically by placing $n$ labeled collinear points and then drawing an arc from $i_1$ to $i_2$ when $(i_1,i_2)$ is a bump in $\mu$.  We emphasize that the names \textit{noncrossing} and \textit{nonnesting} come from this linear model: the noncrossing partitions avoid pairs of arcs that \textit{cross}, and the nonnesting partitions avoid pairs of arcs that \textit{nest}.  For example, the noncrossing partition $14|23$ and the nonnesting partition $13|24$ are represented in Figure~\ref{fig:partition14231324}.  A larger example of a noncrossing partition is given in Figure~\ref{figure:diagramme}.

\begin{figure}[h!]
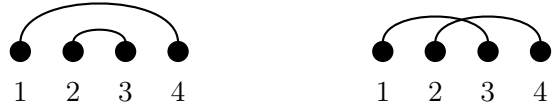

\begin{center}
\begin{psmatrix}[colsep=0.5,rowsep=0.2]
~ & ~ & ~ & ~ & ~ & 
~ & ~ & ~ & ~ & ~ & ~ & ~\\
~ & ~ & ~ & ~ & ~ & 
~ & ~ & ~ & ~ & ~ & ~ & ~\\
\pscircle*{0.14} & \pscircle*{0.14} & \pscircle*{0.14} & \pscircle*{0.14} & & & & & \pscircle*{0.14} & \pscircle*{0.14} & \pscircle*{0.14} & \pscircle*{0.14}\\
1 & 2 & 3 & 4 & & 
& & & 1 & 2 & 3 & 4\\
\end{psmatrix}
\ncarc[arcangle=90]{3,1}{3,4}
\ncarc[arcangle=90]{3,2}{3,3}
\ncarc[arcangle=90]{3,9}{3,11}
\ncarc[arcangle=90]{3,10}{3,12}
\end{center}
\caption[The noncrossing and nonnesting partitions $\ncs=14|23$ and $\nns=13|24$.]{The noncrossing and nonnesting partitions $\ncs=14|23$ and $\nns=13|24$.}
\label{fig:partition14231324}
\end{figure}

For noncrossing partitions, the only difference between the drawing above and the geometrical representation on a circle is that the longest edge of each polygon is not drawn.

\begin{figure}[h!]
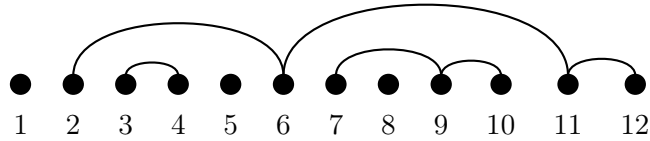

\begin{center}
\begin{psmatrix}[colsep=0.5,rowsep=0.2]
~ & ~ & ~ & ~ & ~ & 
~ & ~ & ~ & ~ & ~ & ~ & ~\\
~ & ~ & ~ & ~ & ~ & 
~ & ~ & ~ & ~ & ~ & ~ & ~\\
\pscircle*{0.14} & \pscircle*{0.14} & \pscircle*{0.14} & \pscircle*{0.14} & \pscircle*{0.14} & 
\pscircle*{0.14} & \pscircle*{0.14} & \pscircle*{0.14} & \pscircle*{0.14} & \pscircle*{0.14} & \pscircle*{0.14} & \pscircle*{0.14}\\
1 & 2 & 3 & 4 & 5 & 
6 & 7 & 8 & 9 & 10 & 11 & 12\\
\end{psmatrix}
\ncarc[arcangle=90]{3,2}{3,6}
\ncarc[arcangle=90]{3,3}{3,4}
\ncarc[arcangle=90]{3,6}{3,11}
\ncarc[arcangle=90]{3,7}{3,9}
\ncarc[arcangle=90]{3,9}{3,10}
\ncarc[arcangle=90]{3,11}{3,12}
\end{center}
\caption{A diagram for $n=11$ representing the noncrossing partition $x=(2,6,11,12)(3,4)(7,9,10)$.} 
\label{figure:diagramme}
\end{figure}


\subsection{Algebraic Nonnesting Partitions}
We write ~$\Phi^+$ for the set of \defn{positive roots} of $\W$, that is, \[\Phi^+:=\{e_i-e_j : 1\leq i < j \leq n+1\},\] where $\{e_i\}_{i=1}^{n+1}$ is a basis of $\mathbb{R}^{n+1}$.  The \defn{root poset} is the order on $\Phi^+$ defined by $\alpha \leq \beta$ if and only if $\beta-\alpha \in \mathbb{Z}\Phi^+$.  We abuse notation and also denote the root poset by $\Phi^+$.  We define the \defn{height} $\hgt(\alpha)$ of a positive root $\alpha$ to be its rank in the positive root poset.

  The combinatorial nonnesting partitions of Definition~\ref{def:nnpartitionstypea} are in bijection with antichains in the positive root poset of $\Phi^+$ by mapping a bump $(i,j)$ to the root $e_i-e_j$.  We may complete these antichains to order ideals by sending an antichain $\mathcal{A}$ of a poset $\mathcal{P}$ to the order ideal \[\{p \in \mathcal{P} \hspace{5pt} : \hspace{5pt} p \leq p' \text{ for some } p'\in \mathcal{A}\},\] so that nonnesting partitions naturally form a distributive lattice.

\section{Bijections}
\label{sec:bijections}

In this section, we present three bijections between noncrossing and nonnesting partitions, and then show that they are the same bijection.  The first bijection is purely combinatorial, while the second and third use two different labelings of the root poset to produce factorizations of a noncrossing partition.

\subsection{Dyck Paths}
\label{sec:dyck}
A \defn{Dyck path} is a path from $(0,0)$ to $(2n, 0)$ in $\mathbb{R}^2$ with steps of the form $+(1,1)$ or $+(1,-1)$ which stays above the $x$-axis.  If we plot a Dyck path on top of the positive root poset, it traces out an order ideal of the root poset, establishing a bijection between Dyck paths with $2n$ steps and nonnesting partitions of $\W$.  An example is given in Figure~\ref{fig:nn_eq_dyck}.  Due to the superficiality of this bijection, we will simply associate Dyck paths with $\NN(\W)$.  

\begin{figure}
\psscalebox{0.55}{\begin{pspicture}(-1,0)(20,4)

\pspolygon[linecolor=lightgray, fillstyle=solid, fillcolor=lightgray](-1,0)(2,3)(3,2)(5,4)(9,0)
\psdot(1,1)
\psdot(3,1)
\psdot(5,1)
\psdot(7,1)
\psdot(2,2)
\psdot(4,2)
\psdot(6,2)
\psdot(3,3)
\psdot(5,3)
\psdot(4,4)
\psline(1,1)(2,2)
\psline(2,2)(3,3)
\psline(3,3)(4,4)
\psline(4,4)(5,3)
\psline(5,3)(6,2)
\psline(6,2)(7,1)
\psline(2,2)(3,1)
\psline(3,1)(4,2)
\psline(4,2)(5,1)
\psline(5,1)(6,2)
\psline(3,3)(4,2)
\psline(4,2)(5,3)

\pspolygon[linecolor=lightgray, fillstyle=solid, fillcolor=lightgray](10,0)(13,3)(14,2)(16,4)(20,0)
\psdot(12,1)
\psdot(14,1)
\psdot(16,1)
\psdot(18,1)
\psdot(13,2)
\psdot(15,2)
\psdot(17,2)
\psdot(14,3)
\psdot(16,3)
\psdot(15,4)
\psline(12,1)(13,2)
\psline(13,2)(14,3)
\psline(14,3)(15,4)
\psline(15,4)(16,3)
\psline(16,3)(17,2)
\psline(17,2)(18,1)
\psline(13,2)(14,1)
\psline(14,1)(15,2)
\psline(15,2)(16,1)
\psline(16,1)(17,2)
\psline(14,3)(15,2)
\psline(15,2)(16,3)

\psline[linewidth=2.5pt](10,0)(11,1)(12,2)(13,3)(14,2)(15,3)(16,4)(17,3)(18,2)(19,1)(20,0)
\psdot[dotsize=8pt, dotstyle=o](10,0)
\psdot[dotsize=8pt, dotstyle=o](11,1)
\psdot[dotsize=8pt, dotstyle=o](12,2)
\psdot[dotsize=8pt, dotstyle=o](13,3)
\psdot[dotsize=8pt, dotstyle=o](14,2)
\psdot[dotsize=8pt, dotstyle=o](15,3)
\psdot[dotsize=8pt, dotstyle=o](16,4)
\psdot[dotsize=8pt, dotstyle=o](17,3)
\psdot[dotsize=8pt, dotstyle=o](18,2)
\psdot[dotsize=8pt, dotstyle=o](19,1)
\psdot[dotsize=8pt, dotstyle=o](20,0)

\end{pspicture}}
\caption{Nonnesting partitions are Dyck paths}
\label{fig:nn_eq_dyck}
\end{figure}
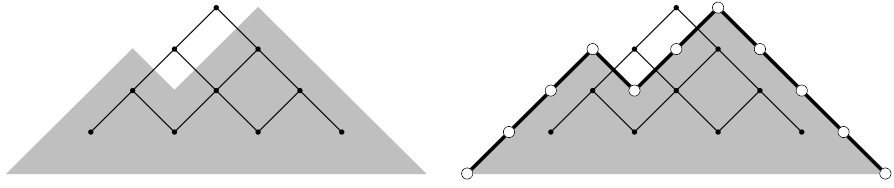

For a noncrossing partition $\ncs$ drawn using the linear representation of Section~\ref{sec:comb_nn}, we associate a Dyck path $\nns_\ncs$ by turning each point in $[n+1]$ into the $(2i-1)$\ts{st} and the $(2i)$\ts{th} steps of $\nns_\ncs$ in the following way:
\begin{itemize}
\item (Step I) If there are no arcs starting or ending at point $i$, then the $(2i-1)$\ts{st} step is up and the $(2i)$\ts{th} is down;
\item (Step II) If there are arcs starting and ending at point $i$, then the $(2i-1)$\ts{st} step is down and the $(2i)$\ts{th} is up;
\item (Step III) If there is a single arc starting at point $i$, then the $(2i-1)$\ts{st} and $(2i)$\ts{th} steps of the Dyck path are both up; and
\item (Step IV) If there is a single arc ending at point $i$, then the $(2i-1)$\ts{st} and $(2i)$\ts{th} steps are both down.
\end{itemize}

As an example, the path associated with the noncrossing partition from Figure~\ref{figure:diagramme} above is given in Figure~\ref{figure:dyck}.
\begin{figure}
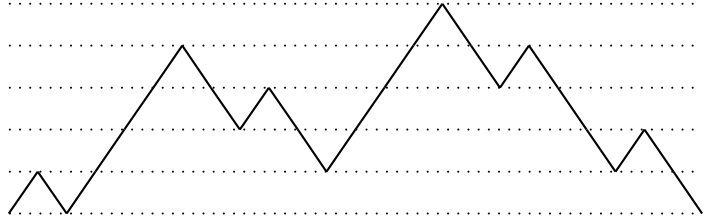

\begin{center}
\begin{psmatrix}[colsep=0.38,rowsep=0.1]

~ & ~ & ~ & ~ & ~ & ~ & ~ & ~ & ~ & ~ & ~ & ~ & ~ & ~ & ~ & ~ & ~ & ~ & ~ & ~ & ~ & ~ & ~ & ~ & ~ \\
~ & ~ & ~ & ~ & ~ & ~ & ~ & ~ & ~ & ~ & ~ & ~ & ~ & ~ & ~ & ~ & ~ & ~ & ~ & ~ & ~ & ~ & ~ & ~ & ~ \\ 
~ & ~ & ~ & ~ & ~ & ~ & ~ & ~ & ~ & ~ & ~ & ~ & ~ & ~ & ~ & ~ & ~ & ~ & ~ & ~ & ~ & ~ & ~ & ~ & ~ \\
~ & ~ & ~ & ~ & ~ & ~ & ~ & ~ & ~ & ~ & ~ & ~ & ~ & ~ & ~ & ~ & ~ & ~ & ~ & ~ & ~ & ~ & ~ & ~ & ~\\
~ & ~ & ~ & ~ & ~ & ~ & ~ & ~ & ~ & ~ & ~ & ~ & ~ & ~ & ~ & ~ & ~ & ~ & ~ & ~ & ~ & ~ & ~ & ~ & ~\\
~ & ~ & ~ & ~ & ~ & ~ & ~ & ~ & ~ & ~ & ~ & ~ & ~ & ~ & ~ & ~ & ~ & ~ & ~ & ~ & ~ & ~ & ~ & ~ & ~\\
~ & ~ & ~ & ~ & ~ & ~ & ~ & ~ & ~ & ~ & ~ & ~ & ~ & ~ & ~ & ~ & ~ & ~ & ~ & ~ & ~ & ~ & ~ & ~ & ~ \\
\end{psmatrix}
\ncline{7,1}{6,2}
\ncline{6,2}{7,3}
\ncline{7,3}{6,4}
\ncline{6,4}{5,5}
\ncline{5,5}{4,6}
\ncline{4,6}{3,7}
\ncline{3,7}{4,8}
\ncline{4,8}{5,9}
\ncline{5,9}{4,10}
\ncline{4,10}{5,11}
\ncline{5,11}{6,12}
\ncline{6,12}{5,13}
\ncline{5,13}{4,14}
\ncline{4,14}{3,15}
\ncline{3,15}{2,16}
\ncline{2,16}{3,17}
\ncline{3,17}{4,18}
\ncline{4,18}{3,19}
\ncline{3,19}{4,20}
\ncline{4,20}{5,21}
\ncline{5,21}{6,22}
\ncline{6,22}{5,23}
\ncline{5,23}{6,24}
\ncline{6,24}{7,25}
\ncline[linestyle = dotted]{7,1}{7,25}
\ncline[linestyle = dotted]{6,1}{6,25}
\ncline[linestyle = dotted]{5,1}{5,25}
\ncline[linestyle = dotted]{4,1}{4,25}
\ncline[linestyle = dotted]{3,1}{3,25}
\ncline[linestyle = dotted]{2,1}{2,25}
\end{center}
\caption{Dyck path for $x=(2,6,11,12)(3,4)(7,9,10)$.} 
\label{figure:dyck}
\end{figure}

\begin{proposition}
\label{prop:nc_dyck}
The assignment $\ncs \mapsto \nns_\ncs$ is a bijection from $\NC(\W,c)$ to $\NN(\W)$ such that $|\ls(\ncs)|=|\nns_\ncs|$.
\end{proposition}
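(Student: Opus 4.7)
The plan is to verify the well-definedness, bijectivity, and length identity in turn. For well-definedness, I would prove by induction on $i$ the key invariant that the height of $\nns_\ncs$ just after step $2i$ equals twice the number $o(i)$ of arcs $(a,b)$ of $\ncs$ satisfying $a \leq i < b$ (the arcs ``open across the gap from $i$ to $i+1$''). The four step rules update these quantities consistently: Types I and II preserve both the height and $o$; Type III adds $2$ to the height and opens a new arc; Type IV subtracts $2$ and closes an arc. Nonnegativity at even positions is then immediate, and at odd positions it follows because the first step at point $i$ is a down step only for Types II and IV, both of which require an arc to end at $i$, forcing the preceding even height to be at least $2$.

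For bijectivity, I would define an inverse map by reading off one of four types---UD, DU, UU, DD giving I, II, III, IV---from each consecutive pair of steps $(2i-1,2i)$ of a Dyck path, then building a noncrossing partition using a stack of open blocks: Type I adds a singleton; Type III pushes the new block $\{i\}$; Type II appends $i$ to the top block; Type IV appends $i$ and pops. Nonnegativity of the Dyck path is precisely what ensures the stack is nonempty whenever a pop or continuation is required, by the same invariant as above, and the LIFO order enforces the noncrossing condition automatically. That $\ncs \mapsto \nns_\ncs$ and this reconstruction are mutual inverses then follows step by step from the constructions.

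For the length identity, I would combine two calculations. On the algebraic side, the standard form $m_\ncs$ from Section~\ref{sec:nc_algebraic} is reduced, and each syllable $\mathbf{s_{[a,b]}}$ has length $2(b-a)-1$, so $\ls(\ncs) = \sum_{(a,b) \text{ a bump of } \ncs}(2(b-a)-1)$. On the combinatorial side, identifying $\NN(\W)$ with order ideals in $\Phi^+$ and placing the root $e_i - e_j$ at lattice position $(i+j-1,\,j-i)$, a root lies in the ideal of $\nns_\ncs$ iff its point sits strictly below $\nns_\ncs$. Writing $h(x)$ for the height of $\nns_\ncs$ at $x$, and noting that roots satisfy $x+y$ odd while $h(x)$ and $x$ share the same parity, the count of admissible $y$'s at position $x$ is $\lfloor h(x)/2\rfloor$, giving $|\nns_\ncs| = \tfrac{1}{2}\bigl(\sum_{x=0}^{2(n+1)} h(x) - (n+1)\bigr)$. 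Substituting $h(2i) = 2\,o(i)$ and $h(2i-1) = 2\,o(i-1) \pm 1$ (the sign determined by whether an arc ends at $i$) yields $\sum_x h(x) = 4\sum_{\text{bumps}}(b-a) + (n+1) - 2(\#\text{bumps of }\ncs)$, which collapses directly to $\sum_{\text{bumps}}(2(b-a)-1) = \ls(\ncs)$. I expect the main obstacle to be this final bookkeeping step: cleanly separating contributions at even and odd positions and verifying the $\pm 1$ sign at odd positions through the four-type analysis.
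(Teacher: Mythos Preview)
Your proposal is correct and takes a genuinely different route from the paper's proof.

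The paper's argument is almost entirely structural and deferred: it observes that the four-step rule is precisely the ``thickening'' of a noncrossing partition of $[n+1]$ to a noncrossing matching of $[2(n+1)]$ (Figure~\ref{fig:krewcomp}), and that noncrossing matchings are trivially the same as Dyck paths. Bijectivity is thus obtained by composing two known bijections rather than by constructing an explicit inverse. The length statement $|\ls(\ncs)|=|\nns_\ncs|$ is not proved here at all; it is deferred to Proposition~\ref{prop:nn_to_nc_vert} (where the vertical-labeling bijection $\bijV$ is shown to produce a reduced word of the right length) together with Theorem~\ref{thm:all_equivalent} (which identifies $\Dyck$ with $\bijV$).

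Your approach, by contrast, is entirely self-contained. The open-arcs invariant $h(2i)=2\,o(i)$ and the stack-based inverse are standard and correct. Your length computation also checks out: summing $\lfloor h(x)/2\rfloor$ over all $x$, splitting into even and odd positions, and using $\sum_i o(i)=\sum_{\text{bumps}}(b-a)$ does collapse exactly to $\sum_{\text{bumps}}(2(b-a)-1)=\ls(\ncs)$, matching the syllable lengths in the reduced standard form $m_\ncs$. The bookkeeping you flagged as the main obstacle is in fact routine once the parity of $h(x)$ and the sign at odd $x$ are pinned down, which you have done.

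What each approach buys: the paper's route is shorter on the page and ties the Dyck bijection into the broader machinery (Kreweras complement, the $\bijV$ and $\bijD$ labelings) that drives the rest of the paper, at the cost of forward references. Your route establishes the proposition independently, which is cleaner logically but does not connect to the labeling framework that the paper ultimately needs for Theorem~\ref{thm:thm1}.
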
 

\begin{proof}
It is immediate to pass between noncrossing matchings and Dyck paths.  The result follows from the bijection between noncrossing matchings and noncrossing partitions illustrated in Figure~\ref{fig:krewcomp}, and Dyck paths and nonnesting partitions illustrated in Figure~\ref{fig:nn_eq_dyck}.  The statement about length will follow from Proposition~\ref{prop:nn_to_nc_vert} and Theorem~\ref{thm:all_equivalent}.
\end{proof}
For $p\in\NN(\W)$, we write $\Dyck(p)$ for the corresponding element of $\NC(\W,c)$ under the inverse of the above bijection.   We remark that there are many similar bijections in the literature; see \cite{stanleyadd}. Notice that in \cite[Section 3]{ferr}, a different bijection between Dyck paths and noncrossing partitions is given.

\medskip

We introduce some additional notation.  For $\ncs \in\NC(\W,c)$ we define its \defn{initial set} $D_\ncs \subset\{1,\dots,n\}$ (resp. \defn{final set} $U_\ncs \subset\{2,\dots,n+1\}$) to be the set containing those integers $m$ for which there exists a polygon $P=[i_1 i_2 \cdots i_k]$ of $\ncs$ and an integer $j\neq k$ (resp. $j\neq 1$) such that $m=i_j$.  That is, $D_\ncs$ (resp. $U_\ncs$) contains all non-terminal (resp. non-initial) indices indexing the vertices of any polygon of $\ncs$.  We abuse notation and write $m\in P$ if there exists $1\leq j\leq k$ with $m=i_j$. It is clear that $|D_\ncs|=|U_\ncs|$.  In Figure \ref{figure:nonc}, $D_\ncs=\{1, 2, 3\}$ and $U_\ncs=\{3,5,6\}$.
  We now characterize the pairs $\{ (D_\ncs,U_\ncs) : \ncs \in \NC(\W,c)\}.$

\begin{proposition}\label{prop:terminalinitial}
 Let $0\leq k<n+1$ and let $\mathcal{I}_k$ be the set of pairs $(D, U)$ where $D=\{d_1, d_2,\dots, d_k\}$, $U=\{e_1, e_2,\dots, e_k\}$ such that
\begin{itemize}
	\item $e_i,d_i\in[n+1]$,
	\item $d_i<d_{i+1}$, $e_i<e_{i+1}$ for each $1\leq i<k$, and
	\item $d_i< e_i$ for each $1\leq i\leq k$.
\end{itemize}
Let $\mathcal{I}:=\bigcup_{k=0}^n \mathcal{I}_k$.  Then the map $\epsilon:\NC(\W,c)\rightarrow\mathcal{I}$, $\ncs\mapsto (D_\ncs,U_\ncs)$ is a bijection.
\end{proposition}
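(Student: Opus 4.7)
The plan is to construct an explicit inverse $\epsilon^{-1}:\mathcal{I}\to\NC(\W,c)$ via a stack-based parsing of the pair $(D,U)$, and then verify that $\epsilon$ and this inverse are mutually inverse bijections.

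\emph{Well-definedness of $\epsilon$.} First I check that $\epsilon(\ncs)\in\mathcal{I}$. The ordering conditions $d_i<d_{i+1}$ and $e_i<e_{i+1}$ are automatic from the indexing, and $|D_\ncs|=|U_\ncs|$ follows because each polygon $P=[i_1\cdots i_k]\in\Pol(\ncs)$ contributes $k-1$ vertices to each of $D_\ncs$ and $U_\ncs$, and distinct polygons are vertex-disjoint. The key inequality $d_i<e_i$ comes from considering the bumps $(i_j,i_{j+1})$ of $\ncs$: their starts are exactly $D_\ncs$ and their ends are exactly $U_\ncs$, and since $i_j<i_{j+1}$ for every bump, the number of bumps ending in $[1,m]$ is at most the number of bumps starting in $[1,m-1]$. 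Setting $m=e_i$ yields $|D_\ncs\cap[1,e_i-1]|\geq i$, whence $d_i\leq e_i-1<e_i$.

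\emph{Construction of the inverse.} Given $(D,U)\in\mathcal{I}$, I construct a partition $\ncs(D,U)$ by scanning $i$ from $1$ to $n+1$ while maintaining a stack: if $i\in D\setminus U$, push $i$; if $i\in U\setminus D$, pop the top $d$ and record a bump $(d,i)$; if $i\in D\cap U$, pop $d$, record $(d,i)$, and push $i$; otherwise $i$ forms a singleton block. The condition $d_i<e_i$ is equivalent to $|D\cap[1,m-1]|\geq|U\cap[1,m]|$ for all $m$, which is precisely what is needed for the stack to be nonempty whenever a pop is required. Any two bumps produced by such a stack procedure are either sequential (sharing an endpoint) or nested, so taking connected components of the bump graph yields a noncrossing set partition.

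\emph{Mutual inversion.} For the composition $\ncs\mapsto\epsilon(\ncs)\mapsto\ncs(\epsilon(\ncs))$: when the stack algorithm is applied to $(D_\ncs,U_\ncs)$, the noncrossing property of $\ncs$ guarantees that between pushing a vertex $i_{j-1}$ of a polygon $P$ and processing $i_j$, any intervening polygons lie entirely within $(i_{j-1},i_j)$ and restore the stack to its state with $i_{j-1}$ on top; hence the algorithm recovers precisely the bumps $(i_j,i_{j+1})$ of $\ncs$, and therefore $\ncs$ itself. For $(D,U)\mapsto\ncs(D,U)\mapsto\epsilon(\ncs(D,U))$: every $i\in D$ is pushed and so becomes a non-terminal vertex of some polygon, while every $i\in U$ is popped and so becomes a non-initial vertex, yielding $D_{\ncs(D,U)}=D$ and $U_{\ncs(D,U)}=U$. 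The main subtlety is the handling of middle vertices $i\in D\cap U$, where the algorithm must simultaneously close one bump and open another; the noncrossing/stack structure is exactly what makes this consistent with the polygon structure of $\ncs$.
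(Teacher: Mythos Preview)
Your proof is correct. The paper takes a slightly different route: rather than building an explicit inverse by stack-parsing, it factors through Dyck paths, using the already-established bijection $\NC(\W,c)\leftrightarrow\text{Dyck paths}$ (Proposition~\ref{prop:nc_dyck}) and then giving a direct bijection $\mathcal{I}\leftrightarrow\text{Dyck paths}$ via the same four-case rule (Steps I--IV). The two arguments are really the same idea in different clothing: your stack height after processing $i$ is half the Dyck path height at position $2i$, and your four cases (push, pop, pop-then-push, nothing) correspond exactly to the paper's four step-pair types (up-up, down-down, down-up, up-down). What your version buys is self-containment---you never need Proposition~\ref{prop:nc_dyck} or noncrossing matchings---while the paper's version buys economy by reusing machinery already in place. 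One small imprecision: where you write that two bumps from the stack are ``either sequential (sharing an endpoint) or nested,'' you should also allow them to be disjoint and non-nested; the point, which you use correctly, is simply that they never \emph{cross}.
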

\noindent The objects defined in Proposition~\ref{prop:terminalinitial} are item $(l^6)$ in \cite{stanleyadd}.
\begin{proof}
Given $(D,U) \in \mathcal{I}$, we construct a Dyck path as a sequence of pairs of steps.

\begin{itemize}
\item (Step I) If $i \not \in U$ and $i \not \in D$, then the $(2i-1)$\ts{st} step is up and the $(2i)$\ts{th} is down;
\item (Step II) If $i \in U$ and $i \in D$, then the $(2i-1)$\ts{st} step is down and the $(2i)$\ts{th} is up;
\item (Step III) If $i \in D$ and $i \not \in U$, then the $(2i-1)$\ts{st} and $(2i)$\ts{th} steps of the Dyck path are both up; and
\item (Step IV) If $i \in U$ and $i \not \in D$, then the $(2i-1)$\ts{st} and $(2i+1)$\ts{th} steps are both down.
\end{itemize}

This is clearly reversible, giving a bijection between Dyck paths and $\mathcal{I}$.  Using Proposition~\ref{prop:nc_dyck}, we conclude the result.
\end{proof}

\begin{remark}
The bijection $\NC(\W,c) \overset{\sim}{\rightarrow} \mathcal{I}$ given above can be used to relate $\NC(\W,c)$ to the \defn{fully commutative} elements of $\mathfrak{S}_{n+1}$---that is, those elements for which any two $\mathcal{S}$-reduced expressions are equal up to commutations (these are also known as the $321$-avoiding permutations, see~\cite{stem}).  Each fully commutative element $w$ can be written uniquely in the form \[(s_{i_1} s_{i_1-1}\cdots s_{j_1})(s_{i_2} s_{i_2-1}\cdots s_{j_2})\cdots (s_{i_k} s_{i_k-1}\cdots s_{j_k}),\]
where each index lies in $\{1,\dots,n\}$, $i_1<i_2<\dots<i_k$, $j_1<j_2<\dots<j_k$ and $j_m\leq i_m$ for each $1\leq m\leq k$.  Conversely, any element written in this form is fully commutative.

Then there is a bijection between the fully commutative elements and $\mathcal{I}$ given by \[w\mapsto \left(\{j_1,\dots, j_k\},\{i_1+1,\dots, i_k+1\} \right).\]
\end{remark}

\subsection{Vertical Labeling}
\label{sec:nn_vertical}

In this section we give a bijection from nonnesting partitions to noncrossing partitions as a product over certain labels of the elements of the nonnesting partition.

We first define a labeling of the positive roots $\Phi^+$ by \[\sll(e_i-e_j) := \left\lceil \frac{j+i-1}{2} \right\rceil.\]  For example, $\Phi^+$ for $\mathfrak{S}_4$ is labeled as in Figure \ref{fig:vert_label}.

\begin{center}
\begin{figure}[h!]
\begin{pspicture}(0,0)(3,1.5)
\rput(0,0){1}
\rput(1,0){2}
\rput(2,0){3}
\rput(3,0){4}
\rput(0.5,0.5){2}
\rput(1.5,0.5){3}
\rput(2.5,0.5){4}
\rput(1,1){2}
\rput(2,1){3}
\rput(1.5,1.5){3}

\end{pspicture}
\caption{}
\label{fig:vert_label}
\end{figure}
\end{center}

For a nonnesting partition $\nns$, a rank in the root poset, and a label, we define a function that records if there is a positive root in $\nns$ with that rank and label: \[\sll_{i,j}(\nns):=\begin{cases} j & \text{if } \exists \alpha \in \nns \text{ such that } \hgt(\alpha)=i, \sll(\alpha)=j \\ e & \text{otherwise} \end{cases},\] where $e$ is the identity element of $\W$.  Let $S^* \cup \{e\}$ be the set of words using simple reflections and $e$.

\begin{definition}
\label{def:bijA2}
Define $\bijVV: 2^{\Phi^+} \to S^* \cup \{e\}$ by
\[\bijVV(\nns) :=\prod_{i=n}^1 \left(\prod_{j=n}^1 s_{\sll_{i,j}(\nns)}\right)^{(-1)^i},\] where $s_e:=e.$ Define $\bijV: \NN(\W) \to \W$ by evaluating $\bijVV$ restricted to $\NN(\W)$ as an element of $\W$.
\end{definition}
\begin{lemma}\label{lem:stdcyclevert}
Let $i<j$, $i<i_1<i_2<\dots <i_k\leq j$. Then the product $$w=s_j s_{j-1}\cdots \hat{s}_{i_k}\cdots \hat{s}_{i_2}\cdots \hat{s}_{i_1}\cdots s_i s_{i+1}\cdots s_j$$
is (up to commutation) the standard form of an element $x\in\NC(\W, c)$ which is a cycle (here the hat denotes omission). In particular, if $p\in\NN(\W,c)$ satisfies $\mathrm{rk}(p)\leq 2$, then $\bijV(p)$ is (up to commutation) the standard form of a noncrossing partition which is a product of disjoint pairwise non nested cycles (see Example \ref{ex:cycle_vert}).
\end{lemma}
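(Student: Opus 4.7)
My plan has two parts. For the first claim, I would induct on $k$. The base case $k=0$ is immediate: $w = s_j s_{j-1}\cdots s_i s_{i+1}\cdots s_j = \mathbf{s_{[i,j+1]}}$ is by definition the standard form of the transposition $(i,j+1)$. For the inductive step, the idea is to isolate the syllable corresponding to the smallest omitted index $i_1$. Split the descending part of $w$ as $(s_j \cdots \hat{s}_{i_k} \cdots \hat{s}_{i_2} \cdots s_{i_1+1})(s_{i_1-1}\cdots s_{i+1} s_i)$ and the ascending part as $(s_{i+1}\cdots s_{i_1-1})(s_{i_1}\cdots s_j)$. The two middle pieces combine into the syllable $\mathbf{s_{[i,i_1]}} = (s_{i_1-1}\cdots s_i)(s_{i+1}\cdots s_{i_1-1})$. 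Since the upper descending piece uses only simple reflections with indices in $[i_1+1, j]$ while $\mathbf{s_{[i,i_1]}}$ uses only indices in $[i, i_1-1]$---two disjoint, non-adjacent sets---these commute past each other. What remains is $w = \mathbf{s_{[i,i_1]}} w'$ up to commutation, where $w'$ is exactly the word of the same shape but for $i' = i_1$, $j' = j$, and the reduced set of $k-1$ omissions $\{i_2,\dots,i_k\}$ (with the degenerate case $i_1 = j$, $k=1$ giving $w' = s_j$, the standard form of $(j, j+1)$). The inductive hypothesis gives that $w'$ is the standard form of the cycle $(i_1, i_2, \dots, i_k, j+1)$, so $w = \mathbf{s_{[i,i_1]}}\mathbf{s_{[i_1,i_2]}}\cdots\mathbf{s_{[i_k,j+1]}}$ is the standard form of the cycle $(i, i_1, i_2, \dots, i_k, j+1)$.

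For the second claim, I would unpack $\bijV(p)$ using the vertical labeling. Setting $A_h := \{a : e_a - e_{a+h} \in p\}$ for $h = 1, 2$, and using $\sll(e_a - e_{a+1}) = a$ and $\sll(e_a - e_{a+2}) = a + 1$, the assumption $\mathrm{rk}(p) \leq 2$ reduces $\bijV(p)$ to
\[
\bijV(p) = \Big(\prod_{a \in A_2,\, \mathrm{decreasing}} s_{a+1}\Big) \cdot \Big(\prod_{a \in A_1,\, \mathrm{increasing}} s_a\Big).
\]
Now decompose $A_1$ into its maximal intervals $[b_\ell, c_\ell]$ (separated by gaps of at least $2$), and set $B_\ell := A_2 \cap [b_\ell, c_\ell - 1]$. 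The order-ideal property of $p$ forces $A_2 = \bigsqcup_\ell B_\ell$. Simple reflections indexed by different intervals have disjoint, non-adjacent index sets, so they commute past one another, and $\bijV(p)$ rearranges, up to commutation, into $\prod_\ell W_\ell$ with
\[
W_\ell = \Big(\prod_{a \in B_\ell,\, \mathrm{decreasing}} s_{a+1}\Big) \cdot (s_{b_\ell} s_{b_\ell+1} \cdots s_{c_\ell}).
\]
Each $W_\ell$ has exactly the shape treated in the first part, with $i = b_\ell$, $j = c_\ell$, and omissions $[b_\ell+1, c_\ell] \setminus (B_\ell + 1)$ (the degenerate case $b_\ell = c_\ell$ reduces to the single simple reflection $s_{b_\ell} = \mathbf{s_{[b_\ell, b_\ell+1]}}$). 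Hence each $W_\ell$ is the standard form of a cycle with support $[b_\ell, c_\ell + 1] \setminus (B_\ell + 1)$. Because consecutive intervals $[b_\ell, c_\ell+1]$ are pairwise disjoint, the resulting cycles have disjoint supports lying in disjoint arcs of the $(n+1)$-gon, so they are pairwise non-nested; and because their maximal elements $c_\ell + 1$ are increasing with $\ell$, the concatenation $\prod_\ell W_\ell$ is exactly the standard form of the noncrossing partition whose polygons are these cycles.

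The main obstacle is the second part: unpacking $\bijV(p)$ and reorganizing the two monotone products so that the pieces decouple cleanly along the maximal $A_1$-intervals, while tracking which indices survive in each piece to match the form required by part 1. Part 1 itself is a clean induction, but one must verify the disjointness and non-adjacency of index sets that justify bringing $\mathbf{s_{[i,i_1]}}$ to the front.
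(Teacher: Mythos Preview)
Your proof is correct and follows essentially the same approach as the paper: induction on $k$ for the first claim, with the only cosmetic difference that you peel off the leftmost syllable $\mathbf{s_{[i,i_1]}}$ while the paper peels off the rightmost syllable $\mathbf{s_{[i_k,j+1]}}$. For the second claim the paper offers no detailed argument beyond pointing to Example~\ref{ex:cycle_vert}, so your explicit decomposition of $A_1$ into maximal intervals and the verification that the resulting blocks commute is a welcome elaboration rather than a departure.
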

\begin{proof}
We argue by induction on $k$. If $k=0$ then $w$ is the standard form of the reflection $(i,j+1)$. Assume that the result holds for some $k-1\geq 0$. Then $$w=(s_{i_k-1} \cdots \hat{s}_{i_{k-1}}\cdots \hat{s}_{i_1}\cdots s_i s_{i+1}\cdots s_{i_k-1})(s_j s_{j-1}\cdots s_{i_k} s_{i_k+1}\cdots s_j).$$
By induction, the product $$(s_{i_k-1} \cdots \hat{s}_{i_{k-1}}\cdots \hat{s}_{i_1}\cdots s_i s_{i+1}\cdots s_{i_k-1})$$ is the standard form of a cycle $x'$ supported in $[i,i_k]$ (and with $x'(i_k)\neq i_k$) while $s_j s_{j-1}\cdots s_{i_k} s_{i_k+1}\cdots s_j$ is the standard form of $(i_k,j)$. But $x' (i_k,j)$ is a cycle which lies again in $\NC(\W,c)$ and its standard form is obtained by collapsing that of $x'$ with that of $(i_k,j)$.  
\end{proof}

\begin{example}\label{ex:cycle_vert}
Consider the order ideal of rank $2$:
\begin{center}
\begin{figure}[h!]
\begin{pspicture}(0,0)(6,1)
\pspolygon[linecolor=lightgray, linewidth=0.2pt, fillstyle=solid, fillcolor=lightgray](-1,-0.5)(0.5,1)(1,0.5)(1.5,1)(3,-0.5)(4,0.5)(4.5,0)(5.5,1)(7,-0.5)
\rput(0,0){{\bf 1}}
\rput(1,0){{\bf 2}}
\rput(2,0){{\bf 3}}
\rput(3,0){{ 4}}
\rput(4,0){{\bf 5}}
\rput(5,0){{\bf 6}}
\rput(6,0){{\bf 7}}
\rput(0.5,0.5){{\bf 2}}
\rput(1.5,0.5){{\bf 3}}
\rput(2.5,0.5){{ 4}}
\rput(3.5,0.5){{ 5}}
\rput(4.5,0.5){{ 6}}
\rput(5.5,0.5){{\bf 7}}
\end{pspicture}

\end{figure}
\end{center}
One has $$\bijVV(p)=\mathbf{s_7}\mathbf{s_3}\mathbf{s_2}\mathbf{s_1}\mathbf{s_2}\mathbf{s_3}\mathbf{s_5}\mathbf{s_6}\mathbf{s_7},$$
and hence $$\bijV(p)=s_7 s_3 s_2 s_1 s_2 s_3 s_5 s_6 s_7=(s_3s_2s_1s_2s_3)(s_5)(s_7s_6s_7),$$
which is exactly the standard form of $(1,4)(5,6,8)\in\NC(\W, c)$.
\end{example}
\begin{proposition}
\label{prop:nn_to_nc_vert}
The map $\bijV$ is a bijection from $\NN(\W)$ to $\NC(\W,c)$ such that $|\nns| = \ls(\bijV(\nns)).$ More precisely, the word $\bijVV(\nns)$ after evaluation yields an $\mathcal{S}$-reduced expression which is (up to commutation) the standard form of $\bijV(\nns)$.
\end{proposition}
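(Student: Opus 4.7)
The key assertion is that $\bijVV(p)$ equals, up to commutation, the standard form of an element of $\NC(\W,c)$; reducedness and the length identity $|\nns|=\ls(\bijV(\nns))$ then follow automatically, and bijectivity follows from Proposition~\ref{prop:nc_dyck} (which already equates the cardinalities of $\NN(\W)$ and $\NC(\W,c)$) together with an explicit inverse read off from the cycle structure of $\bijV(p)$. I would proceed in two stages: first reduce to a single ``overlap component'' of $p$, and then treat each component by induction on its rank.

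Declare two roots $(a,b),(c,d)\in p$ to be overlap-equivalent if $[a,b]\cap[c,d]\neq\emptyset$, and let $C_1,\dots,C_r$ be the resulting components, so that each $C$ has support $[\alpha_C,\beta_C]\subseteq[n+1]$ and distinct components have pairwise disjoint supports. Any root of $C$ has label in $[\alpha_C,\beta_C-1]$, so the simple reflections appearing in the subword $\bijVV(C)$ lie in $\{s_{\alpha_C},\dots,s_{\beta_C-1}\}$. Consequently the simple reflections from distinct components are at index distance at least $2$ and commute pairwise, yielding $\bijVV(p)=\bijVV(C_1)\cdots\bijVV(C_r)$ up to commutations. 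Since the standard form of a product of noncrossing cycles with disjoint supports is the concatenation of their individual standard forms, it suffices to show that each $\bijVV(C)$ is, up to commutation, the standard form of a single cycle of $\NC(\W,c)$ supported in $[\alpha_C,\beta_C]$.

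For a connected component $C$ with support $[\alpha,\beta]$ and maximum height $r$, I would induct on $r$. The base case $r=1$ amounts to observing that $\bijVV(C)=s_\alpha s_{\alpha+1}\cdots s_{\beta-1}$, which is precisely the standard form of the cycle $(\alpha,\alpha+1,\dots,\beta)$. For the inductive step, peel off the top row (roots at height $r$ in $C$), whose labels form an increasing sequence $j_1<\cdots<j_s$, which $\bijVV$ places at the beginning of the word either increasing or decreasing according to the parity of $r$. Applying the inductive hypothesis to $C$ minus its top row (decomposed into its own components) yields a standard form for a product of cycles. Prepending the top row then merges these cycles into a single one via the mechanism of Lemma~\ref{lem:stdcyclevert}: the extra monotone product absorbs into the outermost syllable $\mathbf{s_{[\alpha,\beta]}}$, and the hatted indices $i_1,\dots,i_k$ of the lemma correspond precisely to the labels that are missing from the top row. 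One then verifies that the resulting cycle is noncrossing (guaranteed by the nonnesting condition on $p$) and has a vertex set identifiable from the missing-label pattern across the rows of $C$.

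The main obstacle will be this inductive step: the zigzag of monotone products in $\bijVV(C)$ must be rearranged, via commutations only, into a sequence of syllables $\mathbf{s_{[i_\ell,i_{\ell+1}]}}$. This requires carefully tracking which simple reflections in an upper row commute past those in lower rows and which must be absorbed into existing syllables, and verifying that the omitted-label pattern at each height matches the ``hat'' pattern of Lemma~\ref{lem:stdcyclevert} in a suitably iterated form. The induction thus hinges on a precise inductive description of the cycle produced from a component in terms of which labels are present at each height, and this combinatorial bookkeeping is the heart of the argument.
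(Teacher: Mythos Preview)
Your reduction to overlap components is sound as far as the factorisation $\bijVV(p)=\bijVV(C_1)\cdots\bijVV(C_r)$ goes, but the claim that each $\bijVV(C)$ evaluates to a \emph{single} cycle is false. Take $n=3$ and let $p=\Phi^+(\mathfrak{S}_4)$ be the whole root poset: this is one overlap component, yet
\[
\bijVV(p)=s_2\cdot(s_3 s_2)\cdot(s_1 s_2 s_3)=(1,4)(2,3),
\]
which has two cycles. Nested cycles of a noncrossing partition always share an overlap component, so your coarse decomposition cannot separate them, and the sentence ``it suffices to show that each $\bijVV(C)$ is the standard form of a single cycle'' is simply not the right target.

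This breaks your inductive step as well: peeling off one row does not ``merge these cycles into a single one,'' since the answer is not a single cycle, and there is no outermost syllable $\mathbf{s_{[\alpha,\beta]}}$ in the standard form of the lower piece for the top row to absorb into. The paper's proof avoids this by cutting $p$ horizontally at all \emph{even} heights and inducting on pairs of rows: each connected piece of a two-row band $\{2k{-}1,2k\}$ yields exactly one cycle via Lemma~\ref{lem:stdcyclevert}, and the crucial point is that these new cycles are supported on indices $\ell$ with $M_{p'}(\ell)=2(k{-}1)$, which the induction hypothesis guarantees are \emph{fixed} by the noncrossing partition coming from the lower bands. That disjoint-support property is what lets you concatenate standard forms, and it is tied to the parity of the cut height; a one-row peel does not give it.
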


\begin{proof}
For $i\in\{1,\dots,n\}$ and $p\in \NN(\mathfrak{S}_{n+1})$, we write $$M_p(i):=\max\{k~|~\exists \alpha\in p \text{ with }\sll(\alpha)=i\text{ and }\hgt(\alpha)=k \}.$$

Notice that for all $i$, $M_p(i)\leq\mathrm{rk}(p)$. Given an order ideal $p$, we cut horizontally at the heights $2k$ for $k=0,1,\dots,$ to obtain several bands and look at the connected components obtained at the various levels from our ordel ideal (see Figure \ref{fig:couper}). We claim that $\mathcal{V}(p)$ is a noncrossing partition, that the product gives a reduced expression for it, that each of the components obtained after cutting as above corresponds exactly to one cycle in the cycle decomposition of $\mathcal{V}(p)$ and that $j$ is not in the support of any cycle if and only if there exists $k=0,1,\dots$ such that $M_p(j)=2k$ and $M_p(j-1)\leq M_p(j)$. We show all these properties together by induction on the smallest integer $k$ such that $\mathrm{rk}(p)\leq 2k$. It is then clear that the standard form of each cycle is recovered when doing the product as above, hence that one recovers the standard form of a noncrossing partition, defined in Section~\ref{sec:nc_algebraic}. If $k=0$, then the only order ideal $p$ with $M_p(i)\leq 0$ for all $i$ is the empty order ideal, for which all the claimed properties obviously hold. 

Hence assume that the claimed properties hold for any order ideal $p$ with $2(k-2)\leq \mathrm{rk}(p)\leq 2(k-1)$ and assume that $2(k-1)\leq \mathrm{rk}(p)\leq 2k$. Consider the order ideal $p'$ obtained by cutting $p$ at height $2(k-1)$, that is
\[\bijVV(\nns') :=\prod_{i=2(k-1)}^{1} \left(\prod_{j=n}^1 s_{\sll_{i,j}(\nns')}\right)^{(-1)^i},\]
The word 
\[ \mathbf{w}:= \prod_{i=n}^{2(k-1)+1} \left(\prod_{j=n}^1 s_{\sll_{i,j}(\nns')}\right)^{(-1)^i}=\prod_{i=2k}^{2(k-1)+1} \left(\prod_{j=n}^1 s_{\sll_{i,j}(\nns')}\right)^{(-1)^i}\]
which must be added at the beginning of $\bijVV(\nns')$ to obtain $\bijVV(\nns)$ is the product of all the labels at height $2k$ from the right to the left, followed by the product of the labels at height $2k-1$ from the left to the right. Consider the last letter $\mathbf{s_i}$ in $\mathbf{w}$ and the largest $j$ such that $\mathbf{w'}:=\mathbf{s_j}\mathbf{s_{j+1}}\cdots\mathbf{s_{i}}$ is a terminal subword of $\mathbf{w}$. By maximality of $j$ there is no $\mathbf{s_{i-j-1}}$ at height $2k-1$ and hence at height $2k$, there is neither a $\mathbf{s_{i-j-1}}$ nor a $\mathbf{s_{i-j}}$.   This implies that the labels between $\mathbf{s_{i-j+1}}$ and $\mathbf{s_i}$ which may occur at height $2k$ commute in (the evaluation of) $\mathbf{w}$ with all the letters lying between $\mathbf{w'}$ and them, and can therefore be moved to the right just before $\mathbf{w}$. Now since we have the labels $\mathbf{s_j}\mathbf{s_{j+1}}\cdots\mathbf{s_{i}}$ at height $2k-1$, the labels $\mathbf{s_{i-j}},\cdots, \mathbf{s_{i+1}}$ must occur at height $2(k-1)$, hence $M_{p'}(\ell)=2(k-1)=\mathrm{rk}(p')$ for $\ell=i-j,\dots, i+1$. By induction, it follows that $i-j,\dots,i+1$ are not in the support of $p'$. Therefore $\bijV(p')$ fixes pointwise the interval $[i-j,i+1]$, hence multiplying $\bijV(p')$ by any noncrossing partition in the parabolic subgroup generated by the $s_{i-j},\dots, s_{i}$ still yields a noncrossing partition. By Lemma \ref{lem:stdcyclevert}, $\mathbf{w'}$ together with the product of the labels between $\mathbf{s_{i-j+1}}$ and $\mathbf{s_i}$ at height $2k$ (which have been moved just after $\mathbf{w'}$) is exactly the standard form (up to commutation and inverse) of a cycle with support in $[i-j,i+1]$ which fixes neither $i-j$ nor $i+1$, with an element $\ell\in[i-j,i+1]$ not in its support if and only if $\mathbf{s_{\ell}}$ occurs at height $2k$. This shows that multiplying $\bijVV(p')$ by the subword of $\mathbf{w}$ corresponding to the rightmost connected component of the labels at heights $2k-1$ and $2k$ still yields a noncrossing partition and shows all the claims; one argues exactly the same for the other connected components. 

We now show the property on the $M_p(\ell)$; if $\ell$ lies in  one of the sets $[i-j,i+1]$ corresponding to the added cycles as above, then we fall into an already-proved case.  We can therefore assume that $\ell$ is not in one of these sets---that is, that $M_{p'}(\ell)=M_p(\ell)$ and $M_{p'}(\ell-1)=M_{p}(\ell-1)$.  We first assume that $j$ is not in the support of any cycle of $p$. Since $p$ is obtained from $p'$ by adding blocks, then $j$ was not in the support of $p'$, so that by induction $M_{p'}(\ell-1)\leq M_{p'}(\ell)=2k'$. Conversely, assume that $M_p(\ell-1)\leq M_p(\ell)=2k'$. Then the same equality holds if we replace $p$ by $p'$, so that $j$ is not in the support of $p'$.  Since it is also not in the support of the added cycles, we conclude that it is not in the support of $p$.

\end{proof}

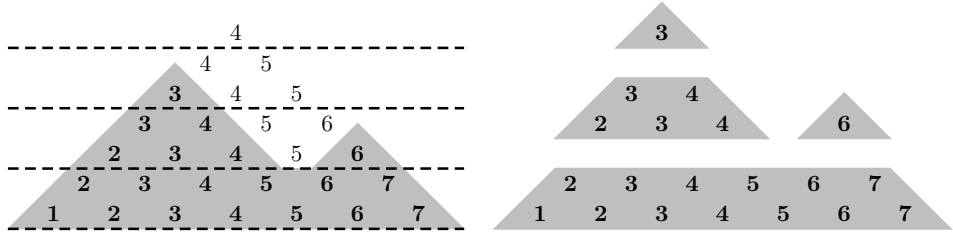
\begin{figure}
\begin{center}
\psscalebox{0.8}{
\begin{pspicture}(0,-0.5)(14,3.5)

\pspolygon[linecolor=lightgray, linewidth=0.2pt, fillstyle=solid, fillcolor=lightgray](-0.75,-0.25)(2,2.5)(3.75,0.75)(4.25,0.75)(5,1.5)(6.75,-0.25)
\psline[linestyle=dashed, linewidth=1.2pt](-0.75,0.75)(6.75,0.75)
\psline[linestyle=dashed, linewidth=1.2pt](-0.75,1.75)(6.75,1.75)
\psline[linestyle=dashed, linewidth=1.2pt](-0.75,-0.25)(6.75,-0.25)
\psline[linestyle=dashed, linewidth=1.2pt](-0.75,2.75)(6.75,2.75)
\rput(0,0){{\bf 1}}
\rput(1,0){{\bf 2}}
\rput(0.5,0.5){{\bf 2}}
\rput(2,0){{\bf 3}}
\rput(1.5,0.5){{\bf 3}}
\rput(1,1){{\bf 2}}
\rput(3,0){{\bf 4}}
\rput(2.5,0.5){{\bf 4}}
\rput(2,1){{\bf 3}}
\rput(1.5,1.5){{\bf 3}}
\rput(4,0){{\bf 5}}
\rput(3.5,0.5){{\bf 5}}
\rput(3,1){{\bf 4}}
\rput(2.5,1.5){{\bf 4}}
\rput(2,2){{\bf 3}}
\rput(5,0){{\bf 6}}
\rput(4.5,0.5){{\bf 6}}
\rput(4,1){5}
\rput(3.5,1.5){5}
\rput(3,2){4}
\rput(2.5,2.5){4}
\rput(6,0){{\bf 7}}
\rput(5.5,0.5){{\bf 7}}
\rput(5,1){{\bf 6}}
\rput(4.5,1.5){6}
\rput(4,2){5}
\rput(3.5,2.5){5}
\rput(3,3){4}

\pspolygon[linecolor=lightgray, fillstyle=solid, fillcolor=lightgray](7.25,-0.25)(8.25,0.75)(13.75,0.75)(14.75,-0.25)
\pspolygon[linecolor=lightgray, fillstyle=solid, fillcolor=lightgray](8.25,1.25)(9.25,2.25)(10.75,2.25)(11.75,1.25)

\pspolygon[linecolor=lightgray, fillstyle=solid, fillcolor=lightgray](9.25,2.75)(10,3.5)(10.75,2.75)

\pspolygon[linecolor=lightgray, fillstyle=solid, fillcolor=lightgray](12.25,1.25)(13,2)(13.75,1.25)

\rput(8,0){{\bf 1}}
\rput(9,0){{\bf 2}}
\rput(8.5,0.5){{\bf 2}}
\rput(10,0){{\bf 3}}
\rput(9.5,0.5){{\bf 3}}
\rput(9,1.5){{\bf 2}}
\rput(11,0){{\bf 4}}
\rput(10.5,0.5){{\bf 4}}
\rput(10,1.5){{\bf 3}}
\rput(9.5,2){{\bf 3}}
\rput(12,0){{\bf 5}}
\rput(11.5,0.5){{\bf 5}}
\rput(11,1.5){{\bf 4}}
\rput(10.5,2){{\bf 4}}
\rput(10,3){{\bf 3}}
\rput(13,0){{\bf 6}}
\rput(12.5,0.5){{\bf 6}}
\rput(14,0){{\bf 7}}
\rput(13.5,0.5){{\bf 7}}
\rput(13,1.5){{\bf 6}}

\end{pspicture}}
\end{center}
\caption{An example of how to read the cycles of $\bijV(p)$ from $p$. Each of the cycles in this example correspond to reflections; they are $(1,8)$, $(2,5)$, $(3,4)$, and $(6,7)$.}
\label{fig:couper}
\end{figure}

\begin{corollary}
        Let $\nns \in \NN(\W)$.  The rank of $\bijV(\nns)$ in absolute order is equal to 
        \[\lt(\bijV(\nns)):=|\{\alpha \in \nns : \hgt(\alpha)=1,3,5,\ldots\}| - |\{\alpha \in
\nns : \hgt(\alpha)=2,4,6,\ldots\}|.\]  In particular, there are
$\frac{1}{n+1}\binom{n+1}{k}\binom{n+1}{k-1}$ elements of $\NN(\W)$ with
$\lt(\bijV(\nns))=k$.
\end{corollary}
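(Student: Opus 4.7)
My approach is to leverage the band-by-band construction of $\bijV(\nns)$ implicit in the proof of Proposition~\ref{prop:nn_to_nc_vert}. There, $\Phi^+$ is partitioned into horizontal bands $B_k := \{\alpha \in \Phi^+ : \hgt(\alpha) \in \{2k-1, 2k\}\}$, and the connected components of $\nns \cap B_k$---each a maximal run of labels at the odd height $2k-1$ together with all labels at the even height $2k$ lying in its range---correspond bijectively to the polygons of $\bijV(\nns)$. The key structural fact I will extract from that construction and from Lemma~\ref{lem:stdcyclevert} is: a component producing a polygon $P = [v_1 < v_2 < \cdots < v_m]$ contains exactly $v_m - v_1$ roots at the odd height $2k-1$ (with labels $v_1, v_1+1, \ldots, v_m - 1$) and exactly $v_m - v_1 + 1 - m$ roots at the even height $2k$ (the ``excluded'' labels, in $[v_1+1, v_m-1] \setminus \{v_2, \ldots, v_{m-1}\}$).

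Granting this, each component contributes
\[
(v_m - v_1) - (v_m - v_1 + 1 - m) = m - 1 = \lt(P)
\]
to the alternating sum $N_{\text{odd}} - N_{\text{even}}$, where $N_h := |\{\alpha \in \nns : \hgt(\alpha) = h\}|$. Since the bands partition $\Phi^+$ and the components partition $\nns$, summing over all polygons yields
\[
N_{\text{odd}} - N_{\text{even}} = \sum_P \lt(P) = \lt(\bijV(\nns)),
\]
establishing the main identity. The enumeration then follows because $\lt(\bijV(\nns)) = k$ if and only if $\bijV(\nns)$, regarded as a noncrossing partition of $[n+1]$, has exactly $(n+1)-k$ blocks (counting singleton cycles), and noncrossing partitions of $[n+1]$ with a prescribed number of blocks are classically counted by a Narayana number.

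The main obstacle will be extracting the structural claim cleanly from the proof of Proposition~\ref{prop:nn_to_nc_vert}: it is implicit in the inductive construction there, but making it explicit requires carefully matching the labels $s_\ell$ produced at each band with the support of the corresponding cycle as characterized by Lemma~\ref{lem:stdcyclevert}, and verifying that no label in $\nns$ is counted twice (each root lies in a unique band, and within the band in a unique component). Beyond this bookkeeping, the argument introduces no techniques not already developed in Section~\ref{sec:nn_vertical}.
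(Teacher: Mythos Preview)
Your proof is correct and takes essentially the same approach as the paper: the paper's one-line proof observes that each cycle of $\bijV(\nns)$ comes from a connected component within a two-row band, and its reflection length equals the difference between the number of roots in the odd row and the even row of that component. You have simply made this explicit, computing that a component producing a polygon on vertices $v_1<\cdots<v_m$ contributes $(v_m-v_1)-(v_m-v_1+1-m)=m-1$ to the alternating sum, which is exactly the structural fact the paper invokes from the proof of Proposition~\ref{prop:nn_to_nc_vert}.
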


\begin{proof}
The proof of \ref{prop:nn_to_nc_vert} tells us how to read a cycle in the decomposition of $\mathcal{V}(p)$ from $p$. The length of the cycle corresponding to a component extracted from two consecutive rows is the difference in the lengths of those rows.
\end{proof}

\subsection{Diagonal Labeling}
\label{sec:nn_diagonal}

In this section, we provide a second labeling of the root poset, such that a product \emph{in a different order} using this new labeling gives a bijection to noncrossing partitions.  Although the order of the product is more complicated than in Section~\ref{sec:nn_vertical}, the labeling is correspondingly simpler.

Label the positive roots $\Phi^+$ by $\sl(e_i-e_j) = j-1$.  For example, in type $A_3$, $\Phi^+(\W)$ is labeled by 

\[\includegraphics[height=0.5in]{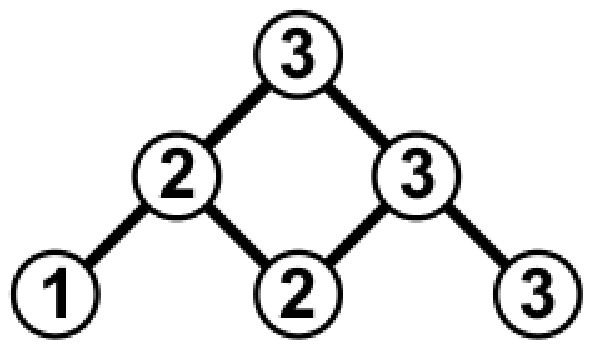}.\]

As before, for a nonnesting partition $\nns$, let \[\sl_{i,j}(\nns):=\begin{cases} j & \text{if } \exists \alpha \in \nns \text{ such that } \hgt(\alpha)=i, \sl(\alpha)=j \\ e & \text{otherwise.} \end{cases}\]

\begin{definition}
\label{def:bijA}
Define $\bijDD: 2^{\Phi^+} \to S^* \cup \{e\}$ by
\[\bijDD(\nns) := 
\left(\prod_{i \text{ even}} \prod_{j=n}^1 s_{\sl_{i,j}(\nns)}\right) \left(\prod_{i \text{ odd}} \prod_{j=n}^1 s_{\sl_{i,j}(\nns)}\right)^{-1},\] where $s_e:=e$ and both the product over odd rows and the product over even rows are in increasing order.
Define $\bijD: \NN(\W) \to \W$ by evaluating $\bijDD$ restricted to $\NN(\W)$ as an element of $\W$.
\end{definition}

\begin{example}
Let us consider the order ideal $\nns$ given in Figure \ref{fig:diag.ex}. Then $$\bijDD(\nns)=((\mathbf{s_5}\mathbf{s_4}\mathbf{s_3}\mathbf{s_2})(\mathbf{s_4}))((\mathbf{s_5}\mathbf{s_4}\mathbf{s_3}\mathbf{s_2}\mathbf{s_1})(\mathbf{s_4}\mathbf{s_3}))^{-1},$$
hence a straightforward computation yields $$\bijD(\nns)=(s_3s_2s_3)(s_5s_4s_3s_2s_1s_2s_3s_4s_5)=(2,4)(1,6)\in\NC(\mathfrak{S}_6,c).$$
\begin{center}
\begin{figure}[h!]
\psscalebox{0.8}{
\begin{pspicture}(0,0)(4,2)

\pspolygon[linecolor=lightgray, linewidth=0.2pt, fillstyle=solid, fillcolor=lightgray](-0.75,-0.25)(1.5,2)(3,0.5)(3.5,1)(4.75,-0.25)
\rput(0,0){{\bf 1}}
\rput(1,0){{\bf 2}}
\rput(0.5,0.5){{\bf 2}}
\rput(2,0){{\bf 3}}
\rput(1.5,0.5){{\bf 3}}
\rput(1,1){{\bf 3}}
\rput(3,0){{\bf 4}}
\rput(2.5,0.5){{\bf 4}}
\rput(2,1){{\bf 4}}
\rput(1.5,1.5){{\bf 4}}
\rput(4,0){{\bf 5}}
\rput(3.5,0.5){{\bf 5}}
\rput(3,1){{\bf 5}}
\rput(2.5,1.5){{\bf 5}}
\rput(2,2){{\bf 5}}
\end{pspicture}}
\label{fig:diag.ex}
\caption{}
\end{figure}
\end{center}

\end{example}
\begin{remark}
\label{rem:sort}
It is easy to see that this same labeling, with the map given instead by the product \[\mathbf{U}(I) := \prod_{i=1}^n \prod_{j=n}^1 s_{\sl_{i,j}(\nns)}\]
\noindent gives a bijection $\mathcal{U}$ between $\NN(\W)$ and $231$-avoiding permutations such that $|\nns| = \ls(\mathcal{S}(\nns)).$
\end{remark}

\begin{proposition}
\label{them:typea}
The map $\bijD$ is a bijection from $\NN(\W)$ to $\NC(\W,c)$ such that $|\nns| = \ls(\bijD(\nns)).$
\end{proposition}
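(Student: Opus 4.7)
The plan is to prove this by induction on the rank of the order ideal $\nns$, in parallel with the proof of Proposition~\ref{prop:nn_to_nc_vert}. The base case of the empty ideal is immediate. For the inductive step, I would remove the top row at height $k$ of $\nns$ to obtain an ideal $\nns'$ of rank $k-1$, and compare $\bijDD(\nns')$ and $\bijDD(\nns)$: the new factor $r_k := \prod_{j=n}^1 s_{\sl_{k,j}(\nns)}$ is inserted in the \emph{middle} of $\bijDD(\nns')$ (between the even-height product and the inverted odd-height product), as $r_k$ itself if $k$ is even and as $r_k^{-1}$ if $k$ is odd.

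The central step is an analog of Lemma~\ref{lem:stdcyclevert} for the diagonal labeling. The key feature here is that the diagonal label is constant along each column of the root poset, so each column of $\nns$ contributes a vertical stack of identical letters $s_j$, one per rank in that column. I would show that the letters coming from a single ``connected staircase'' of columns reaching rank $k$ can, via commutations and braid relations, be combined with the letters of $\bijDD(\nns')$ at heights $k-1, k-2, \dots$ to produce the standard form of one additional cycle (or the extension of an existing cycle) in the standard form of $\bijD(\nns')$, so that the overall result is the standard form of a noncrossing partition as defined in Section~\ref{sec:nc_algebraic}. Summing over the connected components of the top row, and using that each of the $|\nns|$ roots contributes exactly one letter to $\bijDD(\nns)$, the word is in standard form, hence reduced, and $|\nns| = \ls(\bijD(\nns))$; bijectivity follows by comparing cardinalities (both sides are Catalan-many) and reconstructing $\nns$ from the cycle structure of $\bijD(\nns)$.

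The main obstacle I expect is the asymmetry of the product in Definition~\ref{def:bijA}: even and odd heights are separated, with one group inverted. Unlike in Proposition~\ref{prop:nn_to_nc_vert}, the newly added row is not appended at one end of the word but is inserted in the interior, so one must ensure that the commutation and braid moves used to reshape $\bijDD(\nns)$ into standard form never create cancellations that would shorten the word. An alternative and likely cleaner route---presumably the content of Theorem~\ref{thm:all_equivalent}---is to prove directly that $\bijD(\nns) = \bijV(\nns)$ for every $\nns$ by exhibiting an explicit sequence of commutations and braid relations transforming $\bijDD(\nns)$ into $\bijVV(\nns)$; the proposition would then follow immediately from Proposition~\ref{prop:nn_to_nc_vert}, bypassing the delicate bookkeeping of the inductive argument entirely.
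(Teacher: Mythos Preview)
Your proposal takes a genuinely different route from the paper. The paper proves this proposition via the Kreweras complement: it defines an action $\KrewNN_c$ on nonnesting partitions (Definition~\ref{def:krewmove}) and checks that $\bijD$ intertwines it with the usual Kreweras map on noncrossing partitions, i.e.\ $\bijD(\KrewNN_c(\nns)) = \bijD(\nns)^{-1}c = \Krew_c(\bijD(\nns))$. The induction is then on the rank of the ambient Coxeter group rather than on the height of $\nns$: if $\nns$ misses a simple root one restricts to a proper parabolic subgroup of type $A$; if $\nns$ contains all simple roots, one application of $\KrewNN_c$ removes $\alpha_n$ and lands in the first case. This completely sidesteps the row-by-row bookkeeping you are worried about---the equivariance absorbs the ``insertion in the interior'' in one stroke, and reducedness follows from a parabolic decomposition argument.

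Your primary approach (induction on the height of $\nns$, peeling off the top row) is only a sketch, and you correctly identify the obstacle: the even/odd split in Definition~\ref{def:bijA} means the new row is inserted in the interior of the word rather than appended at an end, so controlling the braid and commutation moves without accidental cancellation is genuinely delicate. It is not obviously wrong, but it is not a proof as written. Your \emph{alternative} route---showing $\bijD(\nns) = \bijV(\nns)$ by an explicit word manipulation and then invoking Proposition~\ref{prop:nn_to_nc_vert}---is precisely what the paper carries out in Theorem~\ref{thm:all_equivalent}, and the paper explicitly remarks just before the present proof that this would also suffice. So your instinct there is correct; the paper simply opted to give an independent Kreweras-based argument ``for variety.''
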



For variety, we use the Kreweras complement defined on both noncrossing and nonnesting partitions to prove this, in a manner very similar in spirit to the ideas in \cite{ArmstrongStumpThomas} and \cite{StrikerWilliams}.  Note that this will also follow from Theorem~\ref{thm:all_equivalent}, which shows that $\bijD = \bijV$.

We show how to interpret the Kreweras complement on nonnesting partitions.  Given a nonnesting partition $\nns$, let $k$ be minimial so that $\alpha_{k+1} \not \in \nns$.  The \defn{initial part} $\Initial(\nns)$ is the nonnesting partition in the parabolic subgroup $\left\langle s_1,\dots, s_k\right\rangle$ that coincides with $\nns$ restricted to $\alpha_1,\alpha_2,\ldots,\alpha_k$.  The \defn{final part} $\Final(\nns)$ is the nonnesting partition in the parabolic subgroup $\left\langle s_{k+1},\dots, s_n\right\rangle$ that coincides with $\nns$ restricted to $\alpha_{k+1},\alpha_{k+2},\ldots,\alpha_{n}$.  

\begin{definition}
\label{def:krewmove}
Define the action $\KrewNN_c$ on $\nns \in \NN(\W)$ by replacing each root $e_i-e_j \in \Initial(\nns)$ by $e_i-e_{j-1}$ and each root $e_i-e_j \in \Final(\nns)$ by $e_{i-1}-e_j$.  The simple roots $\alpha_{k+1},\alpha_{k+2},\ldots,\alpha_n$ are added to $\Final(\nns)$.	
\end{definition}

\noindent It is not hard to check that $\KrewNN_c$ is invertible on $\NN(\W)$.

\begin{proof}[Proof of Proposition~\ref{them:typea}]
We consider two cases: if the nonnesting partition $\nns$ does not contain every simple root, or if $\nns$ does contain every simple root.  We will show that $|\nns| = \ls(\bijD(\nns))$ by proving that the word defined by $\bijD$ is a reduced $\mathcal{S}$-word.
\begin{enumerate}
	\item If the nonnesting partition $\nns$ does not contain every simple root, as parabolic subgroups of type $A$ are again of type $A$, we conclude the statement by induction on rank.
	
	\item Otherwise, we have a nonnesting partition $\nns$ containing every simple root.   It is immediate from Definition~\ref{def:krewmove} that $\bijD$ is equivariant with respect to $\KrewNN_c$ on such a nonnesting partition, since \[\bijD(\KrewNN_c(\nns)) = \bijD(\nns)^{-1} c=\Krew_c(\bijD(\nns)).\]  From Definition~\ref{def:krewmove}, $\KrewNN_c(\nns)$ removes the simple root $\alpha_n$ from the nonnesting partition $\nns$, so that $\KrewNN_c(\nns)$ falls into case (1).  Since $\Krew_c$ is invertible on $\Pc$, we again conclude by induction on rank that \[\bijD(\nns)=\Krew^{-1}_c(\Krew_c(\bijD(\nns)))=\Krew^{-1}_c(\bijD(\KrewNN_c(\nns)))=c \cdot \bijD(\KrewNN_c(\nns)))^{-1}\] is a bijection from nonnesting partitions that do contain every simple root to noncrossing partitions with reduced $S$-word containing all simple reflections.  By induction, $\bijD(\KrewNN_c(\nns))$ is reduced and contains no copy of the simple reflection $s_1$, so that $\bijD(\nns)=c \cdot \bijD(\KrewNN_c(\nns)))^{-1}$ is a parabolic decomposition with respect to $\{s_1\} \cup \{s_2,s_3,\ldots,s_{n}\}$ and is therefore reduced.  
\end{enumerate}

Since every nonnesting partition falls into one of the two cases, and since $|\NN(\W)|=|\Pc|=\frac{1}{n+2}\binom{2(n+1)}{n+1}$, we conclude the result.
\end{proof}

\begin{example}
\label{ex:typea}
In type $A_3$, the orbit of $\KrewNN_c$ on $\raisebox{-0.5\height}{\includegraphics[width=0.8in]{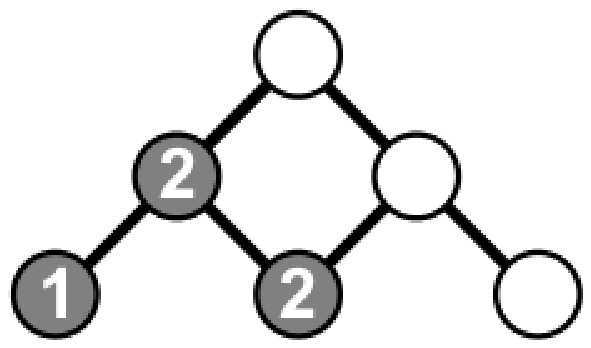}}$
 has period 4.  The first line of the table gives the orbit, while the second line records the factorizations of the corresponding noncrossing partitions under $\bijD$.
\begin{center}
$\begin{array}{|cccc|}
\hline 
 \includegraphics[width=1in]{a3oi1} & \includegraphics[width=1in]{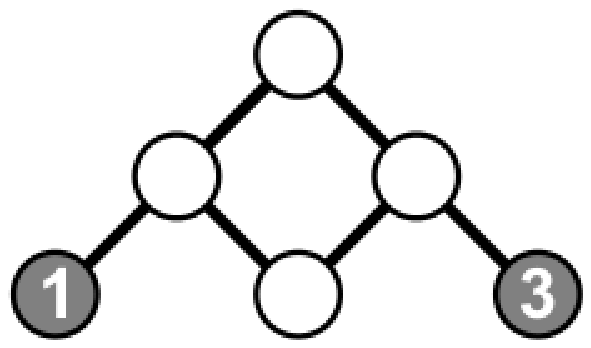} & \includegraphics[width=1in]{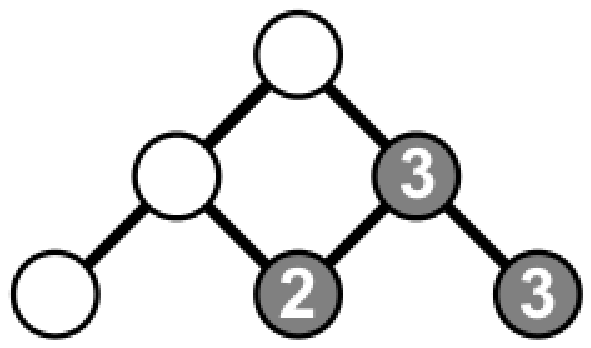} & \includegraphics[width=1in]{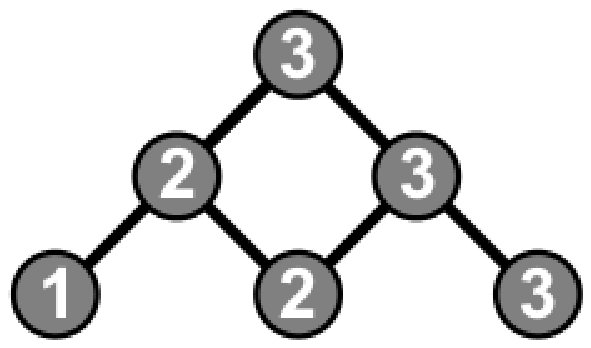}  \\ \hline
 s_2 s_1 | s_2 & s_3 s_1& s_3 s_2 | s_3& s_3 s_2 s_1 |s_3 |s_2 s_3 \\ \hline
\end{array}$
\end{center}
\end{example}

\subsection{Equivalences}

In this section, we show that all three bijections from Sections~\ref{sec:dyck},~\ref{sec:nn_vertical}, and~\ref{sec:nn_diagonal} are the same.

\begin{theorem}\label{thm:all_equivalent}
	For all $\nns \in \NN(\W),$ \[\Dyck(\nns)=\bijV(\nns)=\bijD(\nns).\]
\end{theorem}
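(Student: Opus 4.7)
The strategy is to establish two of the three equalities — say $\Dyck = \bijV$ and $\bijV = \bijD$ — which together yield the theorem. Each requires a different technique: the first exploits the combinatorial characterization of noncrossing partitions by their initial/final sets, while the second leverages equivariance under Kreweras complementation.

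For $\Dyck = \bijV$, by Proposition~\ref{prop:terminalinitial} any noncrossing partition $\ncs$ is uniquely determined by its pair $(D_\ncs, U_\ncs)$, and the construction in the proof of that proposition shows that this encoding is precisely the Dyck-path encoding via Steps I--IV of Section~\ref{sec:dyck}. Hence it suffices to check that the pair $(D_{\bijV(p)}, U_{\bijV(p)})$ agrees with the one read off from the boundary of the order ideal $p$. The proof of Proposition~\ref{prop:nn_to_nc_vert} provides an explicit description of the cycles of $\bijV(p)$: cutting $p$ at even heights, each connected component at heights $2k-1, 2k$ spanning horizontal positions $[i-j, i+1]$ produces a cycle whose initial vertex is $i-j$, whose terminal vertex is $i+1$, and whose intermediate vertices are exactly those $\ell \in [i-j+1, i]$ for which $\mathbf{s_\ell}$ appears at height $2k$. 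One then verifies case by case (isolated index, initial only, terminal only, or both) that this matches the up/down encoding of the Dyck path of $p$ read column by column.

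For $\bijV = \bijD$, I would mimic the proof of Proposition~\ref{them:typea} and show that $\bijV$ is also equivariant with respect to $\KrewNN_c$ on nonnesting partitions and $\Krew_c$ on noncrossing partitions, that is
\[ \bijV(\KrewNN_c(p)) = \Krew_c(\bijV(p)). \]
This is a direct computation tracking how the index shifts imposed on the initial and final parts of $p$ by Definition~\ref{def:krewmove} interact with the vertical labeling $\sll$; the key point is that shifting $e_i - e_j \mapsto e_i - e_{j-1}$ (resp.\ $e_{i-1} - e_j$) preserves $\sll$ up to the parity pattern that governs the product in Definition~\ref{def:bijA2}. Together with the base case $\bijV(\emptyset) = e = \bijD(\emptyset)$, and the reduction via Kreweras to an order ideal missing some simple root (where one inducts on the rank of the parabolic subgroup generated by the support, exactly as in Proposition~\ref{them:typea}), this yields $\bijV = \bijD$.

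The main obstacle will be the case analysis for $\Dyck = \bijV$: one must track carefully how the cycle structure of $\bijV(p)$ extracted in the proof of Proposition~\ref{prop:nn_to_nc_vert} corresponds, column by column, to the local behavior of the Dyck path of $p$. Once the key correspondence between connected components of $p$ (cut at even heights) and cycles of $\bijV(p)$ is used to read off $(D_{\bijV(p)}, U_{\bijV(p)})$, the remainder is essentially bookkeeping; the equivariance needed for $\bijV = \bijD$ is a shorter computation by comparison.
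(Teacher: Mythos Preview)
Your treatment of $\Dyck=\bijV$ is essentially the paper's: both read the cycles of $\bijV(p)$ off the horizontal slices of $p$ (as in the proof of Proposition~\ref{prop:nn_to_nc_vert}) and match them to the Dyck-path encoding; whether one phrases this via the pair $(D,U)$ or via the cycles directly is cosmetic.

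For $\bijV=\bijD$ you take a genuinely different route. The paper does \emph{not} use Kreweras equivariance here: it proves the identity at the level of words by conjugating the odd rows of $\bijVV(p)^{-1}$ through the even rows below them, observing that each such conjugation simply shifts every vertical label by one, and that after all shifts the vertical label $\sll(e_i-e_j)=\lceil(i+j-1)/2\rceil$ becomes the diagonal label $\sl(e_i-e_j)=j-1$. This turns $\bijVV(p)^{-1}$ into $\bijDD(p)^{-1}$ in a few lines.

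Your Kreweras-based induction is sound in outline, but the step you call ``a direct computation'' --- the equivariance $\bijV(\KrewNN_c(p))=\bijV(p)^{-1}c$ --- is not as light as you suggest. Under the shift $e_i-e_j\mapsto e_i-e_{j-1}$ of Definition~\ref{def:krewmove}, the vertical label changes by $0$ when the height $j-i$ is odd and by $-1$ when it is even, while the height always drops by one; so the rows of $\bijVV$ get reindexed and relabeled in a parity-dependent way that does not visibly produce the factor $c$ without further manipulation. In effect you would end up redoing the paper's conjugation argument inside this computation. So your approach can be made to work, but it is longer, not shorter, than the paper's direct word identity; the paper's method buys you $\bijV=\bijD$ without ever invoking $\KrewNN_c$.
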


\begin{proof}
	We first show $\bijV=\bijD$ by converting the product $\bijDD(\nns)^{-1}$ into the product $\bijVV(\nns)^{-1}.$  As $\nns$ is an order ideal, we first observe that conjugating the labels $\sll$ of an odd row $2k+1$ through each even row $2i$ for $1 \leq i \leq k$ that lies below it has the simple effect of increasing each label by the number of such even rows.  The highest row on which the root $e_i-e_j$ occurs is row $j-i$. When $j-i$ is odd, there are $\frac{j-i-1}{2}$ even rows below, so that $\sll(e_i-e_j)+\frac{j-i-1}{2}  = \lceil \frac{j+i-1}{2} \rceil+\frac{j-i-1}{2} = j-1 = \sl(e_i-e_j)$.  Similarly, conjugating the labels $\sll$ of an even row $2k$ through each even row $2i$ for $1 \leq i < k$ that lies below it also increases each label by the number of such even rows.  When $j-i$ is even, there are $\frac{j-i-2}{2}$ even rows below, so that $\sll(e_i-e_j)+\frac{j-i-2}{2}  = \lceil \frac{j+i-1}{2} \rceil+\frac{j-i-2}{2} = j-1 = \sl(e_i-e_j)$.  We compute
	
\begin{align*}	
\bijVV(\nns)^{-1} &= \prod_{i=1}^n \left(\prod_{j=1}^n s_{\sll_{i,j}(\nns)}\right)^{(-1)^i} \\
 &= \prod_{i \text{ odd}} \left(\prod_{j=1}^n s_{\sll_{i,j}(\nns)+(j-i-1)/2}\right)^{-1} \left( \prod_{i \text{ even}} \prod_{j=1}^n s_{\sll_{i,j}(\nns)}\right) \\
&=\left( \prod_{i \text{ odd}} \prod_{j=n}^1 s_{\sl_{i,j}(\nns)}\right)
\left( \prod_{i \text{ even}}\prod_{j=n}^1 s_{\sll_{i,j}(\nns)+(j-i-2)/2}\right)^{-1}\\
&=\left(\prod_{i \text{ odd}} \prod_{j=n}^1 s_{\sl_{i,j}(\nns)}\right)
\left(\prod_{i \text{ even}} \prod_{j=n}^1 s_{\sl_{i,j}(\nns)}\right)^{-1} = \bijDD(\nns)^{-1}.
\end{align*}	

We now show that $\Dyck = \bijV$. In the proof of Proposition~\ref{prop:nn_to_nc_vert}, we saw how to recover the various cycles of $\bijV(p)$ from $p$. Given a Dyck path, the beginning of a cycle in the corresponding noncrossing partition is given by two consecutive steps up, the first one beginning at a point with even coordinates, while the end of a cycle is given by two consecutive steps down, the last one ending at a point with even coordinates. Hence the cycles are extracted from the Dyck path in the same way as they are extracted from $p$, and it is then easy to see that the various obtained cycles must be equal.
\end{proof}

\section{Vectors}
\label{sec:vectors}

In this section, we define two sets of vectors corresponding to the labelings given in Sections~\ref{sec:nn_vertical} and~\ref{sec:nn_diagonal}.

\subsection{Vertical Vectors}\label{sub:vert}






\begin{definition}\label{defn:vertvect}
	For $\nns \in \NN(\W),$ $i\in[n],$ define \[\sll_{i}(\nns):=\left| \{ \alpha \in \nns : \sll(\alpha)=i\}\right|.\]
\end{definition}

It follows from the proof of Proposition~\ref{prop:nn_to_nc_vert} that $\sll_{i}(\nns)$ is equal to the number of copies of $s_i$ in the standard form of $\bijV(\nns)$.

For the noncrossing partition $x$ such that $\bijV(\nns)=x$, we can characterize $\sll_{i}(\nns)$ in a purely combinatorial way on the graphical representation of $x$. We write $P_1,\dots, P_r$ for the elements of $\Pol(x)$. For a polygon $P=[i_1 i_2 \cdots i_k]$, an integer $i\in \{2,\dots,n\}$ is \defn{nested} in $P$ if there exists $1<j\leq k$ with $i_{j-1}<i<i_j$.  Then we have  \[\sll_{i}(\nns)=2|\{P_m~|~i~\text{is nested in }P_m\}|+\mathbbm{1}_{i\in D_x},\]
where $\mathbbm{1}_{i\in D_x}=\begin{cases} 1 & \text{if } i\in D_x\\ 0 & \text{if } i\notin D_x\end{cases}.$  We write $\Nest(i):=|\{m~|~i~\text{is nested in }P_m\}|,$ omitting the dependance on $x$, and define $v_x:=(\sll_{i}(\nns))_{i=1}^n \in (\mathbb{Z}_{\geq 0})^n.$

For convenience we will write $x_i:=\sll_{i}(\nns).$

We now use both the vertical labeling of the root poset and this combinatorial way of reading the vertical vectors directly on the graphical representation of noncrossing partitions to characterize them.
\begin{lemma}\label{cor:injective}
Let $x,y\in \NC(\W, c)$. Then $x=y$ if and only if $v_x=v_y$. 
\end{lemma}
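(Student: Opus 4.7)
The forward direction is immediate from the definition of $v_x$. For the converse, my plan is to show that $v_x$ determines the pair $(D_x, U_x)$; Proposition~\ref{prop:terminalinitial} then immediately forces $x = y$ whenever $v_x = v_y$.

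First, I would exploit the combinatorial formula $x_i = 2\Nest(i) + \mathbbm{1}_{i \in D_x}$ recorded immediately before the lemma. Since $\Nest(i)$ is a non-negative integer and $\mathbbm{1}_{i \in D_x} \in \{0,1\}$, the parity of $x_i$ recovers whether $i \in D_x$, hence the whole initial set $D_x$; then $\Nest(i) = \lfloor x_i/2 \rfloor$ is also recovered for every $i \in [n]$. I will adopt the natural convention $\Nest(1) = \Nest(n+1) = 0$, which is forced by the fact that no polygon can have a nested index strictly smaller than $1$ or strictly greater than $n+1$.

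The main step is to recover $U_x$ from this data. I would work in the linear arc picture of $x$ (as in Figure~\ref{figure:diagramme}) and, for each gap $j \in \{0,1,\dots,n+1\}$, introduce the count $f(j)$ of arcs $(a,b)$ of $x$ with $a \leq j < b$. Splitting these arcs by whether $a = j$ or $a < j$ gives
\[ f(j) = \Nest(j) + \mathbbm{1}_{j \in D_x}, \]
so $f$ is determined by $v_x$. Next, as one moves the gap from $j-1$ to $j$, one gains exactly the arcs starting at $j$ and loses exactly the arcs ending at $j$, which for a noncrossing partition are each either $0$ or $1$ in number; hence
\[ f(j) - f(j-1) = \mathbbm{1}_{j \in D_x} - \mathbbm{1}_{j \in U_x}. \]
Solving this last identity for $\mathbbm{1}_{j \in U_x}$ extracts $U_x$ from already-recovered data.

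I do not anticipate a substantial obstacle beyond bookkeeping: the only care required is handling the boundary gaps $j = 0$ and $j = n+1$ (using $f(0) = f(n+1) = 0$) and checking that the reconstructed quantity $\mathbbm{1}_{j \in U_x}$ really lies in $\{0,1\}$, which is guaranteed by the noncrossing property of $x$. Once $(D_x, U_x)$ has been read off from $v_x$, the bijection $\epsilon$ of Proposition~\ref{prop:terminalinitial} closes the argument.
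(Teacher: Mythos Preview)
Your argument is correct. The paper actually gives two proofs of this lemma. The first, immediately following the statement, is quite different from yours: it simply invokes the bijection $\bijV$ with order ideals from Section~\ref{sec:nn_vertical}, using that the roots carrying a fixed vertical label form a chain in $\Phi^+$, so the vector $(\sll_i(p))_i$ determines the order ideal $p$ and hence $x$. The second proof, placed after Lemma~\ref{lem:parity}, is the one your approach parallels: it also recovers $(D_x,U_x)$ from $v_x$ and then appeals to Proposition~\ref{prop:terminalinitial}, reading off $D_x$ from the parity of the $x_i$ exactly as you do, but extracting $U_x$ via the case analysis in point~(5) of Lemma~\ref{lem:parity} rather than via your arc-counting function $f(j)$.

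Your telescoping identity $f(j)-f(j-1)=\mathbbm{1}_{j\in D_x}-\mathbbm{1}_{j\in U_x}$ is a clean substitute for that case analysis and has the advantage of being entirely self-contained, not relying on Lemma~\ref{lem:parity}. On the other hand, the paper's one-line first proof is shorter still, since the heavy lifting has already been done in establishing that $\bijV$ is a bijection. Both routes are sound; yours sits neatly between the two in terms of machinery used.
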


\begin{proof}
Thanks to the previous section, we have that $x=y$ if and only if $p_x=p_y$. 
\end{proof}

\begin{lemma}
\label{lem:parity}
For $x \in \NC(\W,c)$, the vector $v_x$ has the following properties:
\begin{enumerate}
	\item If $x_i$ is even and $x_{i+1}>x_i$, then $x_{i+1}=x_i+1$.
	\item If $x_i$ is odd and $x_{
    i+1}>x_i$, then $x_{i+1}=x_i+1$ or $x_i+2$.
	\item If $x_i$ is even and $x_{i+1}<x_i$, then $(x_{i+1}=x_i-1$ or $x_{i+1}=x_i-2)$ and $(i+1\in U_x)$.
	\item If $x_i$ is odd and $x_{i+1}<x_i$, then $x_{i+1}=x_i-1$ and $i+1\in U_x$ ($i+1$ is even maximal in its polygon).
	\item The integer $i\neq n+1$ lies in $U_x$ in exactly two situations:\begin{enumerate} 
	\item if $x_i$ is odd and $(x_{i-1}=x_i$ or $x_{i-1}=x_i+1)$ 
	\item if $x_i$ is even and $x_i<x_{i-1}$. 
\end{enumerate}
The integer $i=n+1$ lies in $U_x$ if and only if $x_{i-1}>0$.
\end{enumerate}
\end{lemma}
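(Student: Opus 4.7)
The plan is to deduce the lemma from the combinatorial formula
\[x_i = 2\,\Nest(i) + \mathbbm{1}_{i\in D_x}\]
recorded in Section~\ref{sub:vert}, combined with a systematic analysis of how $\Nest$ and $D_x$ change as $i$ increases by one.

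First I classify each index $i\in[n+1]$ by its \emph{role} $\tau(i)\in\{\emptyset,I,M,T\}$: $\tau(i)=\emptyset$ if $\{i\}$ is a singleton block of $x$, and otherwise $\tau(i)$ equals $I$, $M$, or $T$ according as $i$ is the smallest, an interior, or the largest element of its block. From the definitions of $D_x$ and $U_x$ one has $i\in D_x \iff \tau(i)\in\{I,M\}$ and $i\in U_x \iff \tau(i)\in\{M,T\}$; in particular the parity of $x_i$ is controlled by $\tau(i)$, with $x_i$ odd precisely when $\tau(i)\in\{I,M\}$.

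I would then compute $\Nest(i+1)-\Nest(i)$ polygon by polygon. Because the blocks are disjoint, $i$ and $i+1$ each lie in at most one polygon of size $\geq 2$, denoted $P_i$ and $P_{i+1}$ when defined. A direct inspection shows that a polygon $P$ contributes $+1$ to the difference exactly when $P=P_i$, $\tau(i)\in\{I,M\}$, and $P_i\neq P_{i+1}$; it contributes $-1$ exactly when $P=P_{i+1}$, $\tau(i+1)\in\{M,T\}$, and $P_i\neq P_{i+1}$; and if $i,i+1$ are consecutive vertices of a common polygon ($P_i=P_{i+1}$) then $\Nest$ is unchanged. Combined with the trivially computed $\mathbbm{1}_{i+1\in D_x}-\mathbbm{1}_{i\in D_x}$, this yields a closed form for $x_{i+1}-x_i$ as a function of $\tau(i)$, $\tau(i+1)$, and whether $P_i=P_{i+1}$, producing a small finite table.

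Each of (1)--(4) is then read off that table: for instance, (1) restricts to $\tau(i)\in\{\emptyset,T\}$ with positive increment, leaving only the transitions $(\emptyset,I)$ and $(T,I)$, both giving $+1$. Claim (5) follows by applying the same table to the step $i-1\to i$ and reading off the constraints that $\tau(i)\in\{M,T\}$ imposes on $x_{i-1}$ versus $x_i$, with $\tau(i)=M$ producing (5)(a) and $\tau(i)=T$ producing (5)(b); conversely, the rows with $\tau(i)\in\{\emptyset,I\}$ show that neither (5)(a) nor (5)(b) can hold in those cases. The boundary case $i=n+1$ is handled separately by observing that $\tau(n+1)\in\{\emptyset,T\}$ (no vertex exceeds $n+1$): when $\tau(n+1)=T$, the vertex of the polygon of $n+1$ immediately below it forces either $n\in D_x$ or $\Nest(n)\geq 1$, hence $x_n>0$; when $\tau(n+1)=\emptyset$, no polygon of size $\geq 2$ meets $n+1$, forcing $\Nest(n)=0$ and $n\notin D_x$, i.e., $x_n=0$. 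The main obstacle is bookkeeping rather than mathematical depth: singleton blocks must not contribute spuriously to $\Nest$, $D_x$ or $U_x$, the ``$P_i=P_{i+1}$ versus $P_i\neq P_{i+1}$'' distinction must be kept in mind throughout, and the boundary index $n+1$ requires its own small argument.
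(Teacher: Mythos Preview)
Your approach is correct and is essentially the same as the paper's: both start from the formula $x_i=2\,\Nest(i)+\mathbbm{1}_{i\in D_x}$ and analyse how $\Nest$ and $D_x$-membership change from $i$ to $i+1$, splitting into cases according to whether $i$ (and $i+1$) is initial, interior, terminal, or a singleton in its block and whether the two indices lie in the same polygon. The only cosmetic difference is that you package the case analysis into a role function $\tau$ and a finite table, whereas the paper treats the parity cases (1)/(3) and (2)/(4) directly and then reuses (3)--(4) for the converse in (5); both amount to the same bookkeeping.
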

\begin{proof}
$1$ and $3$. If $x_i$ is even, then $\Nest(i)=\Nest(i+1)$ except in case $i+1\in U_x$ where $\Nest(i+1)=N(i)-1$. In the latter case, $x_{i+1}=x_i-2$ if $i+1\notin D_x$ and $x_{i+1}=x_i-1$ if $i+1\in D_x$. If $\Nest(i)=\Nest(i+1),$ $x_i=x_{i+1}$ if $i+1\notin D_x$ and $x_{i+1}=x_i+1$ if $i+1\in D_x$. 

$2$ and $4$. In this case, $i\in D_x$. Therefore, there exists $P\in\Pol(x)$ with $i\in P$ but $i$ is not terminal in $P$. If $i+1\in P$ and is terminal then $x_{i+1}=x_i-1$ since $i\in D_x$, $i+1\notin D_x$ and $\Nest(i)=\Nest(i+1)$. If $i+1\in P$ and is not terminal then $x_i=x_{i-1}$. If $i+1\notin P$ then $i+1$ is nested in $P$ (since $i\in D_x\cap P$) implying $\Nest(i+1)=\Nest(i)+1$. We then have $x_{i+1}=x_i+1$ if $i+1\notin D_x$ and $x_{i+1}=x_i+2$ if $i+1\in D_x$ (which is possible if $i+1$ is initial).

$5$. First assume that $i\in U_x$ and let $P\in\Pol(x)$ with $i\in P$. If $i\in D_x$, then $x_i$ is odd and $x_{i-1}=x_i$ if $i-1\in P$ and $x_{i-1}=x_i+1$ if $i-1\notin P$ (in which case $i-1$ has to be nested in $P$ since $i\in U_x$ but cannot lie in $D_x$). If $i\notin D_x$ then $x_i$ is even.  In this case, $x_{i-1}=x_i+1$ if $i-1\in P$ and $x_{i-1}=x_i+2$ if $i-1\notin P$ since in that case $i-1$ must be nested in $P$ (because $i$ is maximal in $P$). For the converse, all the cases where $x_i\neq x_{i-1}$ are given by $3$ and $4$. It remains to show that if $x_i=x_{i-1}$ and $x_i$ is odd, then $i\in U_x$. If $i$ and $i-1$ are not lying in the same polygon $P$, the assumption $i-1\in D_x$ implies that $i$ is nested in $P$ which contradicts $x_i=x_{i-1}$. Hence they lie in the same polygon $P$ and $i\in U_x$. Now consider the case where $i=n+1$: if $n+1\in U_x$ then there is $P\in\Pol(x)$ with $n+1\in P$. If $n\in P$, then $x_n=1$; if $n\notin P$, then $n$ is nested in $P$ and $x_n=2$. Conversely if $x_n>0$, then either $n\in D_x$ forcing $n+1\in U_x$ or $n$ is nested in a polygon $P$ which therefore needs to have $n+1$ as vertex. 
\end{proof}

\begin{proof}
If $x\neq y$, then $(D_x,U_x)\neq(D_y,U_y)$ by Proposition~\ref{prop:terminalinitial}. Since $i\in D_x$ if and only if $x_i$ is odd, by point $5$ of Lemma~\ref{lem:parity}, $v_x\neq v_y$.
\end{proof}

\begin{proposition}\label{prop:bij}
The map $x\mapsto v_x$, $x\in \NC(\W, c)$ defines a bijection from $\NC(\W,c)$ to the set of vectors $w=(w_i)_{i=1}^n\in(\mathbb{Z}_{\geq 0})^n$ with the following properties:
\begin{itemize}
	\item If $i$ is the smallest $i$ with $w_i\neq 0$, then $w_i=1$.
	\item If $i$ is the largest $i$ with $w_i\neq 0$, then $w_i=1$ or $w_i=2$.
	\item If $w_i$ is even and $w_{i+1}>w_i$ then $w_{i+1}=w_i+1$.
	\item If $w_i$ is odd and $w_{i+1}>w_i$ then $w_{i+1}=w_i+1$ or $w_{i+1}=w_i+2$.
	\item If $w_i$ is even and $w_{i+1}<w_i$, then $w_{i+1}=w_i-1$ or $w_{i+1}=w_i-2$.
	\item If $w_i$ is odd and $w_{i+1}<w_i$, then $w_{i+1}=w_i-1$.
\end{itemize}
\end{proposition}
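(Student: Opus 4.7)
The proof plan has three steps: well-definedness (the image of $x \mapsto v_x$ lies in the described set), injectivity, and surjectivity. \emph{Well-definedness} follows from Lemma~\ref{lem:parity}: items 1--4 give the four interior transition conditions verbatim. For the ``smallest nonzero'' condition, if $i > 1$ is the smallest index with $(v_x)_i \neq 0$ then $(v_x)_{i-1} = 0$ is even and Lemma~\ref{lem:parity}(1) forces $(v_x)_i = 1$; for $i = 1$, vertex $1$ cannot be nested in any polygon, so $(v_x)_1 \in \{0, 1\}$. Symmetrically, if $i < n$ is the largest nonzero index, Lemma~\ref{lem:parity}(3)--(4) give $(v_x)_i \in \{1, 2\}$; for $i = n$, only one polygon can contain vertex $n+1$, so $\Nest(n) \leq 1$, and this nesting is incompatible with $n \in D_x$, giving $(v_x)_n \leq 2$. \emph{Injectivity} is Lemma~\ref{cor:injective}.

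For \emph{surjectivity}, given $w$ satisfying the six conditions, I would reconstruct $(D, U)$ from $w$ using Lemma~\ref{lem:parity}(5): set $D := \{i : w_i \text{ odd}\} \subseteq \{1, \ldots, n\}$ and include $i \in \{2, \ldots, n\}$ in $U$ iff either $w_i$ is odd with $w_{i-1} \in \{w_i, w_i + 1\}$, or $w_i$ is even with $w_i < w_{i-1}$; also include $n+1$ iff $w_n > 0$. The critical observation is the telescoping identity
\[\mathbbm{1}_{i \in U} - \mathbbm{1}_{w_i \text{ odd}} = \lceil w_{i-1}/2 \rceil - \lceil w_i/2 \rceil,\]
verified case-by-case through the eight transitions $w_{i-1} \to w_i$ allowed by the six conditions. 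Summing from $i = 2$ to $m$ and using $w_1 \in \{0, 1\}$ gives $|D \cap \{1, \ldots, m\}| - |U \cap \{1, \ldots, m\}| = \lceil w_m/2 \rceil$ for $m \leq n$; extending to $m = n+1$ and using $w_n \in \{0, 1, 2\}$ yields $|D| = |U|$. The strictness $d_k < e_k$ follows from $\lceil w_{d_k}/2 \rceil \geq 1$. Hence $(D, U) \in \mathcal{I}$, and Proposition~\ref{prop:terminalinitial} produces a unique $x \in \NC(\W, c)$ with $(D_x, U_x) = (D, U)$. Finally, applying the same telescoping to $v_x$---which satisfies the six conditions by well-definedness and yields the same $(D, U)$ via Lemma~\ref{lem:parity}(5)---one obtains $w_i = 2(|D \cap \{1, \ldots, i\}| - |U \cap \{1, \ldots, i\}|) - \mathbbm{1}_{i \in D} = (v_x)_i$, so $v_x = w$.

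\emph{Main obstacle.} The key insight is identifying the height function $\lceil w_i/2 \rceil$ that makes the telescoping identity hold; the case-by-case verification through the eight transitions is routine but somewhat intricate. Once this identity is in hand, all the remaining surjectivity claims---$|D| = |U|$, $d_k < e_k$, and $v_x = w$---fall out as direct consequences of the boundary conditions on $w_1$ and $w_n$. An alternative bypassing this telescoping would be to count the vectors satisfying the six conditions directly via a bijection with Dyck paths of semilength $n+1$ and combine the count with injectivity.
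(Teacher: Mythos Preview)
Your proof is correct but takes a genuinely different route from the paper for surjectivity. The paper argues by induction on $\sum_i w_i$, working through the vertical labeling of the root poset: given $w$ satisfying the six conditions, it picks an index $i$ with $w_i$ maximal, checks that decreasing $w_i$ by one still yields a vector $w'$ satisfying the conditions, obtains by induction an order ideal $p'$ with vertical vector $w'$, and then verifies that a single root can be added to $p'$ to produce an order ideal $p$ with vertical vector $w$. Well-definedness is left implicit (it is contained in Lemma~\ref{lem:parity}), and injectivity is the same Lemma~\ref{cor:injective} you use.

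Your surjectivity argument instead stays on the $(D,U)$ side: the telescoping identity $\mathbbm{1}_{i\in U}-\mathbbm{1}_{w_i\text{ odd}}=\lceil w_{i-1}/2\rceil-\lceil w_i/2\rceil$ packages the eight allowed transitions into the height function $m\mapsto\lceil w_m/2\rceil$, from which $(D,U)\in\mathcal{I}$ and the closed formula $w_i=2(|D\cap[1,i]|-|U\cap[1,i]|)-\mathbbm{1}_{i\in D}$ both drop out. This bypasses the order-ideal machinery entirely, relying only on Proposition~\ref{prop:terminalinitial} and Lemma~\ref{lem:parity}. The paper's approach is more in keeping with its overall program (the vertical labeling is what drives the later poset isomorphism), while yours is more self-contained and yields as a bonus an explicit expression for $v_x$ purely in terms of $(D_x,U_x)$.
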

\noindent It is convenient to represent the last four conditions as follows:
\begin{center}
\begin{tabular}{|l|c|r|}
  \hline
  $w_{i+1}-w_i$ & $w_{i+1}$ even & $w_{i+1}$ odd \\
  \hline
  $w_i$ even & $-2$ or $0$ & $1$ or $-1$ \\
  \hline
  $w_i$ odd & $1$ or $-1$ & $2$ or $0$ \\
  \hline
\end{tabular}
\end{center}
Examples of vectors satisfying these conditions are given in example \ref{exple:v}.
\begin{proof}
The injectivity of $x\mapsto v_x$ is given by \ref{cor:injective}. We show surjectivity using the vertical labeling of the root poset. We show by induction on the sum of the components of the vector $w$ that there exists an ordel ideal $p$ with vertical labeling giving the vector $w$. If $w$ is the zero vector then the empty ideal has the suitable labeling. Now assume that $w$ has nonzero entries and consider any index $i$ such that the corresponding entry $w_i$ is maximal. If $w_i$ is even then the conditions imply that $w_{i+1}=w_i, w_i-1$ or $w_i-2$ and $w_{i-1}=w_i$ or $w_i-1$. In particular, replacing $w_i$ by $w_i-1$ gives a vector $w'$ which still satisfies the conditions, so that by induction there exists an order ideal $p'$ with labeling giving $w'$. But since the largest height in $p'$ where there is a root with label $i$ is $w'_i$ which is odd and $w'_{i-1}=w_{i-1}\geq w'_i$, there is also a root at height $w'_i$ with label $i-1$ hence we can add the root $\alpha$ with label $i$ at height $w'_i+1$ to $p'$ giving again an order ideal $p:=p'\cup\{\alpha\}$ with labeling giving $w$. 

If $w_i$ is odd, then $w_{i-1}=w_i$, $w_i-1$ or $w_i-2$ and $w_{i-1}=w_i$ or $w_i-1$. In particular, replacing $w_i$ by $w_i-1$ yields a vector $w'$ which still satisfies the conditions above with corresponding order ideal $p'$. But since the biggest height in $p'$ where there is a root with label $i$ is $w'_i$ which is even and $w'_{i+1}=w_{i+1}\geq w'_i$, there is also a root with label $i+1$ at height $w'_i$ hence we can add the root $\alpha$ with label $i$ at height $w'_i+1$ to $p'$ giving again an order ideal $p:=p'\cup\{\alpha\}$ with labeling giving $w$.

\end{proof}
We will denote by $\Vn$ the set of vectors in $(\mathbb{Z}_{\geq 0})^n$ satisfying the properties of Proposition \ref{prop:bij}. By the bijection in Proposition~\ref{prop:bij}, there are Catalan many vectors in $\Vn$.
\begin{example}\label{exple:v}
The five elements of ${\sf{V}}_2$ are $(0,0)$, $(1,0)$, $(0,1)$, $(1,1)$, $(1,2)$. The fourteen elements of ${\sf{V}}_3$ are $(0,0,0)$, $(1,0,0)$, $(0,1,0)$, $(0,0,1)$, $(1,1,0)$, $(1,0,1)$, $(0,1,1)$, $(1,1,1)$, $(1,2,0)$, $(0,1,2)$, $(1,2,1)$, $(1,1,2)$, $(1,2,2)$, $(1,3,2)$.  
\end{example}

\subsection{Diagonal Vectors}



The definition corresponding to Definition~\ref{defn:vertvect} on the diagonal labeling of Section~\ref{sub:vert} results in a set of vectors that are trivially seen to be counted by the Catalan numbers.

\begin{definition}
	For $\nns \in \NN(\W),$ define \[\sl_{i}(\nns):=\left| \{ \alpha \in \nns : \sl(\alpha)=i\}\right|\] and let $u_\nns := (\sl_{i}(\nns))_{i=1}^n.$
\end{definition}

\begin{proposition}
	The map $p \mapsto u_p$ is a bijection from $\NN(\W,c)$ to the set of vectors $w=(w_i)_{i=1}^n$ such that $1-w_1 \leq 2-w_2 \leq \cdots \leq n-w_n$ and $w_i \leq i$.
\end{proposition}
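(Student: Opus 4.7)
My plan is to unpack the definitions so that the order ideal condition on $p \in \NN(\W)$ translates directly into the two inequalities on the vector $u_p$.

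First, I would observe that under the diagonal labeling $\sl(e_k - e_{i+1}) = i$, the set of roots with label $i$ is exactly the ``column''
\[ D_i := \{e_k - e_{i+1} : 1 \leq k \leq i\}, \]
so $w_i = \sl_i(p) = |p \cap D_i|$ and the bound $w_i \leq i$ is immediate. Next, I would note that inside each $D_i$ the cover relations of the root poset form a chain, with bottom $e_i - e_{i+1}$ and top $e_1 - e_{i+1}$: indeed $(e_k - e_{i+1}) - (e_{k+1} - e_{i+1}) = \alpha_k$. Since $p$ is an order ideal, $p \cap D_i$ is a down-set in this chain, hence equals
\[ p \cap D_i = \{e_k - e_{i+1} : i+1-w_i \leq k \leq i\}. \]

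The second ingredient is the interaction between consecutive columns. The only cover relations of the root poset joining $D_{i+1}$ to $D_i$ are $e_k - e_{i+2} \gtrdot e_k - e_{i+1}$ for $1 \leq k \leq i$. The ideal condition on $p$ along such covers says: whenever $e_k - e_{i+2} \in p$ then $e_k - e_{i+1} \in p$, i.e.\ $k \geq i + 2 - w_{i+1}$ implies $k \geq i+1 - w_i$. This is exactly $w_{i+1} \leq w_i + 1$, which rewrites as $i - w_i \leq (i+1) - w_{i+1}$. Conversely, I would check that these inequalities together with the within-column description force closure under every cover relation of $\Phi^+$, so they are both necessary and sufficient.

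With this in hand, the inverse map writes itself: given any vector $w$ satisfying $w_i \leq i$ and $w_{i+1} \leq w_i + 1$ (with the convention $w_0 = 0$, so that $w_1 \leq 1$ takes care of the base case), I would define
\[ p_w := \bigcup_{i=1}^n \{e_k - e_{i+1} : i+1-w_i \leq k \leq i\}, \]
verify directly from the two inequalities that $p_w$ is closed under the cover relations described above, and observe that $u_{p_w} = w$ and $p_{u_p} = p$ by construction. The only nontrivial point is the back-and-forth between the constraint ``$p$ is an order ideal'' and the constraint ``$w_{i+1} \leq w_i + 1$,'' which I have isolated above; the rest is bookkeeping.
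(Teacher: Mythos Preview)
Your proof is correct and is the spelled-out version of the paper's one-line argument. The paper simply observes that taking the complement of $p$ inside the root poset gives a Ferrers shape inside a staircase; in coordinates this says exactly that the column-complement sizes $i-w_i$ form a weakly increasing sequence bounded by the staircase, which is precisely what you verify by tracking the within-column and between-column cover relations.
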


\begin{proof}
	Taking the complement of $w$ inside the root poset gives a Ferrers shape inside a staircase.
\end{proof}

\section{Bruhat Order}
\label{sec:bruhat}

In this section, we prove that the poset of order ideals ordered by inclusion is isomorphic to the poset of noncrossing partitions ordered by the restriction of the Bruhat order by showing that the bijection $\bijD=\bijV=\Dyck$ is a poset isomorphism.  We denote by $\leq$ the Bruhat order on $\W$. For any two noncrossing partitions $x$ and $y$, their nonnesting partitions are related by $p_x\leq p_y$ if and only if $x_i\leq y_i$ for all $i\in [n]$, where $x_i$ and $y_i$ are the coordinates of the vertical vectors $v_x=(x_i)_{i=1}^n$ and $v_y=(y_i)_{i=1}^n$.   Thus, the following criterion is equivalent to establishing the poset isomorphism.

\begin{theorem}\label{thm:caract}
Let $x, y\in \NC(\W, c)$. Then $$x\leq y\Leftrightarrow \forall i, x_i\leq y_i.$$
\end{theorem}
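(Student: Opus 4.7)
Plan: I prove the biconditional by handling the two implications separately, relying throughout on the combinatorial characterization of $v_x$ from Proposition~\ref{prop:bij}.

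For the implication $v_x \leq v_y \Rightarrow x \leq y$, I induct on $|p_y| - |p_x|$. Under the bijection between $\NC(\W, c)$ and $\NN(\W)$, the componentwise order on $\Vn$ corresponds to inclusion of order ideals, so it suffices to treat a cover $p_y = p_x \cup \{\alpha\}$ for a single root $\alpha$. Inspecting the vertical product from Definition~\ref{def:bijA2} (together with the analysis carried out in the proof of Proposition~\ref{prop:nn_to_nc_vert}) shows that $\bijVV(p_y)$ is obtained from $\bijVV(p_x)$ by inserting the single letter $s_{\sll(\alpha)}$ at the appropriate position, and the resulting word is again the standard form $m_y$. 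Hence $m_x$ occurs as a subword of $m_y$, and $x \leq y$ in Bruhat order follows from the subword property.

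The reverse implication $x \leq y \Rightarrow v_x \leq v_y$ is the main obstacle, since the number of occurrences of $s_i$ in a reduced expression is not a Bruhat invariant and so the subword property cannot be applied directly. My plan is to recover each coordinate $x_i = 2\,\Nest(i) + \mathbbm{1}_{i \in D_x}$ from Bruhat-monotone data of $x$ viewed as a permutation of $[n+1]$. Concretely, I will express $x_i$ as a fixed nonnegative integer combination of values of the tableau rank function $r_x(a, b) := |\{k \leq a : x(k) > b\}|$, with $a, b$ depending only on $i$. Because each $r_x(a, b)$ is monotone under Bruhat order on $\mathfrak{S}_{n+1}$ and because the combination is uniform across $\NC(\W, c)$, the inequality $x \leq y$ will yield $x_i \leq y_i$ for every $i$.

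The main difficulty lies in pinning down the uniform identity expressing $x_i$ in terms of $r_x$ for all noncrossing $x$. I expect this to succeed because the noncrossing condition forces the polygon structure to be rigidly reflected in the rank function: each polygon $[i_1 < i_2 < \cdots < i_k]$ of $x$ contributes predictably to both $\Nest(i)$ and to $r_x(a, b)$ for the relevant $a, b$. As a fallback, I would describe the covers of the restricted Bruhat order on $\NC(\W, c)$ by explicit local moves on polygons (extending, splitting, or merging them) and verify case by case that each such cover increases $v$ by exactly one admissible step in $\Vn$, which suffices by induction on chain length in the restricted order.
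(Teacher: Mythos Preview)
Your plan is correct and is essentially the paper's own argument (Section~6.1): the implication $v_x\le v_y\Rightarrow x\le y$ is proved there exactly by the subword property applied to the reduced words $\bijVV(p_x)\subset\bijVV(p_y)$, and the converse is proved via the Ehresmann tableau criterion, just as you propose. The uniform identity you flag as the ``main difficulty'' is in fact immediate from $x_i=2\,\Nest(i)+\mathbbm{1}_{i\in D_x}$: one checks that $x[i-1,i+1]=\Nest(i)$ and $x[i,i+1]=\Nest(i)+\mathbbm{1}_{i\in D_x}$, whence $x_i=x[i-1,i+1]+x[i,i+1]$ is the desired nonnegative combination (the paper presents this as a short parity case-split rather than as a single formula).
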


\noindent An immediate corollary is:
\begin{corollary}\label{cor:lattice}
The poset $(\NC(\W,c), \leq)$ is a graded distributive lattice. 
\end{corollary}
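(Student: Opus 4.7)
The plan is to derive the corollary from Theorem~\ref{thm:caract} together with two standard facts about order ideals. By the theorem, Bruhat order on $\NC(\W,c)$ corresponds to componentwise order on the vertical vectors $v_x$; by the paragraph immediately preceding the corollary, componentwise order on vertical vectors corresponds in turn to inclusion of the associated order ideals $p_x \in \NN(\W)$. Composing, the bijection $\bijV : \NN(\W) \to \NC(\W,c)$ of Proposition~\ref{prop:nn_to_nc_vert} becomes a poset isomorphism from $(\NN(\W), \subseteq)$ to $(\NC(\W,c), \leq)$. It therefore suffices to show that the poset of order ideals of $\Phi^+$, ordered by inclusion, is a graded distributive lattice.

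This is a special case of the easy half of Birkhoff's representation theorem: for any finite poset $\mathcal{P}$, the collection of order ideals $J(\mathcal{P})$ is a distributive lattice under inclusion, with meet and join given by intersection and union respectively. The key point is simply that the intersection and the union of two order ideals of $\mathcal{P}$ are again order ideals, so the lattice operations are well defined, and distributivity of $\cap$ over $\cup$ (and vice versa) is inherited from the same identities for arbitrary sets. I would apply this observation to $\mathcal{P} = \Phi^+$.

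For gradedness, I would use the rank function $p \mapsto |p|$ on $J(\Phi^+)$: a cover relation $p \lessdot q$ consists of adjoining to $p$ a single element that is maximal in $q$, so the rank jumps by one at each cover. Transporting this across the isomorphism, the rank of $x \in \NC(\W,c)$ in Bruhat order equals $|p_x|$, which by Proposition~\ref{prop:nn_to_nc_vert} coincides with $\ls(x)$; this is consistent with the fact that Bruhat order on $\W$ is always graded by $\ls$.

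There is no real obstacle here: the entire substance of the corollary has been absorbed into Theorem~\ref{thm:caract}, and the distributive-lattice and gradedness claims are immediate from the elementary theory of $J(\mathcal{P})$. The only item deserving even a sentence of care is the equivalence between componentwise inequality of vertical vectors and containment of the corresponding order ideals, but this is transparent from the definition of $\sll_i(p)$ as a count: $p \subseteq q$ plainly implies $\sll_i(p) \leq \sll_i(q)$ for every $i$, and the reverse implication follows because $v$ determines $p$ (Proposition~\ref{prop:bij}) and because equality of componentwise sums forces equality of the ideals cell by cell.
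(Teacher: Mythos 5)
Your proof is correct and follows the same route the paper intends: the paper states the corollary as immediate from Theorem~\ref{thm:caract} together with the equivalence (asserted at the start of Section~\ref{sec:bruhat}) between componentwise order on vertical vectors and inclusion of order ideals, so that distributivity and gradedness are inherited from $J(\Phi^+)$ exactly as you argue. The one place your write-up is looser than it should be is the reverse implication ``$\sll_i(p)\leq\sll_i(q)$ for all $i$ implies $p\subseteq q$'': injectivity of $p\mapsto v_p$ does not by itself give order-preservation, and ``equality of sums forces equality cell by cell'' does not address an inequality. The clean justification is that for each fixed label $i$ the roots with $\sll(\alpha)=i$ form a chain in $\Phi^+$ (consecutive heights $1,2,3,\dots$), so an order ideal meets each label class in an initial segment of that chain determined by its cardinality; with that observation the equivalence, and hence the corollary, is indeed immediate.
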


The next two subsections are devoted to two different proofs of Theorem~\ref{thm:caract}, using the vertical and diagonal labelings, respectively.

\subsection{Proof of the lattice property using the vertical labeling}
We will use the tableau criterion given by Theorem 2.1.5 of \cite{B} which we recall below. We use the same notation as in \cite{B}---that is, for $x\in\mathfrak{S}_{n+1}$, $i, j\in\{1,\dots,n+1\}$, $$x[i,j]:=|\{a\in [i]~|~x(a)\geq j\}|.$$
\begin{theorem}[\cite{B}, Theorem 2.1.5]\label{thm:brenti}
Let $x,y\in\mathfrak{S}_{n+1}$. The following are equivalent:
\begin{enumerate}
\item $x\leq y$
\item $x[i,j]\leq y[i,j]$, for all $i,j\in\{1,\dots,n+1\}$.
\end{enumerate}
\end{theorem}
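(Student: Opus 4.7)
The plan is to prove the two implications separately; the forward direction is a direct computation on cover relations, while the reverse direction requires constructing explicit Bruhat covers along a carefully chosen path.

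For $(1)\Rightarrow(2)$, I would reduce to a single Bruhat cover $x\lessdot y$ and then chain. If $y=x\cdot(a,b)$ with $a<b$ is such a cover, the one-line notations of $x$ and $y$ agree outside the positions $\{a,b\}$ and are swapped there, with $x(a)<x(b)$. A direct case analysis then yields
\[ y[i,j]-x[i,j] \;=\; \begin{cases} 1 & \text{if } a\leq i<b \text{ and } x(a)<j\leq x(b), \\ 0 & \text{otherwise,}\end{cases}\]
so the inequality holds. Iterating along a saturated chain gives the general statement.

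For $(2)\Rightarrow(1)$, I would induct on $\ls(y)$. The base case $y=e$ is immediate: the constraints $x[i,i+1]\leq y[i,i+1]=0$ force $x(a)\leq i$ for all $a\leq i$, which inductively yields $x(i)=i$ and thus $x=e$. For the inductive step, assume $x\neq y$ and construct a Bruhat cover $x\lessdot x'$ with $x'[i,j]\leq y[i,j]$ for all $i,j$; applying the inductive hypothesis (to the pair $(x',y)$, with $\ls(y)-\ls(x')<\ls(y)-\ls(x)$) gives $x'\leq y$, hence $x\leq y$.

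The crux is the construction of $x'$. I would let $i_0$ be the smallest index with $x(i_0)\neq y(i_0)$; since $x(k)=y(k)$ for $k<i_0$, the hypothesis at $(i_0,y(i_0))$ forces $x(i_0)<y(i_0)$, so $y(i_0)$ appears as $x(k)$ for some $k>i_0$. Let $k_0>i_0$ be minimal with $x(i_0)<x(k_0)\leq y(i_0)$, and set $x':=x\cdot(i_0,k_0)$. Minimality of $k_0$ ensures no position between $i_0$ and $k_0$ has value strictly between $x(i_0)$ and $x(k_0)$, so $x\lessdot x'$ is a Bruhat cover. The main obstacle is then the verification that $x'[i,j]\leq y[i,j]$ survives the swap: by the computation of the first paragraph, the dangerous cells are those with $i_0\leq i<k_0$ and $x(i_0)<j\leq x(k_0)$, where one needs the \emph{strict} inequality $x[i,j]<y[i,j]$. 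This strictness should follow from a careful comparison: the value $y(i_0)\geq x(k_0)\geq j$ contributes to $y[i,j]$ already at position $i_0\leq i$, whereas in $x$ this same value lives at position $k_0>i$, so $y[i,j]$ counts it while $x[i,j]$ does not. Turning this bookkeeping into a clean proof at every $(i,j)$ in the critical range is the delicate part, but it should be achievable by combining the hypothesis with the defining minimality of $i_0$ and $k_0$.
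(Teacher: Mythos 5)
This theorem is not proved in the paper at all---it is quoted from Bj\"orner--Brenti and attributed to Ehresmann---so there is no internal proof to compare against; your proposal is essentially the classical argument, and its architecture is sound. The forward direction is complete and correct: the formula for $y[i,j]-x[i,j]$ across a cover is exactly right, and chaining along a saturated chain works. In the reverse direction the choice of $i_0$ and $k_0$ is the standard (and correct) one, with one small slip: the cell $(i_0,y(i_0))$ does \emph{not} force $x(i_0)<y(i_0)$, because if $x(i_0)>y(i_0)$ then both permutations contribute $1$ at position $i_0$ for the threshold $j=y(i_0)$ and the inequality there is vacuous. You should instead test a threshold $j$ with $y(i_0)<j\leq x(i_0)$, where position $i_0$ contributes to $x[i_0,j]$ but not to $y[i_0,j]$, giving the contradiction.

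The one genuine gap is the strictness $x[i,j]<y[i,j]$ on the critical rectangle $i_0\leq i<k_0$, $x(i_0)<j\leq x(k_0)$, which you assert ``should be achievable'' but do not prove. It does hold, and the missing bookkeeping uses the minimality of $k_0$ in an essential way. For $i_0<a<k_0$, minimality gives the dichotomy $x(a)\leq x(i_0)$ or $x(a)>y(i_0)$; hence for $j$ in the critical range, $\{a\in(i_0,i] : x(a)\geq j\}\subseteq\{a\in(i_0,i] : x(a)>y(i_0)\}$. Now apply the hypothesis at the cell $(i,\,y(i_0)+1)$: the contributions of positions $a<i_0$ cancel (there $x=y$), and neither $x(i_0)$ nor $y(i_0)$ exceeds $y(i_0)$, so one obtains $|\{a\in(i_0,i]:x(a)>y(i_0)\}|\leq|\{a\in(i_0,i]:y(a)>y(i_0)\}|$. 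Since $y(a)>y(i_0)\geq x(k_0)\geq j$ implies $y(a)\geq j$, and since position $i_0$ contributes an extra $1$ to $y[i,j]$ (as $y(i_0)\geq j>x(i_0)$), summing the three ranges of $a$ gives $x[i,j]\leq y[i,j]-1$, as required. With this lemma in place your induction closes; a slightly cleaner termination argument is to note that each cover step strictly increases $\sum_{i,j}x[i,j]$ (the critical rectangle is nonempty), which is bounded above by $\sum_{i,j}y[i,j]$, so the chain of covers must reach $y$.
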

\noindent According to \cite{BjBr}, this criterion is due to Ehresmann \cite{Eh}. We now give a proof of Theorem \ref{thm:caract}.
\begin{proof}[Proof of \ref{thm:caract}]
Suppose that there exists $i$ with $x_i>y_i$. With the combinatorial characterization of the vector $v_x$ using the graphical representation of $x$ (see Section \ref{sub:vert}) we see that $x[i-1,i+1]=j$ if $x_i=2j$ or if $x_i=2j+1$, and $x[i,i+1]=j$ if $x_i=2j$ and $j+1$ if $x_i=2j+1$. If $x_i$ and $y_i$ have the same parity we then have $x[i,i+1]>y[i,i+1]$. If $x_i$ is even and $y_i$ is odd we have $x[i-1,i+1]>y[i-1,i+1]$. If $x_i$ is odd and $y_i$ is even we have $x[i,i+1]>y[i,i+1]$. Hence by Theorem \ref{thm:brenti} we have $x\not\leq y$.

Conversely, assume that $x_i\leq y_i$ for all $i$. We know that there exist order ideals $p_x$, $p_y$ such that the corresponding vertical labeling gives the vectors $v_x$ and $v_y$. Our assumption implies that $p_x\leq p_y$. Since $\mathcal{S}$-reduced words for $x$ and $y$ are then obtained by taking the product of the labels in a specific order, we conclude that the reduced word for $x$ is a subword of that for $y$, hence that $x\leq y$. 
\end{proof}

\subsection{Proof of the lattice property using the diagonal labeling}
\begin{lemma}
\label{lem:non_max}
Let $\nns \in \NN(\W)$.  If we delete a non-maximal root $e_i-e_j$ from $\nns$ to make the subset $\nns'$, then the word $\bijDD(\nns')$ (after ignoring all letters $e$) is either not reduced or $\bijD(\nns')$ is not a noncrossing partition.
\end{lemma}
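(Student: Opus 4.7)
Write $h := \hgt(\alpha) = j - i$ and $k := \sl(\alpha) = j - 1$. Passing from $\nns$ to $\nns' = \nns \setminus \{\alpha\}$ changes only one entry in the product of Definition~\ref{def:bijA}: the letter $s_k$ at position $(h, k)$ becomes $e$. So $\bijDD(\nns')$, after ignoring $e$'s, is the reduced word $\bijDD(\nns)$ of Proposition~\ref{them:typea} with a single letter deleted; it has length $|\nns'|$, and reducedness is equivalent to $\ell_{\mathcal{S}}(\bijD(\nns')) = |\nns'|$. I proceed by a case split on which cover of $\alpha$ witnesses the non-maximality hypothesis. In type $A$ the covers of $\alpha$ in the root poset are the upper cover $e_i - e_{j+1}$ (label $k+1$, height $h+1$) and the left cover $e_{i-1} - e_j$ (label $k$, height $h+1$), and at least one lies in $\nns$.

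Case 1: the left cover lies in $\nns$. Then the column of label $k$ in $\nns$ contains both $\alpha$ and $e_{i-1} - e_j$, and deleting $\alpha$ leaves a gap in that column at height $h$. I will show by direct inspection of the local structure of $\bijDD(\nns')$ that the element $\bijD(\nns')$ picks up a pair of crossing bumps---between the one associated to $e_{i-1} - e_j$ and one associated to an adjacent root---and hence $\bijD(\nns') \notin \NC(\W, c)$. Case 2: the left cover is absent and only the upper cover $\beta = e_i - e_{j+1}$ witnesses non-maximality. The ideal property forces $e_{i+1} - e_{j+1}\in\nns$ and, when $h > 1$, also $e_{i+1} - e_j\in\nns$, providing letters $s_{k+1}$ in row $h$ and $s_k$ in row $h - 1$. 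Rows $h - 1, h, h + 1$ straddle the two parity blocks of $\bijDD$, and once the row-$h$ letter $s_k$ is deleted, the surviving witness letters assemble, after inverting the odd block and concatenating it with the even block, into a local subword that admits a braid move $s_a s_{a\pm 1} s_a \leftrightarrow s_{a\pm 1} s_a s_{a\pm 1}$ followed by the cancellation of two identical letters. This forces $\ell_{\mathcal{S}}(\bijD(\nns')) < |\nns'|$, so $\bijDD(\nns')$ is not reduced.

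The main obstacle is controlling the letters of $\bijDD$ contributed by other roots of $\nns$ which might interpose between the witness letters and block the local argument. Since simple reflections whose indices differ by at least $2$ commute, letters at labels outside $\{k-1, k, k+1, k+2\}$ can be commuted past the witness letters without affecting the element; the remaining cases either fit the local template above or can be reduced to it by induction on $n$, using the Kreweras equivariance of $\bijD$ established in the proof of Proposition~\ref{them:typea} to push $\alpha$ to the boundary of $\nns$, where only the minimal local configuration around $\alpha$ remains.
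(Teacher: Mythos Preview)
Your plan contains the right ingredients but never actually carries out the local analysis it promises. In Case~1 you only assert that deleting $\alpha$ forces $\bijD(\nns')$ to acquire crossing bumps, and in Case~2 you only assert that a braid move followed by a cancellation becomes available; neither claim is verified. The obstacle you flag in your final paragraph---letters with labels in $\{k-1,k,k+1,k+2\}$ that interpose between your witness letters---is exactly where the work lies, and ``the remaining cases either fit the local template above or can be reduced to it by induction on $n$, using the Kreweras equivariance'' is too vague to constitute an argument. (Note also that failure of $\bijD(\nns')$ to lie in $\NC(\W,c)$ need not manifest as crossing \emph{bumps}: the cycle supports can be noncrossing while a cycle is mis-oriented, e.g.\ $(1,3,4,2)$ in place of $(1,2,3,4)$, so your Case~1 mechanism is not quite the right target.)

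The paper's proof inverts your strategy: rather than attempting a direct local analysis with Kreweras as a fallback, it uses parabolic induction and Kreweras equivariance as the \emph{primary} reductions. If $\nns$ omits a simple root, induct on rank via the parabolic decomposition. If $\nns$ contains every simple root and the deleted root $e_i-e_j$ is not simple, pass to $\KrewNN_c(\nns)$ and delete $e_i-e_{j-1}$ instead, again reducing the rank. This leaves a single base case: $\nns$ contains all simple roots and one deletes the simple root $e_i-e_{i+1}$. That deletion amounts to left-multiplying $\bijD(\nns)$ by the cycle $(1,i+1)$, and one short explicit computation of the resulting cycle decomposition---splitting on whether $i+1$ already lies in the cycle of $\bijD(\nns)$ containing $1$ and $n+1$---shows directly that the result is either not $c$-noncrossing or that the Coxeter length drops by more than one. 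No control of interposing letters in the middle of $\bijDD(\nns)$ is ever needed.
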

\begin{proof}
We conclude the lemma by induction on rank if $\nns$ does not contain all simple roots.  If all simple roots are in $\nns$ and we delete a root $e_i-e_j$ that isn't simple, then by considering $\KrewNN_c(\nns)$ and instead deleting $e_{i}-e_{j-1}$, we again conclude the lemma by induction on rank.

The remaining case is therefore if all simple roots are in $\nns$ and $\nns'$ is obtained by deleting the simple root $e_i-e_{i+1}$.  Let $\bijD(\nns)$ have cycle decomposition \[\bijD(\nns)=(1,i_1,i_2,\ldots,i_k,n+1) \cdots,\] where $1$ and $(n+1)$ appear in the same cycle (by the assumption that all simple roots are in $\nns$) with $1 \leq i_1 \leq i_2 \leq \cdots, i_k \leq n+1$.  Note that deleting $e_i-e_{i+1}$ from $\nns$ is equivalent to multiplying $\bijD(\nns)$ on the left by the cycle $(1,i+1)=s_1s_2 \cdots s_i \cdots s_2s_1$.

Suppose that $i+1 \neq i_p$ for $1 \leq p \leq k$.  We compute the cycle decomposition \[\begin{aligned}\bijD(\nns')&=(1,i+1)\cdot (1,i_1,i_2,\ldots,i_k,n+1) \cdot (i+1,\ldots) \cdot x \\ &=(1,i_1,i_2,\ldots,i_k,n+1,i+1,\ldots)\cdot x,\end{aligned}\] where the cycle containing $i+1$ has been merged with the cycle containing $1$ and $n+1$.  This is manifestly not the cycle decomposition of a $c$-noncrossing partition since $1 < n+1 > i+1$.

Otherwise, $i+1=i_p$ for some $p$.  We compute the cycle decomposition \[\begin{aligned}\bijD(\nns')&=(1,i+1)\cdot (1,i_1,\ldots, i_{p-1},i+1,i_{p+1},\ldots,i_k,n+1) \cdot x \\ &=(1,i_1,\ldots,i_{p-1})(i+1,i_{p+1},n+1)\cdot x,\end{aligned}\] so that $\ls(\bijD(\nns'))=\ls(\bijD(\nns))-(2(i+1-i_{p-1})-1).$ This implies that the word $\bijDD(\nns')$, which has $\ls(\bijD(\nns))-1$ letters, is not reduced---unless $i=i_{p-1}$.  But if $i=i_{p-1},$ then it must have been that $e_{i-1}-e_{i}$ and $e_i-e_{i+1}$ were in $\nns$, but $e_{i-1}-e_{i+1}$ was not, in which case $e_i-e_{i+1}$ was maximal.
\end{proof}

The following proposition now gives an alternative proof of Theorem \ref{thm:caract}:
\begin{proposition}
The bijection $\bijD$ is an isomorphism from the distributive lattice $NN(\W)$ to Bruhat order on $\NC(\W, c)$.
\end{proposition}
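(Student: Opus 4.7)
The plan is to establish the equivalence $p \subseteq p' \iff \bijD(p) \leq \bijD(p')$ in Bruhat order. The forward direction will be obtained by comparing the canonical words $\bijDD(p)$ and $\bijDD(p')$ position by position: for each index $(i,j)$ with $j \geq i$, the product defining $\bijDD(p)$ has either $s_j$ (if the root at height $i$ and label $j$ belongs to $p$) or $e$, and similarly for $p'$; since $p \subseteq p'$, every non-identity letter of $\bijDD(p)$ occurs at the same position in $\bijDD(p')$. The final inversion on the odd-row block preserves this subword relation because simple reflections are self-inverse and reversing a word is compatible with subword inclusion. Both $\bijDD(p)$ and $\bijDD(p')$ being reduced (Proposition~\ref{them:typea}), the Subword Characterization of Bruhat order delivers $\bijD(p) \leq \bijD(p')$.

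For the backward direction, I will induct on $|p'|$. The case $|p'|=0$ is trivial. In the inductive step, assuming $\bijD(p) \leq \bijD(p')$ with $p \neq p'$, my aim is to produce a root $\alpha$ maximal in $p'$ such that $\bijD(p) \leq \bijD(p' \setminus \{\alpha\})$; the inductive hypothesis applied to the order ideal $p' \setminus \{\alpha\}$ will then give $p \subseteq p' \setminus \{\alpha\} \subseteq p'$. To locate $\alpha$: by the Strong Exchange Property applied to the reduced expression $\bijDD(p')$, there is a proper subset $J$ of positions whose induced subword is reduced and equal to $\bijD(p)$. For any position $m \in J^c$, deleting the letter at position $m$ yields exactly the word $\bijDD(p' \setminus \{\alpha_m\})$, which still contains $\prod_J$ as a subword and therefore evaluates to an element majorizing $\bijD(p)$; by Lemma~\ref{lem:non_max}, this word is a reduced expression for a noncrossing partition precisely when $\alpha_m$ is maximal in $p'$. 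Hence the induction closes whenever some position in $J^c$ corresponds to a maximal root.

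The main obstacle is to exclude the pathological case in which every reduced subword of $\bijDD(p')$ for $\bijD(p)$ already occupies every maximal-root position---equivalently, in which $\bijD(p) \not\leq \bijD(p'\setminus\{\alpha\})$ for every maximal $\alpha$ of $p'$. I expect to handle this by exploiting the flexibility of reduced subwords in the canonical expression: the braid and commutation relations among simple reflections should allow me, given one subword $J$, to produce another $J'$ for the same element in which a maximal-row letter at a position in $J$ is swapped with a repeat of the same simple reflection appearing at some other position, freeing up the maximal-root slot. The row-factored structure of $\bijDD(p')$ (even rows, then the inverse of the odd rows) should make such swaps local and tractable. Should a direct argument prove unwieldy, a clean alternative is to appeal to Theorem~\ref{thm:caract} (already proved in this section via the vertical labeling) combined with the identification $\bijV = \bijD$ from Theorem~\ref{thm:all_equivalent}, which yields the backward direction at once, at the cost of bypassing the diagonal-labeling viewpoint that motivates this subsection.
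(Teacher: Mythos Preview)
Your forward direction is correct and matches the paper's argument in spirit.

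For the backward direction, the paper takes a different route: it works only with \emph{cover relations} $u \lessdot w$ in Bruhat order (both $u, w \in \NC(\W,c)$), rather than general relations. Write $w = b_1 \cdots b_k$ via $\bijDD(\bijD^{-1}(w))$. Strong exchange gives an index $i$ with $u = b_1 \cdots \hat{b}_i \cdots b_k$; since $\ell(u) = k-1$, this word is automatically reduced. Now the decisive point: $u$ is noncrossing \emph{by hypothesis}, so the contrapositive of Lemma~\ref{lem:non_max} forces $b_i$ to correspond to a maximal root of $\bijD^{-1}(w)$. Hence $\bijD^{-1}(u) = \bijD^{-1}(w) \setminus \{\alpha_i\}$, a cover in $\NN(\W)$. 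Since $\bijD$ and $\bijD^{-1}$ both preserve covers, the bijection is a poset isomorphism.

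This is precisely where your inductive approach stalls: you must \emph{produce} a noncrossing $v$ with $\bijD(p) \leq v \lessdot \bijD(p')$, which amounts to finding a reduced subword for $\bijD(p)$ that avoids some maximal-root position---the obstacle you flag. In the cover setting this issue evaporates, because the element $u$ arrives already noncrossing and reducedness is free from the length count; Lemma~\ref{lem:non_max} then does all the work. Your proposed braid-move fix is not fleshed out and does not look straightforward; the fallback to Theorem~\ref{thm:caract} via $\bijV = \bijD$ is correct but, as you note, abandons the diagonal-labeling viewpoint this subsection is meant to supply.
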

\begin{proof}
Suppose that $I \lessdot J \in \NN(\W)$ is a cover relation.  Then by the definition of $\bijD$ and covers in $\NN(\W)$, we can write $\bijD(J)=b_1 b_2 \cdots b_i \cdots b_k$ with $b_j \in S$ and $\bijD(I)=b_1 b_2 \cdots \hat{b}_i \cdots b_k,$ where the circumflex denotes the deletion of the simple reflection.  By Theorem~\ref{them:typea}, both expressions are reduced, so we have 
\[\ls(\bijD(J))=\ls(\bijD(I))+1,\] and we can write 
\[\begin{aligned}
	\bijD(J)=b_1 b_2 \cdots b_i b_{i+1} \cdots b_k &= b_1 b_2 \cdots \hat{b}_i b_{i+1} \cdots b_k (b_k \cdots b_{i+1} b_i b_{i+1} \cdots b_k)\\&=\bijD(I)(b_k \cdots b_{i+1} b_i b_{i+1} \cdots b_k),
\end{aligned}\] so that $\bijD(I)\lessdot \bijD(J)$ in the Bruhat order.

Now suppose that $u \lessdot w$ with $ut = w$ is a cover relation in Bruhat order with $u,w \in \NC(\W,c)$.
From the bijection $\bijDD(\bijD^{-1}(w))$, we may factor $w$ into simple reflections corresponding to the roots in its nonnesting partition, $w = b_1 b_2 \cdots b_k$.  By the strong exchange property, $wt = b_1 \cdots \hat{b}_i \cdots b_k$ is a reduced word for $u$.  By assumption and using Lemma~\ref{lem:non_max}, the only possibilities are those $b_i$ corresponding to maximal elements of $\bijD^{-1}(w)$.  Therefore $\bijD^{-1}(u) \lessdot \bijD^{-1}(w)$ in $\NN(\W)$.
\end{proof}

\subsection{Covering relations}\label{sect:covering}

For $x,y\in\NC(\W, c)$, it follows from Theorem \ref{thm:caract} that $x$ is covered by $y$ in the Bruhat order if and only if $x<y$ and $\ell_{\mathcal{S}}(x)=\ell_{\mathcal{S}}(y)-1$: indeed, it is equivalent to say that the corresponding order ideals must satisfy $p_x<p_y$ with $p_y$ having one more element than $p_x$ and we now that the products of the labels in some order give reduced expressions for $x$ and $y$. In particular, the rank function is simply the Coxeter length. Theorem \ref{thm:caract} allows us to read off the covering relations on the nonnesting partitions side. The aim of this subsection is to describe the covering relations directly on $\NC(\W,c)$. To this end, we consider the combinatorial interpretation of $v_x$ using the graphical representation of $x$ as in Section \ref{sub:vert}. 

\begin{lemma}\label{lem:ancrage}
Let $x,y\in\NC(\W,c)$. Then $x$ is covered by $y$ if and only if $x$ is obtained from $y$ by one of the following operations:
\begin{itemize}
\item Replace a cycle of the form $(i_1,\dots,k, k+1,\dots, i_\ell)$ by the product of two cycles $(i_1,\dots,k)(k+1,\dots, i_\ell)$,
\item Replace a cycle of the form $(i_1,\dots, i_j, i_{j+1},\dots, i_\ell)$ where $i_j<k<i_{j+1}$ by the cycle $(i_1,\dots, i_j,k, i_{j+1},\dots, i_\ell)$.
\end{itemize}
\end{lemma}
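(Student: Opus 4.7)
The plan is to translate the cover relation $x \lessdot y$ in Bruhat order on $\NC(\W,c)$ into the vector language of Section~\ref{sub:vert}. By Theorem~\ref{thm:caract} and the observation opening Section~\ref{sect:covering} that the rank function on $\NC(\W,c)$ is Coxeter length $\ls$, which equals the sum of the coordinates of $v_z$, one has $x \lessdot y$ if and only if $v_y = v_x + e_k$ for exactly one index $k \in [n]$. The proof therefore splits into verifying that each of the two combinatorial operations realizes such a vector difference (forward direction), and showing conversely that every cover arises from one of the two operations (reverse direction).

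For the forward direction I would compute $v_y - v_x$ directly from the combinatorial formula $\sll_i(\nns_z) = 2\,\Nest_z(i) + \mathbbm{1}_{i \in D_z}$ derived in Section~\ref{sub:vert}. For operation~(1), splitting the polygon $(i_1,\dots,k,k+1,\dots,i_\ell)$ at the edge $(k,k+1)$ affects only the $D$-membership of $k$: it is non-terminal in $y$ but terminal in the left piece in $x$, while every nesting count is unchanged. Hence $\sll_k$ drops by one and no other coordinate changes. For operation~(2), $k$ is a fixed point of $y$ nested in the polygon $(i_1,\dots,i_\ell)$, and in $x$ becomes a non-terminal vertex of $(i_1,\dots,i_j,k,i_{j+1},\dots,i_\ell)$: it enters $D_x$ while losing exactly one nesting, again giving $v_y = v_x + e_k$. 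One must also check that the indices strictly between $i_j$ and $i_{j+1}$ distinct from $k$ remain nested in the enlarged polygon $P_x$ just as they were in $P_y$, which is immediate from the definition of nesting.

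For the reverse direction, fix a cover $x \lessdot y$ and let $k$ be the unique index with $v_y = v_x + e_k$. I would split by the parity of $y_k$. If $y_k$ is odd, then $k \in D_y \setminus D_x$ with $\Nest_y(k) = \Nest_x(k)$, so $k$ is a non-terminal vertex $j_m$ of some polygon $P_y=(j_1,\dots,j_r)$ in $y$; the goal is to prove $j_{m+1} = k+1$, which forces $x$ to be the split of $P_y$ along the edge $(k,k+1)$ as in operation~(1). If $y_k$ is even, then $k\notin D_y$, $k\in D_x$, and $\Nest_y(k) = \Nest_x(k)+1$, so $k$ is a fixed point of $y$ nested in exactly one more polygon than in $x$; the goal is to locate the unique consecutive pair $(i_j,i_{j+1})$ of vertices of that polygon with $i_j<k<i_{j+1}$ and verify that $x$ has polygon $(i_1,\dots,i_j,k,i_{j+1},\dots,i_\ell)$, recovering operation~(2).

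The main obstacle is closing each of these two cases, that is, pinning down the local polygon structure of $y$ around $k$ from purely numerical data. I expect to invoke the adjacency constraints of Proposition~\ref{prop:bij}: requiring both $v_x$ and $v_y = v_x + e_k$ to lie in $\Vn$ imposes parity-based restrictions on the pairs $(x_{k-1},x_k)$, $(x_k,x_{k+1})$, $(y_{k-1},y_k)$, and $(y_k,y_{k+1})$, and a short case analysis on these pairs (combined with the characterization of $U_y$ in Lemma~\ref{lem:parity}, which controls whether $k+1$ can lie in a polygon different from the one containing $k$) rules out all configurations other than the ones described by the two operations. Once the local structure has been identified, Lemma~\ref{cor:injective} (injectivity of $z \mapsto v_z$) makes the conclusion automatic.
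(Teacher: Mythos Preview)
Your approach is essentially the paper's: the same parity split (the paper phrases it as ``$x_k$ odd/even'' rather than ``$y_k$ even/odd''), the same use of the formula $\sll_i = 2\Nest(i)+\mathbbm{1}_{i\in D}$, and the same appeals to Lemma~\ref{lem:parity} and Lemma~\ref{cor:injective}. One refinement to watch: in your ``$y_k$ even'' case, $k\notin D_y$ does not by itself force $k$ to be a fixed point of $y$---the paper first rules out the subcase $k\in U_y$ (i.e., $k$ terminal in some polygon of $y$) by deriving a contradiction from Lemma~\ref{lem:parity} applied at indices $k-1,k$, and only then concludes that $k$ is a fixed point nested in a polygon of $y$.
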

\begin{proof}
The fact that both operations correspond to a covering relation in the Bruhat order is clear: in both cases, a copy of $s_k$ in the standard form $m_y$ of $y$ is deleted, and the resulting word yields the standard form $m_x$ of $x$ up to commutation. 

Now assume that $y$ covers $x$. We argue using the vertical vectors and their properties from Section \ref{sub:vert}. There is a unique $k$ such that $x_k\neq y_k$ and $y_k=x_k+1$. If $x_k$ is odd and $k\notin U_y$ consider the reflection $(i,j)$, $j>i$, with smallest $j-i$ and such that $(i,j)$ is an edge of a polygon $P$ of $y$ in which $k$ is nested (it always exists since $y_k$ is even and $y_k>0$). It suffices to add the vertex $k$ to the polygon $P$ to obtain an element $x'\in \NC(\W,c)$ satisfying $v_{x'}=v_x$ hence $x=x'$ by Corollary \ref{cor:injective}; this is possible since $k$ lies neither in $U_y$ nor in $D_y$. We replaced a cycle of the form $(\dots, i,j,\dots)$ in $y$ by the cycle $(\dots, i, k, j,\dots)$. If $x_k$ is odd and $k\in U_y$, we prove that $k-1\notin D_x$: if $k-1\in D_x$ then $k-1\in D_y$ hence $k-1$ and $k$ lie in the same polygon $P$ of $y$ with $k$ terminal (because $y_k$ is even) hence $$x_{k-1}=y_{k-1}=y_k+1=x_k+2$$ which by Lemma \ref{lem:parity} is impossible since both $x_{k-1}$ and $x_k$ are odd. But $k-1\notin D_x$ if and only if $k-1\notin D_y$, which implies that $y_{k-1}$ is even with $y_{k-1}>y_k$ since $k-1$ is nested in the polygon of $y$ having $k$ as vertex (because $k\in U_y$). But $y_k$ and $y_{k-1}$ are both even, hence we have a contradiction with Lemma \ref{lem:parity} again since $$x_{k-1}=y_{k-1}= y_k+2=x_k+3.$$ 

If $x_k$ is even, then $y_k$ is odd. In that case, there exists a polygon $P$ of $y$ such that $k$ is a non terminal vertex of $P$. Let $\ell$ be the vertex of $P$ following $k$. If $\ell\neq k+1$, one has a contradiction with Lemma \ref{lem:parity} since \[x_{k+1}=y_{k+1}\geq y_k+1=x_k+2.\] If $\ell=k+1$, splitting the polygon $P$ into two polygons by removing the edge $(k, k+1)$ yields an element $x'\in \NC(\W,c)$. This replaces a cycle $(\dots, k, k+1,\dots)$ from $y$ by the product of the cycles $(\dots, k)(k+1,\dots)$.
\end{proof}

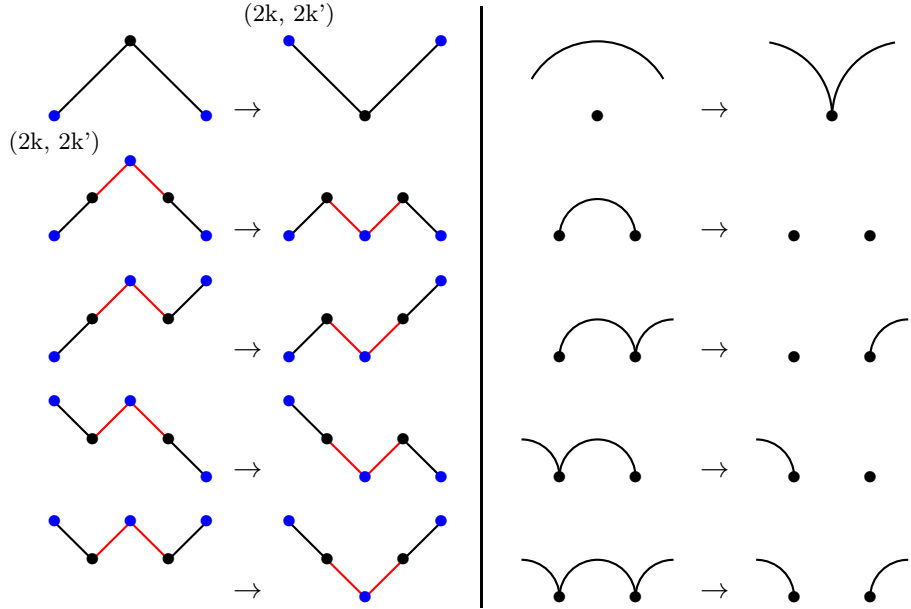
\begin{figure}[htbp]
\begin{center}
\begin{tabular}{cccc|cccc}
& & & & & & &\\
\begin{pspicture}(0,0)(2,1)
\psline(0,0)(1,1)
\psline(1,1)(2,0)
\rput(0,0){{\blue{$\bullet$}}}
\rput(1,1){$\bullet$}
\rput(2,0){{\blue{$\bullet$}}}
\rput(0,-0.35){\textrm{{\footnotesize (2k, 2k')}}}
\end{pspicture} & $\rightarrow$ & 
\begin{pspicture}(0,0)(2,1)
\psline(0,1)(1,0)
\psline(1,0)(2,1)
\rput(0,1){{\blue{$\bullet$}}}
\rput(1,0){$\bullet$}
\rput(2,1){{\blue{$\bullet$}}}
\rput(0,1.35){\textrm{{\footnotesize (2k, 2k')}}}
\end{pspicture} & & & \begin{pspicture}(0,0)(2,1)
\psarc(1,0){1}{30}{150}
\rput(1,0){$\bullet$}
\end{pspicture} & $\rightarrow$ & 
\begin{pspicture}(0,0)(2,1)
\psarc(0,0){1}{0}{80}
\psarc(2,0){1}{100}{180}
\rput(1,0){$\bullet$}
\end{pspicture}\\
& & & & & & &\\
\begin{pspicture}(0,0)(2,1)
\psline(0,0)(0.5,0.5)
\psline[linecolor=red](0.5,0.5)(1,1)
\psline[linecolor=red](1,1)(1.5,0.5)
\psline(1.5,0.5)(2,0)
\rput(0,0){{\blue{$\bullet$}}}
\rput(1,1){{\blue{$\bullet$}}}
\rput(0.5,0.5){$\bullet$}
\rput(1.5,0.5){$\bullet$}
\rput(2,0){{\blue{$\bullet$}}}
\end{pspicture} & $\rightarrow$ & \begin{pspicture}(0,0)(2,1)
\psline(0,0)(0.5,0.5)
\psline[linecolor=red](0.5,0.5)(1,0)
\psline[linecolor=red](1,0)(1.5,0.5)
\psline(1.5,0.5)(2,0)
\rput(0,0){{\blue{$\bullet$}}}
\rput(1.5,0.5){$\bullet$}
\rput(0.5,0.5){$\bullet$}
\rput(1,0){{\blue{$\bullet$}}}
\rput(2,0){{\blue{$\bullet$}}}
\end{pspicture}& & & \begin{pspicture}(0,0)(2,1)
\psarc(1,0){0.5}{0}{180}
\rput(0.5,0){$\bullet$}
\rput(1.5,0){$\bullet$}
\end{pspicture} & $\rightarrow$ & \begin{pspicture}(0,0)(2,1)
\rput(0.5,0){$\bullet$}
\rput(1.5,0){$\bullet$}
\end{pspicture}\\
& & & & & & &\\
\begin{pspicture}(0,0)(2,1)
\psline(0,0)(0.5,0.5)
\psline[linecolor=red](0.5,0.5)(1,1)
\psline[linecolor=red](1,1)(1.5,0.5)
\psline(1.5,0.5)(2,1)
\rput(0,0){{\blue{$\bullet$}}}
\rput(1,1){{\blue{$\bullet$}}}
\rput(0.5,0.5){$\bullet$}
\rput(1.5,0.5){$\bullet$}
\rput(2,1){{\blue{$\bullet$}}}
\end{pspicture} & $\rightarrow$ & \begin{pspicture}(0,0)(2,1)
\psline(0,0)(0.5,0.5)
\psline[linecolor=red](0.5,0.5)(1,0)
\psline[linecolor=red](1,0)(1.5,0.5)
\psline(1.5,0.5)(2,1)
\rput(0,0){{\blue{$\bullet$}}}
\rput(1.5,0.5){$\bullet$}
\rput(0.5,0.5){$\bullet$}
\rput(1,0){{\blue{$\bullet$}}}
\rput(2,1){{\blue{$\bullet$}}}
\end{pspicture}& & & \begin{pspicture}(0,0)(2,1)
\psarc(1,0){0.5}{0}{180}
\psarc(2,0){0.5}{90}{180}
\rput(0.5,0){$\bullet$}
\rput(1.5,0){$\bullet$}
\end{pspicture} & $\rightarrow$ & \begin{pspicture}(0,0)(2,1)
\rput(0.5,0){$\bullet$}
\rput(1.5,0){$\bullet$}
\psarc(2,0){0.5}{90}{180}
\end{pspicture}\\
& & & & & & &\\
\begin{pspicture}(0,0)(2,1)
\psline(0,1)(0.5,0.5)
\psline[linecolor=red](0.5,0.5)(1,1)
\psline[linecolor=red](1,1)(1.5,0.5)
\psline(1.5,0.5)(2,0)
\rput(0,1){{\blue{$\bullet$}}}
\rput(1,1){{\blue{$\bullet$}}}
\rput(0.5,0.5){$\bullet$}
\rput(1.5,0.5){$\bullet$}
\rput(2,0){{\blue{$\bullet$}}}
\end{pspicture} & $\rightarrow$ & \begin{pspicture}(0,0)(2,1)
\psline(0,1)(0.5,0.5)
\psline[linecolor=red](0.5,0.5)(1,0)
\psline[linecolor=red](1,0)(1.5,0.5)
\psline(1.5,0.5)(2,0)
\rput(0,1){{\blue{$\bullet$}}}
\rput(1.5,0.5){$\bullet$}
\rput(0.5,0.5){$\bullet$}
\rput(1,0){{\blue{$\bullet$}}}
\rput(2,0){{\blue{$\bullet$}}}
\end{pspicture}& & & \begin{pspicture}(0,0)(2,1)
\psarc(0,0){0.5}{0}{90}
\psarc(1,0){0.5}{0}{180}
\rput(0.5,0){$\bullet$}
\rput(1.5,0){$\bullet$}
\end{pspicture} & $\rightarrow$ & \begin{pspicture}(0,0)(2,1)
\rput(0.5,0){$\bullet$}
\rput(1.5,0){$\bullet$}
\psarc(0,0){0.5}{0}{90}
\end{pspicture}\\
& & & & & & &\\
\begin{pspicture}(0,0)(2,1)
\psline(0,1)(0.5,0.5)
\psline[linecolor=red](0.5,0.5)(1,1)
\psline[linecolor=red](1,1)(1.5,0.5)
\psline(1.5,0.5)(2,1)
\rput(0,1){{\blue{$\bullet$}}}
\rput(1,1){{\blue{$\bullet$}}}
\rput(0.5,0.5){$\bullet$}
\rput(1.5,0.5){$\bullet$}
\rput(2,1){{\blue{$\bullet$}}}
\end{pspicture} & $\rightarrow$ & \begin{pspicture}(0,0)(2,1)
\psline(0,1)(0.5,0.5)
\psline[linecolor=red](0.5,0.5)(1,0)
\psline[linecolor=red](1,0)(1.5,0.5)
\psline(1.5,0.5)(2,1)
\rput(0,1){{\blue{$\bullet$}}}
\rput(1.5,0.5){$\bullet$}
\rput(0.5,0.5){$\bullet$}
\rput(1,0){{\blue{$\bullet$}}}
\rput(2,1){{\blue{$\bullet$}}}
\end{pspicture}& & & \begin{pspicture}(0,0)(2,1)
\psarc(0,0){0.5}{0}{90}
\psarc(1,0){0.5}{0}{180}
\psarc(2,0){0.5}{90}{180}
\rput(0.5,0){$\bullet$}
\rput(1.5,0){$\bullet$}
\end{pspicture} & $\rightarrow$ & \begin{pspicture}(0,0)(2,1)
\rput(0.5,0){$\bullet$}
\rput(1.5,0){$\bullet$}
\psarc(0,0){0.5}{0}{90}
\psarc(2,0){0.5}{90}{180}
\end{pspicture}\\
\end{tabular}
\end{center}
\caption{Covering relations in the lattice of Dyck paths and the corresponding relations on noncrossing partitions. The parts in red are the parts which are changed. The points in blue are the points with even coordinates, that is, corresponding to the beginning or end of a step.}
\label{figure:couverture}
\end{figure}

\section{Extensions}\label{sec:extensions}

\subsection{Changing the Coxeter element}\markboth{CHANGING THE COXETER ELEMENT}{}

\subsubsection{Failure of the lattice property}\label{sub:fail}
The property that noncrossing partitions from a lattice under Bruhat order is specific to both type $A$ and to the linear Coxeter element $c$.  For example, $\NC(A_3, c')$ with  $c'=s_1 s_3 s_2$ does not form a lattice under Bruhat order: there are two maximal elements $x=s_3 s_2 s_1 s_2 s_3$ and $y=s_2 s_1 s_3 s_2$ in the poset (see Figure \ref{figure:a3}).  Similarly, $\NC(B_2,st)$ does not give a lattice (see Figure \ref{figure:b}). 

\begin{figure}[htbp]
\begin{pspicture}(-2,0)(10,6)
\psline(1.8,0.2)(1.2,0.8)
\psline(1.2,1.2)(1.8,1.8)
\psline(2.8,1.2)(2.2,1.8)
\psline(2.8,2.2)(2.2,2.8)
\psline(2.8,4.2)(2.2,4.8)
\psline(0.8,2.2)(0.2,2.8)
\psline(0.8,3.8)(0.2,3.2)
\psline(3.2,2.2)(3.8,2.8)
\psline(3.8,3.2)(3.2,3.8)
\psline(2,0.3)(2,0.7)
\psline(2,2.3)(2,2.7)
\psline(2,5.3)(2,5.7)
\psline(1,1.3)(1,1.7)
\psline(3,1.3)(3,1.7)
\psline(2.2,0.2)(2.8,0.8)
\psline(1.8,1.2)(1.2,1.8)
\psline(1.8,3.2)(1.2,3.8)
\psline(2.2,1.2)(2.8,1.8)
\psline(2.2,3.2)(2.8,3.8)
\psline(1.2,2.2)(1.8,2.8)
\psline(1.2,4.2)(1.8,4.8)
\rput(2,0){$e$}
\rput(1,1){$s_1$}
\rput(2,1){$s_2$}
\rput(3,1){$s_3$}
\rput(1,2){$s_1 s_2$}
\rput(2,2){$s_1 s_3$}
\rput(3,2){$s_2 s_3$}
\rput(2,3){$s_1 s_2 s_3$}
\rput(0,3){$s_2 s_1 s_2$}
\rput(4,3){$s_3s_2s_3$}
\rput(1,4){$s_2s_1s_2s_3$}
\rput(3,4){$s_1 s_3s_2s_3$}
\rput(2,5){$s_3s_2s_1s_2s_3$}
\rput(2,6){$s_2s_3s_2s_1s_2s_3$}
\psline(7.8,0.2)(7.2,0.8)
\psline(7.2,1.2)(7.8,1.8)
\psline(8.8,1.2)(8.2,1.8)
\psline(8.8,2.2)(8.2,2.8)
\psline(8.8,4.2)(8.2,4.8)
\psline(6.8,2.2)(6.2,2.8)
\psline(8.8,3.8)(8.2,3.2)
\psline(9.2,2.2)(9.8,2.8)
\psline(9.8,3.2)(9.2,3.8)
\psline(8,0.3)(8,0.7)
\psline(8,2.3)(8,2.7)
\psline(7,1.3)(7,1.7)
\psline(6.8,4.2)(7.65,5.8)
\psline(9.2,4.2)(8.35,5.8)
\psline(8.2,0.2)(8.8,0.8)
\psline(7.8,1.2)(7.2,1.8)
\psline(7.8,3.2)(7.2,3.8)
\psline(8.2,1.2)(8.8,1.8)
\psline(6.2,3.2)(6.8,3.8)
\psline(7.2,2.2)(7.8,2.8)
\psline(7.2,4.2)(7.8,4.8)
\psline(9,1.3)(9,1.7)
\rput(8,0){$e$}
\rput(7,1){$s_1$}
\rput(8,1){$s_2$}
\rput(9,1){$s_3$}
\rput(7,2){$s_2 s_1$}
\rput(8,2){$s_1 s_3$}
\rput(9,2){$s_2 s_3$}
\rput(8,3){$s_2 s_1 s_3$}
\rput(6,3){$s_2 s_1 s_2$}
\rput(10,3){$s_3s_2s_3$}
\rput(7,4){$s_2s_1s_2s_3$}
\rput(9,4){$s_3s_2s_3s_1$}
\rput(8,5){$s_2s_1s_3s_2$}
\rput(8,6){$s_3s_2s_1s_2s_3$}
\end{pspicture}
\caption{On the left is the Hasse digram of the restriction of Bruhat order to $\NC(\mathfrak{S}_4,s_1s_2s_3)$.  On the right is the restriction of Bruhat order to  $\NC(\mathfrak{S}_4,s_2s_1s_3)$.}
\label{figure:a3}
\end{figure}
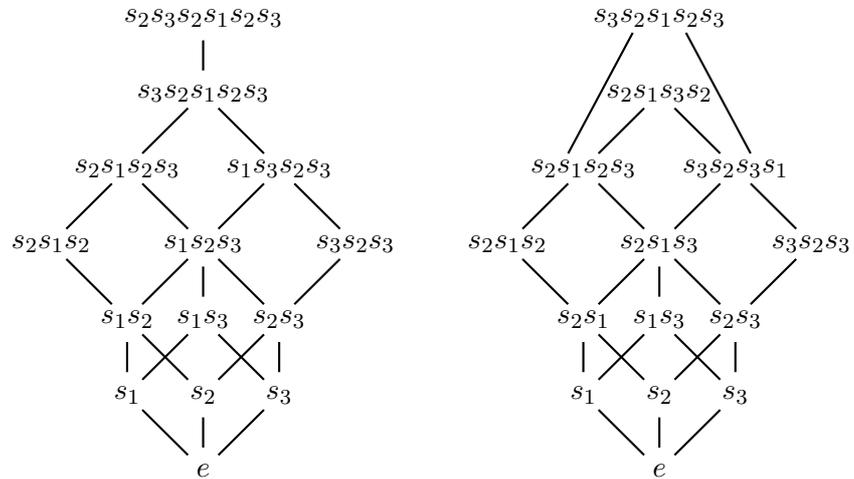
\begin{figure}[htbp]
$$\xymatrix{
sts \ar@{-}[rd] & & tst \ar@{-}[ld] &\\
 & st \ar@{-}[ld] \ar@{-}[rd] &\\
 s \ar@{-}[rd]& & t \ar@{-}[ld] \\
& e &}$$
\caption{The Hasse diagram of the restriction of Bruhat order to $\NC(B_2,st)$.}
\label{figure:b} 
\end{figure}
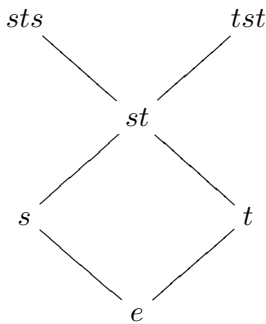

There are representation-theoretic reasons for believing that there ought to be an order on $\NC(\W, c')$ isomorphic to $(\NC(\W, c),\leq)$, as we now explain.  One natural base of the Temperley-Lieb algebra is the diagram basis; another is the basis given by the images of the simple elements of the dual braid monoid for linear $c$ (which are naturally indexed by elements of $\NC(\W,c)$).  Ordering the latter by Bruhat order conveniently induces an upper-triangular change-of-basis matrix to the former. 

But a basis arising from the dual braid monoid can be defined for an \emph{arbitrary} Coxeter element $c'$---as the image of the simple elements, this basis is now indexed by elements of $\NC(\W, c')$.  It turns out that to pass to the diagram basis, one can still find an order on $\NC(\W,c')$ that produces an upper-triangular change-of-basis matrix (see \cite{GobTh}).  Surprisingly, the partial order on $\NC(\W,c')$ required to obtain triangularity is isomorphic to the Bruhat order on $\NC(\W,c)$.  The goal of this section is to combinatorially define this order on $\NC(\W, c')$ by finding a bijection with $(\NC(\W,c),\leq)$.  We emphasize that our bijection will fix the set of reflections $\T$---in particular, it is a different bijection from the standard one given by conjugation.



Since the Bruhat order on $\NC(\W,c')$ no longer gives the correct order required for the triangularity above, we substitute the set $\Vn$ of vertical vectors under inclusion as a replacement.  More precisely, for an arbitrary Coxeter element $c'$, we define standard forms for elements of $\NC(\W,c')$, and recover the set $\Vn$ by counting copies of $s_i$ in these standard forms---generalizing the definition of $\Vn$ in Section~\ref{sec:nc_algebraic} for $c=s_1s_2\cdots s_n$.

The key is that we no longer insist that standard forms be \emph{reduced} words.

\begin{example}
\label{ex:mot1}
As a motivating example, consider type $\mathfrak{S}_4$ with $c'=s_2 s_1 s_3$ and $x'=s_2 s_1 s_3 s_2$.  Define the standard form of $x'$ to be the \emph{unreduced} word $(s_2 s_1 s_2) (s_3 s_2 s_3)$, so that $v_{x'} = (1,3,2)$.  Figure~\ref{figure:a3} gives all other standard forms for elements of $\NC(\W,c')$ (these are largely forced).  Then the ordering on $\NC(\W, c')$ induced by the vectors $\Vn$---obtained by counting copies of $s_i$---under componentwise order is isomorphic to $(\NC(\W,s_1s_2s_3), \leq)$.  
\end{example}

We will define general standard forms in the same way as we did for linear $c$: we first specify a canonical $\T$-reduced expression, and then replace each reflection by a specific $\mathcal{S}$-reduced expression.

\subsubsection{Geometric representation of noncrossing partitions for arbitrary Coxeter elements}\label{sub:orientation}

Let $c'$ be an arbitrary Coxeter element. As before, there is a graphical representation of elements of $\NC(\W, c')$ as disjoint unions of polygons whose vertices are given by labeled points around a circle. If we write the Coxeter element as $c'=(i_1, i_2,\dots,i_{n+1})$ with $i_1=1$, then the labels $1=i_1, i_2,\dots, i_{n+1}$ occur around the circle in clockwise order.  An example is given in Figure \ref{figure:orientation}.   Recall that not all $(n+1)$-cycles correspond to Coxeter elements---Lemmas~\ref{lemme1} characterizes which cycles can occur.


\begin{lemma}\label{lemme1}
An $(n+1)$-cycle $(i_1,\dots,i_{n+1})$ with $i_1=1$ and $i_k=n+1$ corresponds to a standard Coxeter element $c'$ iff $1=i_1<i_2<\dots <i_k$ and $1=i_1<i_{n+1}<i_{n}<\dots<i_{k+1}<n+1$.
\end{lemma}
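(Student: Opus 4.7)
The plan is to prove both implications through an induction on $n$ together with a short counting argument. The key ingredient is that $s_n$ commutes with every $s_i$ for $i\leq n-2$, so in any reduced expression $s_{\pi(1)}\cdots s_{\pi(n)}$ for a standard Coxeter element $c'$, the unique occurrence of $s_n$ may always be commuted to one end of the word: to the rightmost position if $s_n$ lies to the right of $s_{n-1}$, or to the leftmost position otherwise. This produces a factorization $c'=c_0 s_n$ or $c' = s_n c_0$, where $c_0$ is a standard Coxeter element of the parabolic subgroup $\langle s_1,\ldots,s_{n-1}\rangle \cong \mathfrak{S}_n$ that fixes $n+1$. The base case $n=1$ is immediate since the only possibility is $c'=s_1=(1,2)$.

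For the inductive step, by the inductive hypothesis $c_0$ has cycle structure $(1,a_1,\ldots,a_{p'},n,b_{q'},\ldots,b_1)$ on $\{1,\ldots,n\}$ of the prescribed form, with $1<a_1<\cdots<a_{p'}<n$ and $1<b_1<\cdots<b_{q'}<n$. A direct computation then shows that in the case $c'=c_0 s_n$, the reflection $s_n$ swaps $n$ and $n+1$ before $c_0$ acts, so the resulting cycle is obtained from that of $c_0$ by inserting $n+1$ immediately after $n$, yielding $(1,a_1,\ldots,a_{p'},n,n+1,b_{q'},\ldots,b_1)$ with $k=p'+3$. In the case $c'=s_n c_0$, the reflection $s_n$ acts after $c_0$ and swaps the image $n$ (coming from $a_{p'}$) with $n+1$, inserting $n+1$ immediately before $n$ to produce $(1,a_1,\ldots,a_{p'},n+1,n,b_{q'},\ldots,b_1)$ with $k=p'+2$. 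In both cases the required monotonicity on either side of $n+1$ is preserved, so the cycle has the claimed form.

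For the converse I plan to use a counting argument. The number of distinct standard Coxeter elements in $\mathfrak{S}_{n+1}$ equals $2^{n-1}$: such an element is determined by the relative order of each adjacent pair $s_{j-1},s_j$ in any reduced word (equivalently, by an orientation of the $n-1$ edges of the Dynkin diagram of type $A_n$). On the other side, the cycles of the prescribed form are parametrized by the subset $\{i_2,\ldots,i_{k-1}\}\subseteq\{2,\ldots,n\}$, which gives $2^{n-1}$ cycles as well. Since distinct permutations have distinct cycle decompositions, the map from Coxeter elements to cycles is injective, and by the forward direction its image is contained in the set of cycles of the prescribed form; an injection between two finite sets of the same cardinality is a bijection, which establishes the converse. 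The main technical obstacle is verifying that the commutation-reduction to $c'=c_0 s_n$ or $c'=s_n c_0$ is always available, which reduces to the fact that $s_n$ commutes with every simple reflection other than $s_{n-1}$, so its unique occurrence in a reduced word can always be pushed to the appropriate end.
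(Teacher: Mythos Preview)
Your proof is correct, but it takes a genuinely different route from the paper's.

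For the forward direction, the paper argues directly: knowing that $c'(1)=i_2$, it deduces from the single-occurrence of each $s_j$ that $s_{i_2-1}$ must precede $s_{i_2}$ in any word for $c'$ (when $k\neq 2$), which forces $i_3=c'(i_2)>i_2$, and iterates. Your argument instead runs an induction on $n$, peeling off $s_n$ (rather than $s_1$) and computing the resulting cycle explicitly. Both are valid; the paper's direct argument is slightly more self-contained, while yours packages everything into one inductive framework.

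For the converse, the paper gives a constructive induction: it observes that a cycle of the prescribed form has either $i_2=2$ or $i_{n+1}=2$, factors off $s_1$ on the appropriate side, and applies the inductive hypothesis in the parabolic subgroup $\langle s_2,\ldots,s_n\rangle$. Your counting argument is quite different and more global: you invoke the standard fact that there are $2^{n-1}$ standard Coxeter elements (via orientations of the $A_n$ Dynkin diagram, equivalently relative orders of adjacent simple reflections), match this against the $2^{n-1}$ subsets of $\{2,\ldots,n\}$ parametrizing cycles of the prescribed form, and conclude by injectivity. This is slick but imports the enumeration of Coxeter elements as a black box; the paper's inductive construction is more elementary and actually produces a reduced word for the given cycle, which is useful elsewhere in the section.
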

\begin{proof}
Let $c'$ be a standard Coxeter element, and write $\sigma_{c'}$ for the permutation corresponding to $c'$. Since $\sigma_{c'}(1)=i_2$, $s_j$ must occur before $s_{j-1}$ in any product of the $s_i$ giving $c'$ for all $j\in \{2, 3, \dots, i_2-1\}$. Moreover, if $k\neq 2$, $s_{i_2}$ cannot be before $s_{i_2-1}$ because it would contradict $\sigma_{c'}(1)=i_2$. Hence $k\neq 2$ implies that $s_{i_2}$ is after $s_{i_2-1}$; but these two reflections are the only elements of $S$ that don't fix $i_2$. This implies that $i_3=\sigma_{c'}(i_2)>i_2$. Iterating this process gives $1=i_1<i_2<\dots <i_k$. A similar argument gives the second sequence of inequalities. 

For the converse, we argue by induction on $n$. If $n=2$, then such a cycle is either equal to $(1,2,3)$ or $(1,3,2)$. The first one is $s_1 s_2$ and the second one $s_2 s_1$. Now let $c'=(i_1,\dots,i_{n+1})$. If the inequalities of the lemma are true, then either $i_2=2$ or $i_{n+1}=2$. If $i_2=2$ then $s_1 c=(i_2,\dots,i_{n+1})$ and the result follows by applying the induction hypothesis to the $n$-cycle $(i_2,\dots,i_{n+1})$ in the parabolic subgroup $W_I$ where $I=\{2, \dots, n+1\}$. If $i_{n+1}=2$ then $c s_1=(i_2,\dots,i_{n+1})=(i_{n+1},i_2,\dots,i_{n})$ and we can apply the induction hypothesis to the $n$-cycle $(i_{n+1},i_2,\dots, i_{n})$ in the parabolic subgroup $W_I$.

\end{proof}

\begin{remark}\label{rmq:successifs}
It follows from Lemma~\ref{lemme1} that for any $k\in\{1,\dots, n+1\}$, there is a line $L_k$ passing through the point $k$ such that the set of points on the circle with label in $E_k:=\{i\in[n+1]~|~i\leq k\}$ lies in one of the two half-planes defined by $L_k$ while the set of points with label in ${E'}_k:=\{i\in[n+1]~|~i\geq k\}$ lies in the other half-plane. An example is given in Figure \ref{figure:orientation}.
\end{remark}
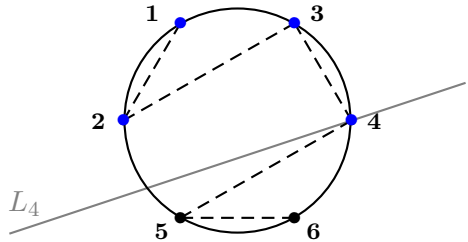
\begin{figure}[htbp]
\begin{center}
\begin{tabular}{ccc}
& \begin{pspicture}(0,0)(6,3)

\psline[linecolor=gray](0,0)(6,2)
\pscircle(3,1.5){1.5}
\psdots(4.5,1.5)(3.75,2.79)(3.75,.21)(2.25,.21)(2.25,2.79)(1.5,1.5)
\psline[linestyle=dashed](2.25,2.79)(1.5,1.5)
\psline[linestyle=dashed](1.5,1.5)(3.75,2.79)
\psline[linestyle=dashed](3.75,2.79)(4.5,1.5)
\psline[linestyle=dashed](4.5,1.5)(2.25,.21)
\psline[linestyle=dashed](2.25,.21)(3.75,.21)
\rput(3.75,.21){$\bullet$}
\rput(4.5,1.5){{\blue $\bullet$}}
\rput(3.75,2.79){{\blue $\bullet$}}
\rput(2.25,.21){$\bullet$}
\rput(2.25,2.79){{\blue $\bullet$}}
\rput(1.5,1.5){{\blue $\bullet$}}
\rput(4.05,2.92){\textrm{{\footnotesize \textbf{3}}}}
\rput(4.8,1.5){\textrm{{\footnotesize \textbf{4}}}}
\rput(4,.03){\textrm{{\footnotesize \textbf{6}}}}
\rput(2,.03){\textrm{{\footnotesize \textbf{5}}}}
\rput(1.17,1.5){\textrm{{\footnotesize \textbf{2}}}}
\rput(1.88,2.92){\textrm{{\footnotesize \textbf{1}}}}
\rput(0.2,.4){{\gray $L_4$}}
\end{pspicture} & \\
\end{tabular}
\end{center}
\caption{Example of a labeling given by the Coxeter element $c'=s_2 s_1 s_3 s_5 s_4=(1,3,4,6,5,2)$. When going from the point $1$ to the point $6$ one obtains a noncrossing zigzag.}
\label{figure:orientation}
\end{figure}

\begin{notation}
Let $c'$ be a Coxeter element. We set $R_{c'}:=\{i_1, i_2,\dots, i_k\}$ and $L_{c'}:=\{i_k, i_{k+1}, \dots, i_{n}, i_{n+1}, i_1\}$, where the $i_j$'s are given by Lemma \ref{lemme1}. In particular, $L_{c'}\cup R_{c'}=[n+1]$ and $L_{c'}\cap R_{c'}=\{1,n+1\}$.
\end{notation}

\subsubsection{Standard forms for cycles.}


Our goal is to define standard forms for noncrossing partitions associated to an arbitrary Coxeter element $c'$.  In this section, we first define the standard form for an individual cycle with respect to $c'$.

Let $x\in\NC(\W,c)$ and recall that we use the shorthand $x_i$ for the $i$\ts{th} entry of $v_x$, which counts the number of copies of $s_i$ in the standard form $m_{x}$.
\begin{lemma}[Standard forms for cycles]\label{lem:singleblock}
Let $x'\in \NC(\W, c')$ be a cycle and let $x$ be the unique cycle in $\NC(\W, c)$ with the same support as $x'$. There exists an $\mathcal{S}$-reduced expression $m_{x'}^{c'}$ for $x'$ such that:
\begin{itemize}
\item For each $1\leq i\leq n$, the number of copies of $s_i$ in $m_{x'}^{c'}$ is the same as in $m_x$,
\item If $\mathrm{supp}(x')=\{d_1,d_2,\dots, d_k\}$ where $d_i<d_{i+1}$ for $1\leq i<k$, then $m_{x'}^{c'}$ is a product of the syllables $\mathbf{s_{[d_i,d_{i+1}]}}$ in some order.
\end{itemize} 
\end{lemma}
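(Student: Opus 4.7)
The plan is to work inside the reflection subgroup $W_J\subseteq\W$ generated by the transpositions $\tau_i:=(d_i,d_{i+1})$ for $1\leq i\leq k-1$, where $J:=\mathrm{supp}(x')=\{d_1<\cdots<d_k\}$. This subgroup is isomorphic to $\mathfrak{S}_k$ and carries a natural Coxeter structure of type $A_{k-1}$ in which the $\tau_i$ are the simple reflections. The central claim will be that $x'$ is itself a (standard) Coxeter element of $W_J$ for this structure; once this is established, by definition $x'$ factors as $\tau_{\pi(1)}\tau_{\pi(2)}\cdots\tau_{\pi(k-1)}$ for some permutation $\pi$ of $\{1,\dots,k-1\}$, and expanding each $\tau_i$ into the syllable $\mathbf{s_{[d_i,d_{i+1}]}}$ yields a candidate $m_{x'}^{c'}$ assembled from exactly the prescribed collection of syllables.

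To prove the central claim, I will apply Lemma~\ref{lemme1} to $W_J\cong\mathfrak{S}_k$ after relabeling $d_i\mapsto i$. Since $x'\leqt c'$ is a single $k$-cycle, the cyclic order in which $x'$ permutes the elements of $J$ coincides with the restriction to $J$ of the cyclic order induced by $c'$ around the circle. By Remark~\ref{rmq:successifs}, the cyclic order induced by $c'$ is obtained by traversing $R_{c'}$ in increasing order and then $L_{c'}$ in decreasing order; restricting to $J$ yields an increasing initial segment followed by a decreasing final one, which is exactly the zigzag hypothesis of Lemma~\ref{lemme1}. Hence $x'$ meets the criterion to be a standard Coxeter element of $W_J$. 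This identification between the cyclic order of the cycle $x'$ and the restriction of $c'$'s cyclic order is the principal conceptual step.

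With the factorization in hand, it remains only to show that $m_{x'}^{c'}$ is $\mathcal{S}$-reduced, since the first bullet point then follows automatically: $m_{x'}^{c'}$ and $m_x$ use the same collection of syllables and hence produce identical counts of each $s_i$. The length of $m_{x'}^{c'}$ equals $\sum_{i=1}^{k-1}(2(d_{i+1}-d_i)-1)=2(d_k-d_1)-(k-1)$, and I will match this with $\ell_{\mathcal{S}}(x')$ by a direct inversion count: pairs internal to $J$ contribute $k-1$ inversions (the $\mathcal{S}$-length of $x'$ inside $W_J$, where $x'$ is a Coxeter element), while each of the $d_k-d_1-(k-1)$ points of $[n+1]\setminus J$ lying strictly between $d_1$ and $d_k$ contributes exactly two external inversions. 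The last tally relies on the observation that for a Coxeter element of $W_J$, exactly one cycle-edge of $x'$ crosses between $\{d_1,\dots,d_i\}$ and $\{d_{i+1},\dots,d_k\}$ in each direction for every $i$; this is immediate from the zigzag description and is routine but combinatorially fiddly.
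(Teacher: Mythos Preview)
Your argument is correct, and it follows a genuinely different route from the paper's proof. The paper proceeds by induction on $\ell_{\mathcal{T}}(x')$: it observes that the two smallest elements of $J$ are adjacent in the cyclic order of $x'$, peels off the corresponding transposition $(d_1,d_2)$ on the left or right, applies the induction hypothesis to the remaining $(k-1)$-cycle supported on $J\setminus\{d_1\}$, and concatenates. Reducedness is immediate in that setup because the simple reflections in the syllable $\mathbf{s_{[d_1,d_2]}}$ (namely $s_{d_1},\dots,s_{d_2-1}$) are disjoint from those appearing in the inductively obtained word (which only involves $s_{d_2},s_{d_2+1},\dots$).

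Your approach is more structural: you identify $x'$ globally as a standard Coxeter element of the reflection subgroup $W_J\cong\mathfrak{S}_k$, invoking Lemma~\ref{lemme1} and the zigzag description of $c'$ to verify the unimodality of the restricted cyclic order. This yields the factorization into the syllables $\tau_i$ all at once, and explains conceptually why such a factorization exists. The price is that reducedness no longer comes for free, so you supply a direct inversion count---which is clean and correct (the key point that each fixed $p$ with $d_1<p<d_m$ contributes exactly two external inversions follows from the unimodality: the cycle crosses the level $p$ exactly once going up and once coming down). Both proofs ultimately hinge on the same structural fact about the cyclic order of $x'$, but yours isolates it as the statement ``$x'$ is a Coxeter element of $W_J$'', which is a pleasant reformulation.
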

\begin{proof}
We argue by induction on $\ell_{\mathcal{T}}(x')=|\mathrm{supp}(x')|-1$.  If $\ell_{\mathcal{T}}(x')=1$, then $x'\in \mathcal{T}\subset \NC(\W,c)\cap\NC(\W, c'),$ so that $x'=x$ and we can set $m_{x'}^{c'}=m_x$; we know that the standard form of an element in $\NC(\W, c)$ has the specified properties. 

We may now assume that $x'=(i_1, i_2,\dots, i_k)$ with $k>2$. Using the labeling of the circle arising from the cycle corresponding to $c'$ (as in Subsection \ref{sub:orientation}), we can assume that $i_1$ and $i_2$ are the two smallest indices in $\{i_1,i_2,\dots,i_k\}$. We can rewrite $x'$ as the product $(i_1, i_2)(i_2, i_3,\dots, i_k)$ in case $i_2>i_1$ or $(i_1, i_3,\dots, i_k)(i_1, i_2)$ in case $i_2<i_1$. Then $y'=(i_2, i_3,\dots, i_k)$ (or $y'=(i_1,i_3,\dots, i_k)$) lies again in $\NC(\W, c')$. By induction, there is an $\mathcal{S}$-reduced expression $m_{y'}^{c'}$ of $y'$ in which the number of copies of $s_i$'s is the same as in $m_y$ for $y$ the unique cycle in $\NC(\W,c)$ such that $\mathrm{supp}(y')=\mathrm{supp}(y)$. But the elements of $\mathcal{S}$ occuring in any reduced expression of $(i_1, i_2)$ are distinct from those occuring in $m_{y'}^{c'}$ and have smaller indices. As a consequence, the Coxeter word $\mathbf{s_{[i_1,i_2]}}\star m_y$ if $i_1<i_2$ or $\mathbf{s_{[i_2,i_1]}}\star m_y$ if $i_2<i_1$ is a standard form for the unique cycle $x\in \NC(\W,c)$ with $\mathrm{supp}(x)=\mathrm{supp}(x')$. Then $m_{x' }^{c'}=\mathbf{s_{[i_1,i_2]}}\star m_{y'}^{c'}$ if $i_1<i_2$, or $m_{x'}^{c'}=m_{y'}^{c'} \star\mathbf{s_{[i_2, i_1]}}$ otherwise, is a word with the desired properties.
\end{proof}
\begin{remark}\label{rmq:stdunique} For $c'=c$ we recover the standard form of a cycle from Section \ref{sec:nc_algebraic}, i.e.\, $m_x^c=m_x$ for any $x\in\NC(\W,c)$. For arbitrary $c'$, the word $m_{x'}^{c'}$ is not unique in general, since the induction of the proof allows for adjacent syllables $\mathbf{s_{[d_i, d_{i+1}]}}\star\mathbf{s_{[d_j, d_{j+1}]}}$ with $j>i+1$ (or $j+1<i$) to be placed at the end of $m_{x'}^{c'}$---and these syllables commute as elements of $\W$.  Regardless, the \textit{relative} positioning of the noncommuting syllables $\mathbf{s_{[d_i, d_{i+1}]}}$ and $\mathbf{s_{[d_{i+1},d_{i+2}]}}$ is always uniquely determined in $m_{x'}^{c'}$.
\end{remark}
\begin{definition}
A word $m_{x'}^{c'}$, as in Lemma \ref{lem:singleblock}, is a~\defn{standard form} of $x'$.
\end{definition}
\begin{example}
In type $A_4$ let $c'=s_4 s_2 s_1 s_3=(1,3,5,4,2)$ and $x'=(1,3,5,2)\in\NC(\W,c')$.  The corresponding element of $\NC(\W,c)$ is $x=(1,2,3,5)$, and $m_x=s_1 s_2 (s_4 s_3 s_4)$.  By Lemma \ref{lem:singleblock}, we compute that $m_{x'}^{c'}=s_2 (s_4 s_3 s_4) s_1$, which is just a reordering of the syllables of $m_x$.  The vector in $\sf{V}_4$ corresponding to $x'$ is therefore $(1,1,1,2)$.
\end{example}

  
\begin{definition}
If $x\in\NC(\W, c')$ and $\{d_1<\dots<d_k\}$ is the set of integers indexing the vertices of $P\in\Pol(x)$, we say that $d_1$ is an \defn{initial} index and $d_k$ a \defn{terminal} one. As for $c$, we set $D_x^{c'}$ (resp. $U_x^{c'}$) to be the set of labels of non-terminal (resp. non-initial) vertices of polygons of $x'$. 
\end{definition}

\subsubsection{Standard forms for noncrossing partitions.}

In this section, we will define a standard form $m_{x'}^{c'}$ for any $x'\in\NC(\W, c')$ by establishing a bijection $\phi_{c',c}:\NC(\W, c')\rightarrow\NC(\W, c)$ with the property that $m_{x'}^{c'}$ has the same number of $s_i$'s as $m_{\phi_{c',c}}$.  This bijection will extend the bijection given in Lemma \ref{lem:singleblock} for individual cycles.  In particular, since $\phi_{c',c}$ will fix the reflections $\T$, we note that $\phi_{c',c}$ is not the standard bijection from $\NC(\W,c')$ to $\NC(\W,c)$ obtained by conjugating.

The most naive definition of such a bijection $\phi_{c',c}$ would be to decompose $x'$ into a product of disjoint cycles $c_1'c_2'\cdots c_k'$, and then to concatenate the standard forms given by Lemma \ref{lem:singleblock} for each cycle $c_i'$.  The difficulty is that the element $x=\phi_{c',c}(x')\in\NC(\W,c)$ cannot in general be given by the product $c_1c_2\cdots c_k$ where $c_i$ is the element of Lemma \ref{lem:singleblock} corresponding to $c_i'$, since such a bijection would fix every element that is a product of commuting transpositions---and such elements do not necessarily lie in $\NC(\W, c)$.  The following example illustrates the failure of this naive approach.
\begin{example}
\label{ex:mot2}
To continue Example~\ref{ex:mot1}, observe that $\phi_{c',c}$ cannot fix the element $x'=s_2s_1s_3s_2 = (s_2s_1s_2)(s_3s_2s_3)\in\NC(\W, s_2s_1s_3)$, since $x' \not \in \NC(\W,c)$. Since $x'$ is the product of two commuting reflections, this shows that we cannot simply concatenate the standard forms given by Lemma \ref{lem:singleblock}.  From Example~\ref{ex:mot1} and Figure~\ref{figure:a3}, we would like $\phi_{c',c}(x')$ to be the element $s_2 s_3s_2s_1s_2s_3$.  Then we note that both $x$ and $x'$ have the same support, and that $(D_x, U_x)=(D_{x'}^{c'}, U_{x'}^{c'})$.
\end{example}

The observation in Example~\ref{ex:mot2} provides the solution in the form of the following generalization of Lemma~\ref{lem:singleblock}.

\begin{theorem}\label{thm:bijccprime}
There exists a unique bijection \[\phi_{c',c}:\NC(\W,c')\rightarrow\NC(\W,c), ~x'\mapsto x\] such that $(D_x, U_x)=(D_{x'}^{c'}, U_{x'}^{c'})$. Furthermore, $\lt(x')=\lt(x)$, and if $c_1c_2\dots c_j$ is the cycle decomposition of $x'$, then the word for $x'$
\[m_{x'}^{c'}:=m_{c_1}^{c'}\star\cdots m_{c_j}^{c'}\] has the same number of $s_i$'s as $m_x$.
\end{theorem}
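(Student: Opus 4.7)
The plan is to define $\phi_{c',c}$ by passing through the pair $(D,U)$: set $\phi_{c',c}(x') := \epsilon^{-1}(\epsilon'(x'))$, where $\epsilon'(x') := (D_{x'}^{c'}, U_{x'}^{c'})$ and $\epsilon:\NC(\W,c)\to\mathcal{I}$ is the bijection of Proposition~\ref{prop:terminalinitial}. Well-definedness requires showing $\epsilon'(x')\in\mathcal{I}$; the equality $|D_{x'}^{c'}|=|U_{x'}^{c'}|$ is clear, and the inequality $|D\cap[m]|\geq|U\cap[m]|$ reduces to a block-by-block check, since each non-singleton block $\{b_1<\cdots<b_k\}$ of $x'$ contributes $\mathbbm{1}_{b_1\leq m}-\mathbbm{1}_{b_k\leq m}\geq 0$ to the difference. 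Uniqueness of $\phi_{c',c}$ as a bijection with the stated defining property then follows from the injectivity of $\epsilon$.

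For bijectivity of $\phi_{c',c}$, since $|\NC(\W,c')|=|\NC(\W,c)|=|\mathcal{I}|=C_{n+1}$, it suffices to show that $\epsilon'$ is injective (equivalently, surjective onto $\mathcal{I}$). I would argue surjectivity by exhibiting a preimage: given $(D,U)\in\mathcal{I}$, one reads the labels in the $c'$-cyclic arrangement, classifies each vertex as an initial/middle/terminal/singleton according to $(D,U)$, finds a cut-point where the resulting word becomes a Dyck-like word, and applies a stack algorithm to recover the unique $c'$-noncrossing partition with this spec. This part is the main obstacle, and would be proved using the structural properties of $c'$ from Lemma~\ref{lemme1} (the sets $L_{c'}$ and $R_{c'}$ split the circle into two monotone halves, which ensures existence and uniqueness of the cut-point).

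For the length equality, $\lt(y)=n+1-(\text{number of cycles of }y)$ for $y\in\mathfrak{S}_{n+1}$, and the number of cycles of $y$ can be read off from $(D_y, U_y)$: there are $|D_y\setminus U_y|$ non-singleton blocks and $n+1-|D_y\cup U_y|$ singleton blocks. Hence $(D_x,U_x)=(D_{x'}^{c'},U_{x'}^{c'})$ immediately gives $\lt(x')=\lt(x)$.

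The most delicate claim is the $s_i$-count equality. By Lemma~\ref{lem:singleblock}, for each cycle $c_\ell'$ of $x'$ the count of $s_i$ in $m_{c_\ell}^{c'}$ equals the count in $m_{\tilde c_\ell}$, where $\tilde c_\ell$ is the unique $c$-cycle with the same support; summing over blocks yields
\[(v_{x'})_i=\mathbbm{1}_{i\in D_{x'}^{c'}}+2f_{x'}(i),\quad f_y(i):=|\{B\in\Pol(y):\min B<i<\max B,\ i\notin B\}|,\]
and an identical formula holds for $(v_x)_i$, so it remains to show $f_{x'}(i)=f_x(i)$. I would write $f_y(i)=g_y(i)-\mathbbm{1}_{i\in D_y\cap U_y}$, where $g_y(i):=|\{B\in\Pol(y):\min B<i<\max B\}|$, and use that each non-singleton block has its minimum in $D\setminus U$ and maximum in $U\setminus D$ to derive the closed form
\[g_y(i)=|(D_y\setminus U_y)\cap[1,i-1]|-|(U_y\setminus D_y)\cap[1,i]|,\]
which depends only on $(D_y,U_y)$. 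The identity $f_{x'}(i)=f_x(i)$ then follows from $(D_x,U_x)=(D_{x'}^{c'},U_{x'}^{c'})$, completing the argument.
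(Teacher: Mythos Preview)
Your outline is sound and the route is genuinely different from the paper's.  The paper constructs $\phi_{c',c}$ \emph{geometrically}: it draws $x'$ as an arc diagram on the line $1,\ldots,n+1$, resolves each crossing $(i,k),(j,\ell)$ with $i<j<k<\ell$ by replacing it with $(i,\ell),(j,k)$, and reads off the resulting $c$-noncrossing diagram as $x$ (Proposition~\ref{lem:part1bijcc}).  Injectivity of $x'\mapsto (D_{x'}^{c'},U_{x'}^{c'})$ is then proved separately by a geometric induction on the number of polygons using the half-plane structure of the $c'$-labeling (Proposition~\ref{lem:ensb}), and the $s_i$-count equality is deduced by tracking how syllables change under each crossing resolution (Proposition~\ref{prop:nouvebij}).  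You instead define $\phi_{c',c}$ directly as $\epsilon^{-1}\circ\epsilon'$ and compute the $s_i$-count from a closed formula in $(D,U)$ alone; your formula $f_y(i)=g_y(i)-\mathbbm 1_{i\in D_y\cap U_y}$ with $g_y(i)=|(D_y\setminus U_y)\cap[1,i{-}1]|-|(U_y\setminus D_y)\cap[1,i]|$ is correct and gives a cleaner reason for the $s_i$-count claim than the paper's word-tracking argument.

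Two points to tighten.  First, the ballot inequality $|D\cap[m]|\geq|U\cap[m]|$ as you state it only yields $d_i\leq e_i$; to get the strict $d_i<e_i$ required by $\mathcal I$, evaluate at $m=e_i-1$ and observe that the block containing $e_i$ already contributes $+1$ there (since its minimum is $<e_i$), forcing $|D\cap[e_i{-}1]|\geq i$.  Second, and more seriously, you correctly flag the injectivity (equivalently surjectivity) of $\epsilon'$ as ``the main obstacle,'' but your stack/cut-point sketch is not a proof.  This is exactly the content of Proposition~\ref{lem:ensb} in the paper, whose argument is short and worth absorbing: pick the largest initial index, show it must be joined to the nearest larger terminal on the $c'$-circle, peel off that polygon's longest segment, and induct; the key geometric input is Remark~\ref{rmq:successifs}, which guarantees that the remaining initial/terminal points all lie on one side of that segment, and that any ``middle'' index is exposed to a unique longest segment.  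Once you fill that in, your argument is complete and arguably more direct than the paper's for the final $s_i$-count step.
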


A word $m_{x'}^{c'}$ as in the Theorem is called a \defn{standard form} of $x'$. We break the proof of Theorem~\ref{thm:bijccprime} into three parts: Propositions~\ref{lem:part1bijcc},~\ref{lem:ensb}, and~\ref{prop:nouvebij}.  In Proposition~\ref{lem:part1bijcc}, we define a map $\phi_{c',c}$ that takes an element of $\NC(\W,c')$ as input and prove that it produces an element of $\NC(\W,c)$ as output.  The injectivity of $\phi_{c',c}$ is then shown in Proposition~\ref{lem:ensb}.  Finally, in Proposition~\ref{prop:nouvebij}, we conclude that the standard form of $x'$ and the standard form of $\phi_{c',c}(x')$ have the same number of copies of $s_i$. Notice that if a bijection as in the Theorem exists, it is automatically unique, since the sets $(D_x, U_x)$ characterize the noncrossing partition by Proposition \ref{prop:terminalinitial}.

\begin{proposition}
\label{lem:part1bijcc}
There is a well-defined map \[\phi_{c',c}:\NC(\W,c')\rightarrow\NC(\W,c), ~x'\mapsto x.\]
\end{proposition}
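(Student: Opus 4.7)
To define $\phi_{c',c}$ I rely on Proposition~\ref{prop:terminalinitial}, which gives a bijection $\epsilon : \NC(\W, c) \to \mathcal{I}$, $x \mapsto (D_x, U_x)$. It is therefore equivalent to check that for every $x' \in \NC(\W, c')$ the combinatorially defined pair $(D_{x'}^{c'}, U_{x'}^{c'})$ lies in $\mathcal{I}$; once that is established, I simply take $\phi_{c',c}(x') := \epsilon^{-1}(D_{x'}^{c'}, U_{x'}^{c'})$, and well-definedness is automatic.

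The cardinality and containment axioms for $\mathcal{I}$ are immediate: each block of size at least two in $x'$ contributes the same number of vertices to $D_{x'}^{c'}$ (the non-terminal ones) and $U_{x'}^{c'}$ (the non-initial ones), distinct blocks contribute disjointly, singletons contribute nothing, and $n+1$ cannot be non-terminal in its block (so $D_{x'}^{c'} \subset [n]$, in particular the common cardinality $k$ is at most $n$). The substantive condition is the inequality $d_i < e_i$, where I write $D_{x'}^{c'} = \{d_1 < \cdots < d_k\}$ and $U_{x'}^{c'} = \{e_1 < \cdots < e_k\}$. For this I would prove the following block-by-block identity valid at every threshold $t$:
\[ \left| D_{x'}^{c'} \cap [1,t] \right| - \left| U_{x'}^{c'} \cap [1,t] \right| = \#\bigl\{\text{blocks of } x' \text{ of size} \geq 2 \text{ straddling } t\bigr\}, \]
where ``straddling $t$'' means having at least one vertex $\leq t$ and at least one vertex $> t$. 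This is a routine case analysis inside each block: a straddling block of size $r$ with exactly $j$ vertices $\leq t$ (where $0 < j < r$) contributes $j$ to the left count and $j-1$ to the right count, while blocks lying entirely on one side of $t$ contribute equally to both.

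The key application is at $t = e_i - 1$: since $e_i \in U_{x'}^{c'}$, the block of $x'$ containing $e_i$ has some vertex strictly smaller than $e_i$, so it straddles $e_i - 1$, and the right-hand side of the identity is at least $1$. Combined with $|U_{x'}^{c'} \cap [1, e_i - 1]| = i - 1$, this yields $|D_{x'}^{c'} \cap [1, e_i - 1]| \geq i$, equivalently $d_i \leq e_i - 1 < e_i$, as required. The only real obstacle is the straddling identity, which is pure bookkeeping; in particular, the proof uses only the set-partition structure of $x'$ and not its $c'$-noncrossing character, so no geometric subtlety from the labelling of the circle by $c'$ enters the argument.
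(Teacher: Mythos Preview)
Your argument is correct, and it takes a genuinely different route from the paper. The paper defines $\phi_{c',c}$ algorithmically: it draws the arcs $(d_i,d_{i+1})$ of each block of $x'$ on the line labeled $1,\ldots,n+1$, then repeatedly resolves crossings by replacing a crossing pair $(i,k),(j,\ell)$ with $i<j<k<\ell$ by the noncrossing pair $(i,\ell),(j,k)$, and observes that the process terminates in a noncrossing diagram. You instead bypass the algorithm entirely and invoke Proposition~\ref{prop:terminalinitial}: once you know $(D_{x'}^{c'},U_{x'}^{c'})\in\mathcal{I}$, the element $\epsilon^{-1}(D_{x'}^{c'},U_{x'}^{c'})\in\NC(\W,c)$ exists and is unique, so well-definedness is immediate. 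Since the paper's crossing-resolution visibly preserves the sets of non-terminal and non-initial vertices, the two constructions yield the same map.

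Your approach is cleaner for the bare statement of Proposition~\ref{lem:part1bijcc}, and your straddling identity is a nice threshold argument (and, as you note, uses only the set-partition structure of $x'$, not its $c'$-noncrossing character). The paper's algorithmic description, on the other hand, pays off downstream: in Proposition~\ref{prop:nouvebij} the authors track how the standard word changes at each crossing-resolution step to show that the $s_i$-counts are preserved, and the algorithm gives a concrete picture of how the syllables of $m_{x'}^{c'}$ get reassembled into those of $m_x$. So the trade-off is directness now versus a tool for later.
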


\begin{proof}
We define the map $x'\mapsto x$ by representing $x'$ as a collection of arcs with crossings, and then resolve the crossings.

We fix a line with marked points from $1$ to $n+1$ (even for $c'\neq c$).  For each $P\in\Pol(x')$, we order the set $\{d_1,\dots, d_k\}$ of labels of $P$ so that $d_i<d_{i+1}$ for $i=1,\dots, k-1$, and represent $P$ by successive arcs joining the point on the line with label $d_i$ to the point with label $d_{i+1}$, for $i=1,\dots, k-1$.  Finally, we represent $x'$ as the collection of its polygons.  By Lemma \ref{lem:singleblock}, the element of $\NC(\W, c')$ corresponding to $P$ is given by a product of the reflections $(d_i, d_{i+1})$ in some order, depending on $c'$.

Since the points on the line are labeled from $1$ to $n+1$, for $c'\neq c$ the resulting diagram may have crossings.  We will now supply a algorithm to eliminate these crossings, and prove that it gives a bijection to $\NC(\W,c)$.

If the diagram associated to $x'$ has no crossings, then it is the diagram of an element $x\in\NC(\W,c)$, and the procedure terminates.  If there are at least two arcs $(i, k)$, $(j,\ell)$ which cross each other, say with $i<j<k<\ell$, we replace them by the two noncrossing arcs $(i,\ell)$, $(k, j)$ and repeat.

At each step, the number of crossings decreases by one, so that the algorithm terminates in a diagram with no crossings---which represents an element $x\in\NC(\W,c)$.  Moreover, since the operations only change local configurations of the diagram in small neighborhoods of the crossings, the order in which we resolve the crossings does not affect the resulting diagram. The algorithm given above is therefore a well-defined map $\phi_{c',c}:\NC(\W,c')\rightarrow \NC(\W,c)$, $x'\mapsto x$; Figure \ref{figure:process} illustrates an example. It is clear that the set of vertices; the sets of initial, terminal, non initial, and non terminal vertices; and the absolute length are all preserved by $\phi_{c',c}$.
\end{proof}

\begin{figure}[htbp]
\begin{center}
\begin{tabular}{ccc}
\psscalebox{0.85}{\begin{pspicture}(0,5)(5,7)
\psarc(1,5){1}{0}{180}
\psarc(2.5,5){1.5}{0}{180}
\psarc(3.5,5){1.5}{0}{180}
\rput(0,5){$\bullet$}
\rput(1,5){$\bullet$}
\rput(2,5){$\bullet$}
\rput(3,5){$\bullet$}
\rput(4,5){$\bullet$}
\rput(5,5){$\bullet$}
\rput(0,4.7){\textrm{{\footnotesize 1}}}
\rput(1,4.7){\textrm{{\footnotesize 2}}}
\rput(2,4.7){\textrm{{\footnotesize 3}}}
\rput(3,4.7){\textrm{{\footnotesize 4}}}
\rput(4,4.7){\textrm{{\footnotesize 5}}}
\rput(5,4.7){\textrm{{\footnotesize 6}}}

\end{pspicture}} & ~~$\rightarrow$~~ & \psscalebox{0.85}{\begin{pspicture}(0,5)(5,7)
\psarc(1,5){1}{0}{180}
\psarc(2.5,5){1.5}{0}{180}
\psarc(3.5,5){1.5}{0}{180}
\rput(0,5){$\bullet$}
\rput(1,5){$\bullet$}
\rput(2,5){$\bullet$}
\rput(3,5){$\bullet$}
\rput(4,5){$\bullet$}
\rput(5,5){$\bullet$}
\rput(1.12,5.6){{\red{$\bullet$}}}
\rput(1.7, 5.73){{\red{$\bullet$}}}
\rput(0.9,6.02){{\red{$\bullet$}}}
\rput(1.62, 6.21){{\red{$\bullet$}}}
\rput(2.62,6.2){{\red{$\bullet$}}}
\rput(3.38, 6.2){{\red{$\bullet$}}}
\rput(2.62,6.5){{\red{$\bullet$}}}
\rput(3.38, 6.5){{\red{$\bullet$}}}
\rput(0,4.7){\textrm{{\footnotesize 1}}}
\rput(1,4.7){\textrm{{\footnotesize 2}}}
\rput(2,4.7){\textrm{{\footnotesize 3}}}
\rput(3,4.7){\textrm{{\footnotesize 4}}}
\rput(4,4.7){\textrm{{\footnotesize 5}}}
\rput(5,4.7){\textrm{{\footnotesize 6}}}

\psline[linecolor=red](1.12,5.6)(1.7, 5.73)
\psline[linecolor=red](0.9,6.02)(1.62, 6.21)
\psline[linecolor=red](2.62,6.2)(3.38, 6.2)
\psline[linecolor=red](2.62,6.5)(3.38, 6.5)
\end{pspicture}}\\
& & \\
& & $\downarrow$\\ 
\psscalebox{0.85}{\begin{pspicture}(0,5)(5,7)
\psarc(2.5,3.56){2.88}{30}{150}
\psarc(1.5,5){0.5}{0}{180}
\psarc(3,5){1}{0}{180}
\rput(0,5){$\bullet$}
\rput(1,5){$\bullet$}
\rput(2,5){$\bullet$}
\rput(3,5){$\bullet$}
\rput(4,5){$\bullet$}
\rput(5,5){$\bullet$}
\rput(0,4.7){\textrm{{\footnotesize 1}}}
\rput(1,4.7){\textrm{{\footnotesize 2}}}
\rput(2,4.7){\textrm{{\footnotesize 3}}}
\rput(3,4.7){\textrm{{\footnotesize 4}}}
\rput(4,4.7){\textrm{{\footnotesize 5}}}
\rput(5,4.7){\textrm{{\footnotesize 6}}}
\end{pspicture}} & ~~$\leftarrow$~~ & \psscalebox{0.85}{\begin{pspicture}(0,5)(5,7)
\psarc(1,5){1}{0}{45}
\psarc(1,5){1}{93}{180}
\psarc(2.5,5){1.5}{0}{52}
\psarc(2.5,5){1.5}{88}{127}
\psarc(2.5,5){1.5}{157}{180}
\psarc(3.5,5){1.5}{0}{93}
\psarc(3.5,5){1.5}{127}{180}
\rput(0,5){$\bullet$}
\rput(1,5){$\bullet$}
\rput(2,5){$\bullet$}
\rput(3,5){$\bullet$}
\rput(4,5){$\bullet$}
\rput(5,5){$\bullet$}
\rput(1.12,5.6){{\red{$\bullet$}}}
\rput(1.7, 5.73){{\red{$\bullet$}}}
\rput(0.9,6.02){{\red{$\bullet$}}}
\rput(1.62, 6.21){{\red{$\bullet$}}}
\rput(2.62,6.2){{\red{$\bullet$}}}
\rput(3.38, 6.2){{\red{$\bullet$}}}
\rput(2.62,6.5){{\red{$\bullet$}}}
\rput(3.38, 6.5){{\red{$\bullet$}}}
\rput(0,4.7){\textrm{{\footnotesize 1}}}
\rput(1,4.7){\textrm{{\footnotesize 2}}}
\rput(2,4.7){\textrm{{\footnotesize 3}}}
\rput(3,4.7){\textrm{{\footnotesize 4}}}
\rput(4,4.7){\textrm{{\footnotesize 5}}}
\rput(5,4.7){\textrm{{\footnotesize 6}}}
\psline[linecolor=red](1.12,5.6)(1.7, 5.73)
\psline[linecolor=red](0.9,6.02)(1.62, 6.21)
\psline[linecolor=red](2.62,6.2)(3.38, 6.2)
\psline[linecolor=red](2.62,6.5)(3.38, 6.5)
\end{pspicture}}\\
\end{tabular}
\end{center}
\caption{The bijection $\phi_{c',c}$ for $c'=(1,2,5,6,4,3)$, $x'=(2,5)(1,6,3)$, and $x=\phi_{c',c}(x)=(2,3,5)(1,6)$.}
\label{figure:process}
\end{figure}
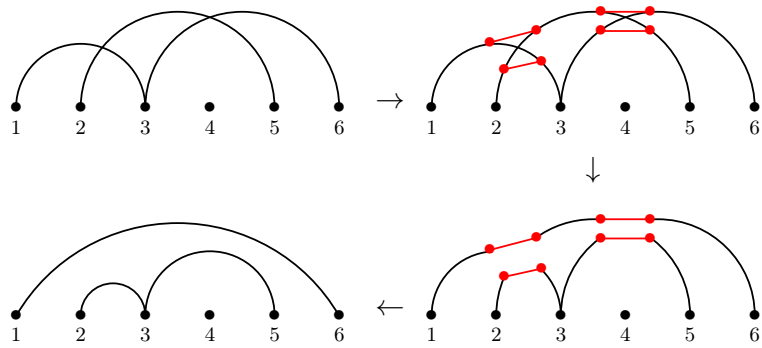

\begin{definition}
Consider the labeling of the circle corresponding to $c'$ and $(m_i, n_i)_i$ a collection of noncrossing segments between points with labels $n_i$ and $m_i$ such that if $i\neq j$, $\{n_i,m_i\}\cap\{n_j, m_j\}=0$. We say that $k\in[n+1]$ is \defn{exposed} to the segment $(m_j, n_j)$ if the segment joining the point labeled with $k$ to the point with label $m_j$ (equivalently $n_j$) does not cross any segment of the collection $(m_i, n_i)_i$. 
\end{definition}
\begin{proposition}\label{lem:ensb}
Let $x,y\in \NC(\W, c')$. If $x\neq y$, then $(D_{x}^{c'}, U_{x}^{c'})\neq(D_{y}^{c'}, U_{y}^{c'}).$ 
\end{proposition}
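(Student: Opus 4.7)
I would proceed by induction on $|D_x^{c'} \cap U_x^{c'}|$, showing that the pair $(D_x^{c'}, U_x^{c'})$ determines $x \in \NC(\W, c')$ uniquely (this is the contrapositive of the desired statement).

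As a preliminary observation, every vertex of every polygon of $x$ appears in $D_x^{c'} \cup U_x^{c'}$, so $\mathrm{supp}(x) = D_x^{c'} \cup U_x^{c'}$. Under the assumption $(D_x^{c'}, U_x^{c'}) = (D_y^{c'}, U_y^{c'})$, the partitions $x$ and $y$ share the same support and the same three sets $I := D^{c'} \setminus U^{c'}$, $T := U^{c'} \setminus D^{c'}$, and $M := D^{c'} \cap U^{c'}$ of initial, terminal, and internal vertices; only the grouping of these vertices into polygons may differ.

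For the base case $M = \emptyset$, each polygon is a reflection, so both $x$ and $y$ correspond to perfect matchings between $I$ and $T$, with each matched pair $\{i,t\}$ satisfying $i<t$ in the usual integer order, and noncrossing on the $c'$-labeled circle. A sub-induction on $|I|$ using the zigzag property of $c'$ from Lemma~\ref{lemme1} and Remark~\ref{rmq:successifs} shows this matching is unique: there is always an $i \in I$ extremal in the $c'$-circular order whose clockwise-adjacent element on the $c'$-circle lies in $T$, any noncrossing matching must pair these two, and removing them reduces to a smaller instance. For the inductive step, I would pick any $r \in M$ and let $x_r, y_r$ denote the partitions obtained by removing $r$ from its polygon in $x$ and $y$ respectively. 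Both lie in $\NC(\W,c')$, and $(D_{x_r}^{c'}, U_{x_r}^{c'}) = (D \setminus \{r\}, U \setminus \{r\}) = (D_{y_r}^{c'}, U_{y_r}^{c'})$, so by induction $x_r = y_r$. It then suffices to verify that $r$ admits at most one reinsertion into $x_r$ as an internal vertex producing an element of $\NC(\W, c')$: namely, into the unique polygon $P$ of $x_r$ whose boundary, along the outer $c'$-circle, contains the label $r$ on one of its bounding arcs, provided the smallest and largest vertices of $P$ bracket $r$ in the integer order.

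The main obstacle lies in establishing this uniqueness of reinsertion. In any noncrossing arrangement of polygons on the $c'$-circle, each arc between two adjacent labels on the outer boundary is bounded by a unique polygon, so the geometric condition isolates a single candidate $P$; the order-theoretic condition that the initial and terminal vertices of $P$ bracket $r$ must then be shown to hold for this $P$, and the zigzag structure of $c'$ provided by Lemma~\ref{lemme1} is exactly what ensures that these two conditions are compatible whenever reinsertion is possible, yielding $x = y$ and contradicting the hypothesis $x \neq y$.
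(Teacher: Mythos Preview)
Your approach is correct and reaches the same conclusion, but it is organized differently from the paper's proof. The paper first determines the family of \emph{longest segments} (the edges joining the minimal to the maximal index of each polygon) uniquely from $(D,U)$, by induction on $|\Pol(x)|$: the largest $i\in I_x$ must be joined to the element of $T_x$ that is larger than $i$ and closest to it on the $c'$-circle, and removing that polygon reduces the count. Only afterwards does it place all of $D\cap U$, using a ``red/blue'' separation argument (Remark~\ref{rmq:successifs}) to show that no index $j\in D\cap U$ can be exposed to two distinct longest segments $(i_k,t_k)$ with $i_k<j<t_k$. You instead induct on $|D\cap U|$, peeling off one internal vertex at a time; your base case is essentially the paper's first step, and your reinsertion step is essentially its second.

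Two places in your write-up would benefit from tightening. In the base case, ``an $i\in I$ extremal in the $c'$-circular order whose clockwise-adjacent element on the $c'$-circle lies in $T$'' is not literally true, since the adjacent label on the full circle may lie outside $I\cup T$; you mean adjacent among the labels in $I\cup T$, and the clean choice (as in the paper) is the \emph{largest} $i\in I$ together with the nearest larger element of $T$ along the circle. In the inductive step, the phrase ``the unique polygon $P$ of $x_r$ whose boundary \dots\ contains the label $r$'' overstates things: the outer region containing $r$ may border several polygons. What you need---and what you gesture at---is precisely the paper's red/blue argument: among polygons whose min and max bracket $r$ in the integer order, at most one can be visible from $r$ without a crossing, because the sets $\{m<r\}$ and $\{m>r\}$ occupy contiguous arcs of the $c'$-circle. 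Filling in that argument explicitly would make your proof complete; as written it is a correct outline with the same geometric core as the paper's, just differently packaged.
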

\begin{proof}
Given $(D:=D_{x}^{c'}, U:=U_{x}^{c'})$, the sets $I_{x}:=D\backslash(D\cap U)$ and $T_{x}:=U\backslash (D\cap U)$ contain the initial and terminal indices respectively. We first argue by induction on $|\Pol(x)|$ that there is a unique bijection $f:I_x\rightarrow T_x$ such that any two segments in $\{(a,f(a))\}_{a\in I_x}$ are noncrossing (when represented on the circle with the labeling of $c'$ from Subsection \ref{sub:orientation}).

To be noncrossing, the largest $i\in I_x$ must be joined to the element $t$ of $U$ which is bigger but as close as possible to $i$ on the circle (this is well defined since if $i\in R_{c'}$ (resp. $i\in L_{c'}$), then any element of $U$ bigger than $i$ is after $i$ (resp. before $i$) on the circle when going along it in clockwise order). Then $(i,t)$ is either a diagonal or an edge of a polygon $P\in\Pol(x)$---we call it the \defn{longest segment} of $P$, since $i$ is the minimal index of $P$ while $t$ is the maximal one.  Now if we divide the plane in two with the line extending the segment $(i,t)$, then all the points labeled with integers in $(I_x\backslash i)\cup(T_x\backslash t)$ lie in the same half-plane thanks to the properties of the orientation from Subsection \ref{sub:orientation}.  Now remove $P$ from $x$ to obtain $x'\in\NC(\W, c')$ for which the property holds by induction. But the collection of segments obtained from $x'$ clearly don't cross $(i, t)$ since they lie in the same half-plane defined by the line extending $(i,t)$. Hence if $x, y\in\NC(\W, c')$ were distinct but $(D_{x}^{c'}, U_{x}^{c'})=(D_y^{c'}, U_y^{c'})$, they would have the same family of longest segments of polygons. 

We must now check that an index $j$ of $D\cap U$ must lie in a single polygon (given here by its longest segment). If not, then $j$ is exposed to (at least) two noncrossing segments $(i_1, t_1)$ and $(i_2, t_2)$ of the family of longest segments such that $i_k<j<t_k$, $k=1,2$. But by Remark \ref{rmq:successifs}, the set $E_{j-1}=\{m\in\{1,\dots,n+1\}~|~m< j\}$ consists of points labeling vertices that are successive on the circle, and the same holds for the set $E'_{j+1}=\{m\in\{1,\dots,n+1\}~|~m> j\}$. Note that $i_k\in E_{j-1}$ and $t_k\in E'_{j+1}$. Call \emph{red points} the points labeled by elements of $E_{j-1}$ and \emph{blue points} the points labeled by ${E'}_{j+1}$. The two segments $(i_1,t_1)$ and $(i_2,t_2)$ join the blue part to the red part. Therefore, any index exposed to the two segments either lies in the blue part or in the red part (see Figure \ref{figure:bleurouge}). This is a contradiction since $j$ is assumed to be exposed to both segments but lies neither in $E_j$ nor in $E'_{j+1}$.
\end{proof}
\begin{figure}
\begin{center}
\psscalebox{0.85}{\begin{pspicture}(-2,-2)(2,2)
\pscircle(0,0){2}
\psdots(0,2)(2,0)(0,-2)(-2,0)(1.414,1.414)(-1.414,-1.414)(1.414,-1.414)(-1.414,1.414)(1.854,0.75)(0.75,1.854)(-1.854,0.75)(-0.75,1.854)(1.854,-0.75)(0.75,-1.854)(-1.854,-0.75)(-0.75,-1.854)
\psline(-0.75,1.854)(-1.854,-0.75)
\psline(2,0)(0,-2)
\rput(0,2){{\red{$\bullet$}}}
\rput(2,0){{\red{$\bullet$}}}
\rput(0,-2){{\blue{$\bullet$}}}
\rput(-2,0){$\bullet$}
\rput(1.414,1.414){{\red{$\bullet$}}}
\rput(1.414,-1.414){$\bullet$}
\rput(-1.414,1.414){$\bullet$}
\rput(-1.414,-1.414){{\blue{$\bullet$}}}
\rput(1.854,0.75){{\red{$\bullet$}}}
\rput(0.75,1.854){{\red{$\bullet$}}}
\rput(-1.854,-0.75){{\blue{$\bullet$}}}
\rput(-0.75,-1.854){{\blue{$\bullet$}}}
\rput(-1.854,0.75){$\bullet$}
\rput(-0.75,1.854){{\red{$\bullet$}}}
\rput(1.854,-0.75){$\bullet$}
\rput(0.75,-1.854){$\bullet$}
\rput(-1,1.95){\textrm{{\footnotesize $i_1$}}}
\rput(1.7,0){\textrm{{\footnotesize $i_2$}}}
\rput(-2.1,-0.75){\textrm{{\footnotesize $t_1$}}}
\rput(0,-1.7){\textrm{{\footnotesize $t_2$}}}
\end{pspicture}}
\end{center}
\caption{Figure for the proof of Proposition~\ref{lem:ensb}.  We fix $i_1,i_2,t_1,t_2$, and the segment between $i_2$ and $i_1$ that doesn't include $t_1$ or $t_2$, and similarly for $t_2$ and $t_1$.  All points between $i_2$ and $i_1$ are colored red, and all points between $t_2$ and $t_1$ are colored blue.  Then $j$ is neither red nor blue, and must therefore label a black point.} 
\label{figure:bleurouge}
\end{figure}
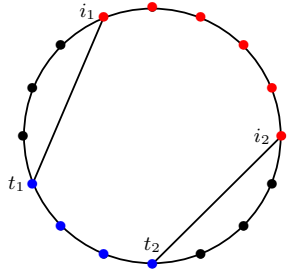
Let $y\in\NC(\W, c')$. We write $y_i^{c'}$ for the number of occurences of $s_i$ in the word $m_{y}^{c'}$.
\begin{proposition}\label{prop:nouvebij}
If $x:=\phi_{c',c}(y)$, then $y_i^{c'}=x_i$ for $i\in[n]$. That is, the number of copies of $s_i$ in $m_{y}^{c'}$ is the same as in $m_x=m_{\phi_{c',c}(y)}$.
\end{proposition}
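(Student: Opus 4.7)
My plan is to reduce the equality $y_i^{c'} = x_i$ to two easily-tracked combinatorial quantities attached to a (not necessarily noncrossing) partition: the indicator $\mathbbm{1}_{i \in D_z}$ and the number of bumps that strictly contain $i$. Both $m_y^{c'}$ (built, via Lemma~\ref{lem:singleblock} and the construction in Theorem~\ref{thm:bijccprime}, from the cycles of $y$) and $m_x$ (as defined in Section~\ref{sec:nc_algebraic}) are concatenations of syllables $\mathbf{s_{[a,b]}} = (s_{b-1} \cdots s_{a+1}) s_a (s_{a+1} \cdots s_{b-1})$, one per bump $(a,b)$ of the underlying polygon decomposition. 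Inspection shows that $s_i$ occurs in such a syllable once when $i = a$, twice when $a < i < b$, and not at all otherwise. Summing over bumps yields the uniform formula
\[
z_i^{\bullet} \;=\; \mathbbm{1}_{i \in D_z^{\bullet}} + 2\,N(i;z), \qquad N(i;z) := \bigl|\{(a,b) : (a,b) \text{ is a bump of } z \text{ and } a < i < b\}\bigr|,
\]
valid for both $(z,\bullet) = (y,c')$ and $(z,\bullet) = (x,c)$.

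By Proposition~\ref{lem:part1bijcc}, $\phi_{c',c}$ preserves the set of non-terminal vertices, so $\mathbbm{1}_{i \in D_y^{c'}} = \mathbbm{1}_{i \in D_x}$, and it therefore suffices to prove $N(i;y) = N(i;x)$ for every $i \in [n]$. Since $\phi_{c',c}$ is defined by iteratively performing a local swap on the linear diagram---replacing a pair of crossing bumps $(p,r), (q,s)$ with $p < q < r < s$ by the noncrossing pair $(p,s), (q,r)$---it is enough to verify that a single such swap preserves $N(i)$, and then to induct on the number of resolutions performed by the algorithm.

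This verification is a finite, elementary case check comparing the joint contribution of $\{(p,r),(q,s)\}$ and of $\{(p,s),(q,r)\}$ to $N(i)$, according to the position of $i$ relative to $p,q,r,s$. In each of the cases $i \leq p$, $p < i \leq q$, $q < i < r$, $r \leq i < s$, $i \geq s$, both pairs contribute the same count of strictly-containing arcs---respectively $0$, $1$, $2$, $1$, $0$---because the swap modifies only whether the endpoints of the two arcs coincide with $i$, not whether $i$ lies strictly in the interior of the interval spanned.

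The only conceptual subtlety I foresee is that the cycles of $y$ need not correspond to cycles of $x$---the bijection $\phi_{c',c}$ can merge or split polygons, as in Example~\ref{ex:mot2}---so one cannot argue cycle by cycle. The plan above circumvents this by working with a finer invariant, the \emph{multiset of bumps}, which decomposes $N(i)$ into per-bump contributions that are preserved by each crossing resolution and thus by $\phi_{c',c}$ as a whole.
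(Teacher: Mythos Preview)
Your proof is correct and follows essentially the same approach as the paper: both track the multiset of syllables through the crossing-resolution algorithm of Proposition~\ref{lem:part1bijcc} and observe that a single swap $\{(p,r),(q,s)\}\mapsto\{(p,s),(q,r)\}$ preserves the count of each $s_i$. The paper phrases this directly at the syllable level---replacing $\mathbf{s_{[p,r]}}$ by $\mathbf{s_{[p,s]}}$ and $\mathbf{s_{[q,s]}}$ by $\mathbf{s_{[q,r]}}$ and noting the letter-multiset is unchanged---whereas you decompose the count as $\mathbbm{1}_{i\in D}+2N(i)$ and do an explicit case check; these are two presentations of the same verification.
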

\begin{proof}
We modify the word $m_{y}^{c'}$ at each step of the algorithm described in the proof of \ref{thm:bijccprime} so that the number of copies of each simple reflection does not change.
 Each step of the algorithm consists of replacing two crossing arcs $(i,k)(j,\ell)$ where $i<j<k<\ell$ by the two noncrossing arcs $(i,\ell)$ and $(j,k)$. Note that the way we represented the situation implies that the reflections $(i,k)$ and $(j,\ell)$ occur as syllables of $m_{y}^{c'}$. It suffices to replace the syllable $\mathbf{s_{[i,k]}}$ in $m_{y}^{c'}$ by $\mathbf{s_{[i,\ell]}}$ and $\mathbf{s_{[j,\ell]}}$ by $\mathbf{s_{[j,k]}}$ (or vice-versa; the order in which the syllables occur in the modified word is irrelevant, since we are interested in the \emph{number} of copies of each simple reflection in the word). This replacement evidently does not change the number.

If we allow the algorithm to run until termination, we obtain a word $m$ which is a product of the syllables of $m_x$ (but not necessarily in the right order, i.e.\ $m$ does not necessarily represent $x$).  Therefore $m$ has the same number of copies of each simple reflection as $m_x$, which concludes the proof.  Note that the order in which we choose to replace crossings does not affect the result, since it is geometrically clear that the arcs obtained at the end are the same for any order and represent exactly the syllables of $m_x$ (this last claim follows from the fact that the number of arcs starting or ending at a given point is constant during the algorithm).
\end{proof}
\subsubsection{Ordering noncrossing partitions for arbitrary Coxeter elements}
For $c'$ a Coxeter element, we write \[\phi_{c'}:\NC(\W, c')\rightarrow \Vn, ~x\mapsto (x_i^{c'})_{i=1}^n.\]

\noindent By Theorem~\ref{thm:bijccprime}, we have that the following diagram commutes.
\[
  \xymatrix{
   \NC(\W,c') \ar[rr]^{\phi_{c',c}} \ar[rrdd]_{\phi_{c'}} & & \NC(\W,c) \ar[dd]^{\phi_c} \\
   & &\\
     & & \Vn
  }
\]

The maps $\phi_{c',c}$ and $\phi_{c'}$ are bijections, but $\phi_c$ is actually an isomorphism of posets by Theorem~\ref{thm:caract}.  We can now induce the order of a distributive lattice on $\NC(\W,c')$, using the componentwise order on $\Vn$.  In general, this will \emph{not} be the same order as the Bruhat order on $\NC(\W,c')$.

\begin{example}
Figure \ref{figure:a3bis} compares the Hasse diagram of the induced order on $\NC(\mathfrak{S}_4,c')$ for $c'=s_2s_1s_3$ with Bruhat order.
\begin{figure}[htbp]
\begin{pspicture}(-2,0)(10,6)
\psline(7.8,0.2)(7.2,0.8)
\psline(7.2,1.2)(7.8,1.8)
\psline(8.8,1.2)(8.2,1.8)
\psline(8.8,2.2)(8.2,2.8)
\psline(8.8,4.2)(8.2,4.8)
\psline(6.8,2.2)(6.2,2.8)
\psline(6.8,3.8)(6.2,3.2)
\psline(9.2,2.2)(9.8,2.8)
\psline(9.8,3.2)(9.2,3.8)
\psline(8,0.3)(8,0.7)
\psline(8,2.3)(8,2.7)
\psline(8,5.3)(8,5.7)
\psline(7,1.3)(7,1.7)
\psline(9,1.3)(9,1.7)
\psline(8.2,0.2)(8.8,0.8)
\psline(7.8,1.2)(7.2,1.8)
\psline(7.8,3.2)(7.2,3.8)
\psline(8.2,1.2)(8.8,1.8)
\psline(8.2,3.2)(8.8,3.8)
\psline(7.2,2.2)(7.8,2.8)
\psline(7.2,4.2)(7.8,4.8)
\psline{->}(4.5,5.5)(5.5,5.5)
\rput(8,0){$e$}
\rput(7,1){$s_1$}
\rput(8,1){$s_2$}
\rput(9,1){$s_3$}
\rput(7,2){$s_2 s_1$}
\rput(8,2){$s_1 s_3$}
\rput(9,2){$s_2 s_3$}
\rput(8,3){$s_2 s_1 s_3$}
\rput(6,3){$s_2 s_1 s_2$}
\rput(10,3){$s_3s_2s_3$}
\rput(7,4){$s_2s_1s_2s_3$}
\rput(9,4){$s_3s_2s_3s_1$}
\rput(8,5){$s_3s_2s_1s_2s_3$}
\rput(8,6){{\red $s_2s_1s_2s_3s_2s_3$}}
\psline(1.8,0.2)(1.2,0.8)
\psline(1.2,1.2)(1.8,1.8)
\psline(2.8,1.2)(2.2,1.8)
\psline(2.8,2.2)(2.2,2.8)
\psline(2.8,4.2)(2.2,4.8)
\psline(0.8,2.2)(0.2,2.8)
\psline(2.8,3.8)(2.2,3.2)
\psline(3.2,2.2)(3.8,2.8)
\psline(3.8,3.2)(3.2,3.8)
\psline(2,0.3)(2,0.7)
\psline(2,2.3)(2,2.7)
\psline(1,1.3)(1,1.7)
\psline(0.8,4.2)(1.65,5.8)
\psline(3.2,4.2)(2.35,5.8)
\psline(2.2,0.2)(2.8,0.8)
\psline(1.8,1.2)(1.2,1.8)
\psline(1.8,3.2)(1.2,3.8)
\psline(2.2,1.2)(2.8,1.8)
\psline(0.2,3.2)(0.8,3.8)
\psline(1.2,2.2)(1.8,2.8)
\psline(1.2,4.2)(1.8,4.8)
\psline(3,1.3)(3,1.7)
\rput(2,0){$e$}
\rput(1,1){$s_1$}
\rput(2,1){$s_2$}
\rput(3,1){$s_3$}
\rput(1,2){$s_2 s_1$}
\rput(2,2){$s_1 s_3$}
\rput(3,2){$s_2 s_3$}
\rput(2,3){$s_2 s_1 s_3$}
\rput(0,3){$s_2 s_1 s_2$}
\rput(4,3){$s_3s_2s_3$}
\rput(1,4){$s_2s_1s_2s_3$}
\rput(3,4){$s_3s_2s_3s_1$}
\rput(2,5){{\red $s_2s_1s_3s_2$}}
\rput(2,6){$s_3s_2s_1s_2s_3$}
\end{pspicture}
\caption{On the left is the Bruhat order restricted to $\NC(\mathfrak{S}_4,c')$ for $c'=s_2 s_1 s_3$.  On the right is the distributive lattice structure on $\NC(\mathfrak{S}_4,c')$ induced by $\sf{V}_3.$  Elements on the right are specified using their standard forms.}
\label{figure:a3bis}
\end{figure}
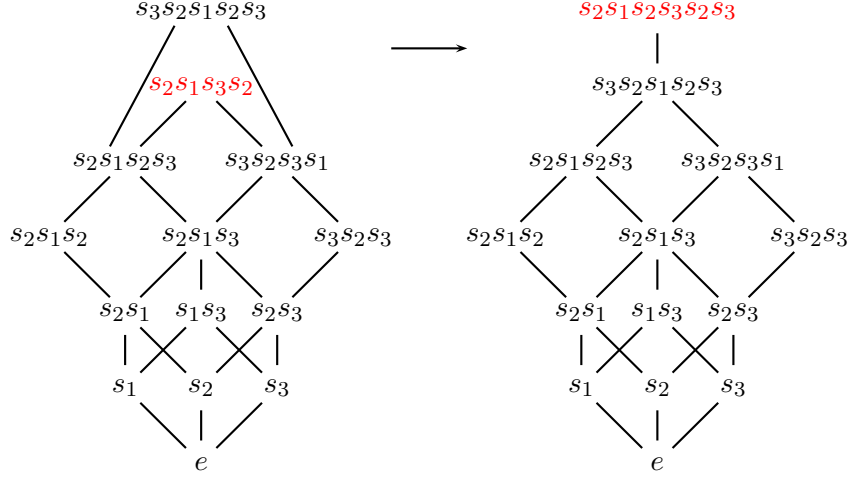
\end{example}
\begin{remark}
By applying Theorem \ref{thm:bijccprime} twice, we obtain bijections $\phi_{c',c''}$ for two arbitrary Coxeter elements $c',c''$.  These evidently generalize the bijection from Proposition \ref{prop:terminalinitial}.
\end{remark}

\subsection{Type $B_n$}
As previously mentioned, the lattice property fails in type $B_n$ (see Figure \ref{figure:b}). We present here analogous bijections to those given in type $A_n$, between nonnesting partitions and noncrossing partitions. The approach given here is the same as in Subsection \ref{sec:nn_diagonal}.

We fix the Weyl group of type $B_n$ as a permutation group on $[n]\cup-[n]$.  We write $((i,j))$ for
the cycle $(i,j)(-i,-j)$.  Then $B_n$ has $n$ simple
reflections~$\mathcal{S}:=\{s_i:=((i,i+1)) : 1 \leq i < n\} \cup \{s_n := (n,-n)\},$ $n$
simple roots $\{\alpha_i:=e_i-e_{i+1} : 1 \leq i < n\} \cup \{a_n:=e_n\}$, $n^2$
reflections~$\mathcal{T}:=\{((i,j)),((i,-j)) :1 \leq i < j \leq n\} \cup \{(-i,i) : 1 \leq i
\leq n\},$ positive roots~$\Phi^+:=\{e_i\pm e_j :1 \leq i < j \leq n\} \cup \{e_i :
1 \leq i \leq n\}$, and we will specialize to the Coxeter element $c:=s_0 s_1 \cdots
s_n = (1,2,\ldots,n,-1,-2,\ldots,-n)$.  Note that $A_{n-1}$ sits inside of $B_n$ as
the parabolic subgroup generated by $s_1,\ldots,s_{n-1}$.

As in type $A_n$, we define a map between nonnesting and noncrossing partitions and
prove that it is a bijection using the Kreweras complement.  We label the positive
roots $\Phi^+$ by $\sl(e_i)=s_i$, $\sl(e_i\pm e_j)=s_i$ for $|i-j|>1$, and
$\sl(e_i+e_{i+1}) = s_i \cdots s_n$. 

\begin{definition}
\label{def:bijB}
For type $B_n$, define $\bijDD: 2^{\Phi^+} \to S^*$ by
\[\bijDD(\nns) = \left( \prod_{\substack{\alpha \in \nns \\
\hgt(\alpha)=1,3,5,\ldots}} s(\alpha) \right) \left( \prod_{\substack{\alpha \in
\nns \\ \hgt(\alpha) = 2,4,6,\ldots}} \sl(\alpha) \right)^{-1},\] where if
$\hgt(\alpha)=\hgt(\beta)$, $\sl(\alpha)=s_i$, $\sl(\beta)=s_j$ or
$\sl(\beta)=s_j\cdots s_n$, and $i<j$, then $\alpha$ comes before $\beta$ in the
product.  Define $\bijD: \NN(B_n) \to B_n$ by evaluating $\bijDD$ restricted to
$\NN(B_n)$ as an element of $B_n$.
\end{definition}

\begin{remark}
By labeling the root poset by $e_i-e_j \mapsto s_i$, $e_i \mapsto s_i$, and $e_i+e_j
\mapsto s_{n-(j-i-1)}$, a similar product map as in Remark~\ref{rem:sort} gives a
bijection between $\NN(B_n)$ and the $c$-sortable elements of type $B_n$, and maps
the number of positive roots in the nonnesting partition to the length of the
corresponding $c$-sortable element. See~\cite{Stump} for a more comprehensive
treatment of this bijection.
\end{remark}


Given a nonnesting partition $\nns$, let $k$ be minimal so that $\alpha_{k+1} \not
\in \nns$. The \defn{initial part} $\Initial(\nns)$ is the nonnesting partition in
either $A_k$ or $B_n$ (if $k=n$) that coincides with $\nns$ restricted to
$\alpha_1,\alpha_2,\ldots,\alpha_k$. The \defn{final part} $\Final(\nns)$ is the
nonnesting partition in $B_{n-k}$ that coincides with $\nns$ restricted to
$\alpha_{k+1},\alpha_{k+2},\ldots,\alpha_{n}$. The following lemma is immediate
from the definition.

\begin{definition}
\label{def:krewmoveb}
Define the action $\KrewNN_c$ on $\nns \in \NN(B_n)$ by replacing each root $e_i-e_j \in
\Initial(\nns)$ by $e_i-e_{j-1}$, each root $e_i \in \Initial(\nns)$ by $e_i-e_n$,
each root $e_i+e_j \in \Initial(\nns)$ for $|i-j|>1$ by $e_i+e_{j+1}$ (where
$e_{n+1}=0$), and each root $e_i+e_{i+1} \in \Initial(\nns)$ by $e_i+e_{i+2}$ and
$e_{i+1}+e_{i+2}$. Each root $e_i-e_j \in \Final(\nns)$ is replaced by
$e_{i-1}-e_j$, each root $e_i \in \Final(\nns)$ by $e_{i-1}$ ($e_n$ also adds an
$e_{n-1}+e_n$), and roots $e_i+e_j$ are replaced by $e_{i-1}+e_{j}$. For each root
$e_i+e_{i+2}$ added, $e_i+e_{i+1}$ is also added, and finally all simple roots in
the support of $\Final(\nns)$ $\alpha_{k+1},\ldots,\alpha_n$ are added.
\end{definition}

It is not hard to check that $\KrewNN_c$ is invertible on $\NN(B_n)$.

\begin{remark}
We note that $|\NN(B_n)|=|\J([n]\times[n])|$. There are several bijections between
these two sets (\cite{Stembridge,Proctor,Reiner}). The Kreweras complement on
$\NN(B_n)$ produces the same orbit structure as the action of $c$ on the parabolic
quotient $A_{2n}^J$ for $J=\{s_n\}$, whose elements have inversion sets in natural
bijection with the order ideals in $\J([n]\times[n])$. This action of $c$ on the
parabolic quotient has been successfully modeled using toggles in~\cite{RushShi}.
\end{remark}

For $c$ a Coxeter element of type $B_n$, we write $\Pc$ for the set of elements of $B_n$ which are below $c$ in absolute order.
\begin{theorem}
\label{them:typeb}
The map $\bijD$ is a bijection from $\NN(B_n)$ to $\Pc$.
\end{theorem}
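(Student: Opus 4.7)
My plan is to mimic the inductive proof of Proposition~\ref{them:typea} (the type $A$ case), exploiting the fact that parabolic subgroups and the Kreweras complement behave analogously in type $B_n$. The strategy splits into two cases depending on whether the nonnesting partition $\nns$ contains every simple root, and we argue by induction on the rank~$n$.

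First I would handle the easy case: if $\nns$ does not contain every simple root, then the simple roots of $\nns$ are supported in a proper parabolic subgroup. Crucially, every proper standard parabolic subgroup of $B_n$ is of type $A_{k-1} \times B_{n-k}$ for some $k$, so I can factor $\bijD(\nns)$ as a product $\bijD(\Initial(\nns)) \cdot \bijD(\Final(\nns))$ lying in commuting parabolics. The type $A$ factor is handled by Proposition~\ref{them:typea} and the type $B$ factor by the induction hypothesis on $n$; this produces a noncrossing partition with respect to a Coxeter element of the corresponding parabolic, which remains a $c$-noncrossing partition in $B_n$.

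The interesting case is when $\nns$ contains all simple roots. Here the key observation, as in type $A$, is that the map $\bijDD$ of Definition~\ref{def:bijB} is designed so that
\[\bijD \circ \KrewNN_c \;=\; \Krew_c \circ \bijD\]
on such partitions: the combinatorial shifts prescribed in Definition~\ref{def:krewmoveb} (decrementing indices in $\Initial$, incrementing in $\Final$, and the special treatment of $e_i+e_{i+1}$, $e_n$, and the simple roots $\alpha_{k+1},\ldots,\alpha_n$) correspond exactly to left-multiplication by $\bijD(\nns)^{-1} c$ after the syllables have been reshuffled. After verifying this equivariance, I would note that $\KrewNN_c(\nns)$ lies in the first case (it no longer contains all simple roots, since applying $\KrewNN_c$ destroys $\alpha_n$ from $\Initial$ and forces a genuine drop), so induction provides a noncrossing partition $\Krew_c(\bijD(\nns)) = \bijD(\KrewNN_c(\nns))$, from which we recover $\bijD(\nns) = c \cdot \bijD(\KrewNN_c(\nns))^{-1}$ as a parabolic decomposition with respect to $\{s_1\} \cup \{s_2,\ldots,s_n\}$ or its symmetric variant.

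Finally I would close the argument by counting: since $|\NN(B_n)| = |\Pc| = \binom{2n}{n}$, any well-defined injection between these finite sets is automatically a bijection. Injectivity follows from the inductive structure (each case produces distinct outputs and the two cases have disjoint images, as the ``all simple roots'' case is characterized by $\bijD(\nns)$ containing the full Coxeter element's reduced structure on all of $S$).

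The main obstacle I anticipate is the case analysis in the equivariance $\bijD \circ \KrewNN_c = \Krew_c \circ \bijD$: the type $B$ root poset has the two ``special'' roots $e_i+e_{i+1}$ and $e_n$ whose labels under $\sl$ are not single simple reflections (one is a word $s_i \cdots s_n$, the other is $s_n$ which also interacts with $e_n-e_{n-1}$), and the rules in Definition~\ref{def:krewmoveb} have extra clauses to handle them. Verifying that these clauses line up with the algebraic Kreweras complement will require a careful bookkeeping argument, most likely done by checking how a single application of $\KrewNN_c$ affects the prefix/suffix of $\bijDD(\nns)$ one syllable at a time. Once this compatibility is established, the remaining cardinality and induction steps are essentially the same as in the type $A$ proof.
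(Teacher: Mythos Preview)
Your approach is essentially the same as the paper's, but there is a genuine gap in case (2). You claim that a \emph{single} application of $\KrewNN_c$ removes $\alpha_n$ (or at least some simple root), putting $\KrewNN_c(\nns)$ into case (1). This is false in type $B_n$. For instance, in $B_2$ with $\nns=\Phi^+=\{e_1-e_2,\,e_2,\,e_1,\,e_1+e_2\}$, applying the rules of Definition~\ref{def:krewmoveb} (with $e_{n+1}=0$) sends $e_1+e_2\mapsto\{e_1,e_2\}$, so $\alpha_2=e_2$ reappears and $\KrewNN_c(\nns)=\{e_1-e_2,e_2,e_1\}$ still contains every simple root. More generally, whenever $e_{n-1}+e_n\in\nns$ the rule $e_i+e_{i+1}\mapsto\{e_i+e_{i+2},\,e_{i+1}+e_{i+2}\}$ regenerates $e_n$, and roots $e_i-e_{i+2}$ regenerate $\alpha_i$ for $i<n$.

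The paper's proof handles this by iterating $\KrewNN_c$: one applies $\KrewNN_c$ some $k$ times (with $k\leq n$) until $\alpha_n$ is absent, uses the equivariance at each step to get $\bijD(\nns)=\Krew_c^{-k}\bigl(\bijD(\KrewNN_c^k(\nns))\bigr)$, and then invokes case (1). So your fix is simply to replace ``one application'' by ``at most $n$ applications'' and to argue briefly why this bound holds. Also note that the type $B$ statement does not assert $|\nns|=\ls(\bijD(\nns))$, so your parabolic-decomposition remark about reducedness (copied from the type $A$ proof) is neither needed nor obviously correct here; the paper's argument uses only that $\Krew_c$ is invertible on $\Pc$ and the cardinality count $|\NN(B_n)|=|\Pc|=\binom{2n}{n}$.
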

\begin{proof}
The proof is essentially the same as in type $A_n$ (Theorem \ref{them:typea}).

\begin{enumerate}
        \item If the nonnesting partition $\nns$ does not contain every simple root, as
parabolic subgroups of type $B$ are of type $A$ or of type $B$, we conclude the
statement by induction on rank.

        \item Otherwise, we have a nonnesting partition $\nns$ containing every simple
root. It is immediate from Definition~\ref{def:krewmoveb} that $\bijD$ is equivariant
with respect to $\KrewNN_c$ on such a nonnesting partition, since $\KrewNN_c$
removes one element from every diagonal until reaching a root $e_i+e_{i+1}$ (which
then contains the desired remaining simple reflections): \[\bijD(\KrewNN_c(\nns)) =
\bijD(\nns)^{-1} c=\Krew_c(\bijD(\nns)).\]  Then we can apply $\KrewNN_c$ until we
obtain a nonnesting partition that does not contain the simple root $\alpha_n$. 
Let the number of applications required be $k$---by Definition~\ref{def:krewmoveb},
$k\leq n$. Since $\Krew_c$ is invertible on $\Pc$, we conclude by induction
on rank that
\[\bijD(\nns)=\Krew^{-k}_c(\Krew^k_c(\bijD(\nns)))=\Krew^{-k}_c(\bijD(\KrewNN^k_c(\nns)))\]
is a bijection.
\end{enumerate}

Since $|\NN(B_n)|=|\Pc|=\binom{2n}{n}$, we conclude the result.
\end{proof}

\begin{corollary}
        Let $\nns \in \NN(B_n)$. The rank of $\bijDD(\nns)$ in absolute order is equal to 
        \[\lt(\nns):=|\{\alpha \in \nns : \hgt(\alpha) \text{ odd} \text{ and }  \alpha
\neq e_{n-1-2i}+e_{n-2i}\}| - |\{\alpha \in \nns : \hgt(\alpha) \text{ even}\}|.\] 
In particular, there are $\binom{n}{k}^2$ elements of $\NN(B_n)$ with
$\lt(\nns)=k$.
\end{corollary}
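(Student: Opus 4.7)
My plan is to establish the rank formula by induction on $|\nns|$, following the two-case structure of the proof of Theorem~\ref{them:typeb}, and then to deduce the count $\binom{n}{k}^2$ by a bijective or generating-function argument. The base case $\nns = \emptyset$ is immediate, as both sides vanish. Throughout, I abbreviate the right-hand side as $O(\nns) - E(\nns)$, where $O$ and $E$ denote the odd (non-excluded) and even-height counts respectively.

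For the inductive step, in the first case, where $\nns$ is missing some simple root $\alpha_{k+1}$ (with $k<n$ minimal), the decomposition $\nns = \Initial(\nns) \sqcup \Final(\nns)$ gives a parabolic factorization of $\bijD(\nns)$ into a type-$A_k$ element (from $\Initial(\nns)$) and a type-$B_{n-k}$ element (from $\Final(\nns)$). Absolute length and the right-hand side of the formula are both additive across this parabolic decomposition, so the claim reduces to applying the type-$A$ corollary after Proposition~\ref{prop:nn_to_nc_vert} to $\Initial(\nns)$ and the induction hypothesis to $\Final(\nns)$. The main thing to check is that heights and the excluded-root status are compatible with the restriction to each parabolic subsystem, which is straightforward.

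In the second case, where $\nns$ contains every simple root, I would use the equivariance $\bijD(\KrewNN_c(\nns)) = \Krew_c(\bijD(\nns))$ established in Theorem~\ref{them:typeb}, together with the fact that $\Krew_c$ is a rank-reversing anti-isomorphism of $[e,c]_{\T}$, so that $\lt(\bijD(\nns)) + \lt(\bijD(\KrewNN_c(\nns))) = n$. It then suffices to verify that the right-hand side satisfies the analogous complementarity
\[(O(\nns) - E(\nns)) + (O(\KrewNN_c(\nns)) - E(\KrewNN_c(\nns))) = n,\]
which, by Definition~\ref{def:krewmoveb}, is a direct bookkeeping exercise tracking how each root shifts under $\KrewNN_c$. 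Since at most $n$ applications of $\KrewNN_c$ eventually return us to the first case, this closes the induction. Once the rank formula is in hand, the count $\binom{n}{k}^2$ follows by exhibiting an explicit bijection between $\{\nns \in \NN(B_n) : O(\nns) - E(\nns) = k\}$ and $\binom{[n]}{k}^2$, for instance by reading the profile of the order ideal off as two $k$-subsets inscribed in an $n \times n$ rectangle, consistent with $|\NN(B_n)| = \binom{2n}{n}$ from Theorem~\ref{them:typeb} via the Vandermonde identity.

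The main obstacle I anticipate is the parity bookkeeping required to verify the complementary identity in the second case: $\KrewNN_c$ acts in subtly different ways on roots of the form $e_i + e_{i+1}$, which may shift labels, split into several roots, or migrate between the excluded and non-excluded categories, and the net change in $O - E$ must exactly match the drop in $\lt$ across $\Krew_c$. The precise choice of the excluded set $\{e_{n-1-2i}+e_{n-2i}\}$ in the formula is forced by this parity-tracking argument, and confirming its correctness uniformly across all parities of $n$ and all configurations of $\nns$ is the delicate step.
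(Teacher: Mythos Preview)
The paper states this corollary without proof. The enumeration is immediate once the rank formula holds: $\bijD$ is a bijection onto $\Pc$ by Theorem~\ref{them:typeb}, and there are $\binom{n}{k}^2$ type-$B_n$ noncrossing partitions of absolute rank $k$ (the type-$B$ Narayana numbers). Your separate bijection to $\binom{[n]}{k}^2$ is therefore superfluous.

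For the rank formula, your outline has a structural flaw: you announce induction on $|\nns|$, but in case~2 you pass to $\KrewNN_c(\nns)$, which by Definition~\ref{def:krewmoveb} typically \emph{adds} roots (all simple roots of the final part, and possibly further roots of the form $e_i+e_{i+1}$), so $|\nns|$ does not decrease. The induction in Theorem~\ref{them:typeb} is on the rank $n$, and that is the correct variable here as well; with this fix, your two-case scheme becomes coherent. You correctly identify that the crux is then the complementarity $(O-E)(\nns)+(O-E)(\KrewNN_c(\nns))=n$, which you defer; that identity is the entire substance of case~2 and requires working through all sub-cases of Definition~\ref{def:krewmoveb}, so at present this is a plan rather than a proof. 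Note also that the type-$A$ analog (the corollary after Proposition~\ref{prop:nn_to_nc_vert}) is proved in the paper not via Kreweras complementation but by directly reading the cycle structure off the order ideal; a parallel direct reading in type $B$---tracking how the long labels $s_i\cdots s_n$ on the roots $e_i+e_{i+1}$ contribute to reflection length---is presumably what the paper has in mind, and would sidestep the recursive bookkeeping entirely.
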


\end{document}